\newtheorem{assumption}[theorem]{Assumption}
\newcommand{\eps}{\varepsilon}
\newcommand{\PP}{\mathbb{P}}
\newcommand{\QQ}{\mathbb{Q}}
\newcommand{\EE}{\mathbb{E}}
\newcommand{\RR}{\mathbb{R}}
\title{Anomalous Shock Displacement Probabilities for a Perturbed Scalar Conservation Law}
\author{
Josselin Garnier\thanks{Laboratoire de Probabilit\'{e}s et Mod\`{e}les Al\'{e}atoires 
\& Laboratoire Jacques-Louis Lions, Universit\'{e} Paris VII 
(\tt{garnier@math.univ-paris-diderot.fr})} 
\and 
George Papanicolaou\thanks{Mathematics Department, Stanford University 
(\tt{papanicolaou@stanford.edu})}
\and 
Tzu-Wei Yang\thanks{Institute for Computational and Mathematical Engineering (ICME), Stanford University 
(\tt{twyang@stanford.edu})}}
\begin{document}

\maketitle

\begin{abstract}
We consider an one-dimensional conservation law with random space-time forcing and calculate using large 
deviations the exponentially small probabilities of anomalous shock profile displacements. Under suitable 
hypotheses on the spatial support and structure of random forces, we analyze the scaling behavior of the 
rate function, which is the exponential decay rate of the displacement probabilities. For small 
displacements we show that the rate function is bounded above and below by the square of the displacement 
divided by time. For large displacements the corresponding bounds for the rate function are proportional to 
the displacement. We calculate numerically the rate function under different conditions and show that the 
theoretical analysis of scaling behavior is confirmed. We also apply a large-deviation-based importance 
sampling Monte Carlo strategy to estimate the displacement probabilities. We use a biased distribution 
centered on the forcing that gives the most probable transition path for the anomalous shock profile, which 
is the minimizer of the rate function. The numerical simulations indicate that this strategy is much more 
effective and robust than basic Monte Carlo.
\end{abstract}

\begin{keywords}
conservation laws, shock profiles, large deviations, Monte Carlo methods, importance sampling
\end{keywords}

\begin{AMS}
60F10, 35L65, 35L67, 65C05
\end{AMS}

\pagestyle{myheadings}

\thispagestyle{plain}

\section{Introduction}

It is well known that nonlinear waves are not very sensitive to perturbations in initial conditions or 
ambient medium inhomogeneities. This is in contrast to linear waves in random media where even weak
inhomogeneities can affect significantly wave propagation over long times and distances. It is natural, 
therefore to consider perturbations of shock profiles of randomly forced conservation laws as rare events
and use large deviations theory. The purpose of this paper is to calculate probabilities of anomalous shock 
profile displacements for randomly perturbed one-dimensional conservation laws. We analyze the rate function 
that characterizes the exponential decay of displacement probabilities and show that under suitable 
hypotheses on the random forcing they have scaling behavior relative to the size of the displacement and the 
time interval on which it occurs.

The theory of large deviations for conservation laws with random forcing is an extension of the 
Freidlin-Wentzell theory of large deviations \cite{Freidlin1998,Dembo2010} to partial differential 
equations. This has been carried out extensively \cite{Cardon-Weber1999,Cardon-Weber2001,Mariani2010} and we 
use this theory here. We are interested in a more detailed analysis of the exponential probabilities of 
anomalous shock profile displacements, which leads to an analysis of the rate function associated with large 
deviations for this particular class of rare events. We derive upper and lower bounds for the exponential 
decay rate of the small probabilities using suitable test functions for the variational problem involving 
the rate function. This is the main result of this paper. The applications we have in mind come from 
uncertainty quantification \cite{Iaccarino2011} in connection with simplified models of flow and combustion 
in a scramjet. Numerical calculations of the exponential decay rates from variational principles associated 
with the rate function have been carried out before \cite{E2004,Zhou2008}. We carry out such numerical 
calculations here and confirm the scaling behavior of bounds obtained theoretically. We use a gradient 
descent method to do the optimization numerically and we note that its convergence is quite robust even 
though the functional under consideration is not known to be convex. This robustness suggests the 
Monte Carlo simulations with importance sampling using a change of measure based on the minimizer of the 
discrete rate function is likely to effective. Our simulations show that indeed such importance sampling 
Monte Carlo performs much better than the basic Monte Carlo method.

The paper is organized as follows. In Section \ref{sec:formulation} we formulate the one-dimensional 
conservation law problem. In Section \ref{sec:LDP} we state the large deviation principle and identify the 
rate function which we will use. In Section \ref{sec:rare event1} we give a simple, explicitly computable 
case of shock profile displacement probabilities that can be used to compare with the results of
the large deviation theory. Section \ref{sec:displacement_general} contains the main results of the paper,
which as the upper and lower bounds for the exponential decay rate of the displacement probabilities, under 
different conditions on the random forcing. We identify scaling behavior of these probabilities, relative to 
size of the displacement and the time interval of interest. In Section \ref{sec:discrete LDP} we introduce a 
discrete form for the conservation law and the associated large deviations, and calculate numerically the 
displacement probabilities from the discrete variational principle. In Section \ref{sec:importance sampling} 
we implement importance sampling Monte Carlo based on the minimizer of the discrete rate function and we 
compare it with the basic Monte Carlo method. We end with a brief section summarizing the paper and our 
conclusions.
\section{The perturbed conservation law}
\label{sec:formulation}

We consider the scalar viscous conservation law:
\begin{eqnarray}
	&&u_t + ( F(u) )_x = (Du_x)_x, \quad \quad x\in \RR, \quad  t \in [0,\infty) , \\
	&&u(t=0,x)=u_0(x), \quad \quad x\in \RR .
\end{eqnarray}
Here $F \in {\cal C}^2(\RR)$ and the initial condition satisfies $u_0(x) \to {u}_\pm$ as $x\to \pm \infty$
where ${u}_- < {u}_+$. We are interested in traveling wave solutions of the form ${U}(x-\gamma t)$, where 
the ${\cal C}^2$ profile ${U}$ satisfies ${U}(x) \to {u}_\pm$ as $x\to \pm \infty$ and the wave speed 
$\gamma$ is given by the Rankine-Hugoniot condition
\begin{equation}
	\label{eq:Rankine-Hugoniot}
	\gamma = \frac{F({u}_+)-F({u}_-)}{{u}_+-{u}_-} .
\end{equation}
A traveling wave ${U}$ with wave speed $\gamma$ exists provided the following conditions are fulfilled:
\begin{eqnarray}
	\label{hyp1a}
	&&F( u) - F({u}_-) > \gamma (u-{u}_-) , \quad \forall u \in (u_+,u_-),\\
	\label{hyp1b}
	&&F'({u}_+) < \gamma < F'({u}_-) .
\end{eqnarray}
The first condition is the Oleinik entropy condition \cite{Ilin1960} and the second one is the Lax entropy 
condition \cite{Lax1957}. Under these conditions the traveling wave profile exists, it is the solution of 
the ordinary differential equation
$$
	U_x =\frac{1}{D} \big( F(U)- F(u_-) - \gamma(U-u_-) \big), \quad \quad 
	U(x) \stackrel{x \to -\infty}{\longrightarrow} u_- ,
$$
and it is orbitally stable, which means that perturbations of the profile decay in time, and thus initial 
conditions near the traveling wave profile converge to it. Note that a physically admissible viscous profile 
must have the stability property; otherwise, it would not be observable. As noted in \cite{Jones1993}, the 
motivating idea behind the orbital stability result is that in the stabilizing process, information is 
transferred from spatial decay of the profile ${U}$ at infinity to temporal decay of the perturbation.

The purpose of this paper is to address another type of stability, that is, the stability with 
respect to external noise. We consider the perturbed scalar viscous conservation law with additive noise:
\begin{eqnarray}
	\label{eq:pert0}
	&&u^\eps_t + ( F(u^\eps) )_x 
	= (Du^\eps_x)_x + \eps \dot{W}(t,x), \quad \quad x\in \RR, \quad  t \in [0,\infty) , \\
	&&u^\eps(t=0,x)={U}(x), \quad \quad x\in \RR .
\end{eqnarray}
Here $\eps$ is a small parameter, $W(t,x)$ is a zero-mean random process (described below), and the dot 
stands for the time derivative. We would like to address the stability of the traveling wave 
${U}(x-\gamma t)$ driven by the noise $\eps \dot{W}$. Motivated by an application modeling combustion in a 
scramjet \cite{Iaccarino2011,West2011}, we have in mind a specific rare event, which is an exceptional or 
anomalous shift of the position of the traveling wave compared to the unperturbed motion with the constant 
velocity $\gamma$.

We consider mild solutions which satisfy (denoting $u^\eps(t)=(u^\eps(t,x))_{x\in \RR}$)
\begin{equation}
	\label{mild:ueps}
	u^\eps(t) = S(t) U - \int_0^t S(t-s) N(u^\eps(s)) ds +  \eps \int_0^t S(t-s) {dW}(s)   ,
\end{equation}
where $S(t)$ is the heat semi-group with kernel
$$
	S(t,x,y) = \frac{1}{\sqrt{2 \pi D t}} \exp \Big( - \frac{(x-y)^2}{2Dt} \Big) ,
$$
and $N(u)(x)= ( F( u(x) ) )_x$. 
The main result about the heat kernel 
\cite[Chap. XVI, Sec. 3]{Dautray1992} is as follows.

\begin{lemma}
	\label{lem:dautray}%
	If $f = (f(t))_{t \in [0,T]} \in L^2([0,T],H^{-1}(\RR))$, then the function $ \int_0^t S(t-s) f(s) ds$ 
	is in $L^2([0,T],H^1(\RR)) \cap {\cal C}( [0,T], L^2(\RR))$.
\end{lemma}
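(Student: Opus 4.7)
My plan is to recognize the function $u(t,\cdot) := \int_0^t S(t-s) f(s)\,ds$ as the (mild) solution of the inhomogeneous heat equation $u_t - D u_{xx} = f$ on $\RR$ with zero initial data, and then to establish the claimed regularity by a standard energy estimate followed by a density argument. I would first assume that $f$ is smooth enough for $u$ to be a classical solution, derive a priori bounds independent of this extra smoothness, and then pass to the general case $f\in L^2([0,T],H^{-1}(\RR))$ by linearity and density.

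The energy step is the following. Taking the duality pairing of the equation with $u$ gives
$$\tfrac{1}{2}\tfrac{d}{dt}\|u\|_{L^2(\RR)}^2 + D\|u_x\|_{L^2(\RR)}^2 = \langle f,u\rangle_{H^{-1},H^1}.$$
Using Cauchy--Schwarz in duality followed by Young's inequality with parameter $D$, one has $|\langle f,u\rangle| \le \tfrac{1}{2D}\|f\|_{H^{-1}}^2 + \tfrac{D}{2}\|u\|_{L^2}^2 + \tfrac{D}{2}\|u_x\|_{L^2}^2$. Absorbing the gradient term on the left and applying Gronwall's lemma yields $u\in L^\infty([0,T],L^2(\RR))$ with an explicit bound in $\|f\|_{L^2([0,T],H^{-1})}$; integrating the resulting differential inequality from $0$ to $T$ then gives $u\in L^2([0,T],H^1(\RR))$.

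To upgrade this to the continuity $u\in\mathcal{C}([0,T],L^2(\RR))$, I would observe that $u_t = Du_{xx}+f$ is the sum of two elements of $L^2([0,T],H^{-1}(\RR))$: the term $Du_{xx}$ because $u\in L^2([0,T],H^1)$, and $f$ by hypothesis. The Lions--Magenes lemma (any $u\in L^2([0,T],H^1)$ with distributional derivative $u_t\in L^2([0,T],H^{-1})$ agrees a.e. with a function in $\mathcal{C}([0,T],L^2)$) then supplies the desired time continuity. Finally, approximating a general $f\in L^2([0,T],H^{-1})$ by smooth $f_n$ produces $u_n$ that are Cauchy in $L^2([0,T],H^1)\cap\mathcal{C}([0,T],L^2)$ thanks to the linear a priori bounds, and the limit coincides with $\int_0^t S(t-s)f(s)\,ds$.

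The main technical obstacle is the continuity statement, which is not purely an energy-estimate matter but relies on the Lions--Magenes interpolation/trace result; the $L^2([0,T],H^1)$ part is essentially a one-line consequence of the energy identity. An alternative would be to work directly on the Fourier side via $\widehat{u}(t,\xi)=\int_0^t e^{-D(t-s)\xi^2}\widehat{f}(s,\xi)\,ds$, controlling $(1+\xi^2)^{1/2}\widehat{u}$ using Young's convolution inequality in $t$ after pulling out a factor of $(1+\xi^2)^{-1/2}$ from $\widehat{f}$, but the energy method above is cleaner and follows the convention of the Dautray--Lions reference cited.
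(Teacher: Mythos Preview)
Your argument is correct. The paper does not actually prove this lemma: it simply cites \cite[Chap.~XVI, Sec.~3]{Dautray1992} and states the result. What you have written---energy identity, Young/Gronwall for the $L^2([0,T],H^1)$ bound, and the Lions--Magenes trace lemma (together with the observation $u_t = Du_{xx}+f\in L^2([0,T],H^{-1})$) for the $\mathcal{C}([0,T],L^2)$ continuity, closed by density---is exactly the variational argument that appears in that reference, so you have effectively reproduced the proof the paper defers to.
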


A white noise or cylindrical Wiener process $B(t,x)$ in the Hilbert space $L^2(\RR)$ is such that for any 
complete orthonormal system $(f_n(x))_{n \geq 1}$ of $L^2(\RR)$, there exists a sequence of independent 
Brownian motions $(\beta_n(t))_{n \geq 1}$ such that 
\begin{equation}
	\label{eq:whitenoise}
	B(t,x)=\sum_{n=1}^\infty \beta_n(t) f_n(x).
\end{equation}
$B(t,x)$ can be seen as the (formal) spatial derivative of the Brownian sheet on $[0,\infty)\times \RR$,
which means  that it is the Gaussian process with mean zero and covariance 
$\EE [ B(t,x) B(s,y)]= s\wedge t\, \delta(x-y)$. Note that the sum (\ref{eq:whitenoise}) does not converge 
in $L^2$ but in any Hilbert space $H$ such that the embedding from $L^2(\RR)$ to $H$ is Hilbert-Schmidt. 
Therefore, the image of the process $B$ by a linear mapping on $L^2(\RR)$ is a well-defined process in the 
Sobolev space $H^k(\RR)$ (for $k=0$, we take the convention $H^0=L^2$) when the mapping is Hilbert-Schmidt 
from $L^2(\RR)$ into $H^k(\RR)$. Then, if the kernel $\Phi$ is Hilbert-Schmidt in the sense that
$$
	\sum_{n=1}^\infty
	\| \Phi f_n \|_{H^k(\RR)}^2 
	= \sum_{j=0}^k \| \partial_{x}^j \Phi(\cdot,\cdot) \|_{L^2(\RR \times \RR)}^2
	< \infty  ,
$$
then the random process $W=\Phi B$,
$$
	W (t,x) = \int \Phi(x,x') B(t,x') dx'
$$
is well-defined in $H^k(\RR)$. It is a  zero-mean  Gaussian  process with covariance function given by
\begin{eqnarray*}
	&&\EE[ W(t,x) W(t',x') ] = t\wedge t' \, C(x,x'),\\
	&&C(x,x')=\Phi \Phi^T(x,x') =
	\int \Phi(x,x'')\Phi(x',x'')   dx'' ,
\end{eqnarray*}
where $\Phi^T$ stands for the adjoint of $\Phi$.

As an example, we may think at $\Phi(x,x')=\Phi_0(x) \Phi_1(x-x')$ with $\Phi_0 \in H^k(\RR)$ and 
$\Phi_1 \in H^k(\RR)$. In this case $\Phi_0$ characterizes the spatial support of the additive noise and
$\Phi_1$ characterizes its local correlation function. In the limit $\Phi_0(x)=1$ and 
$\Phi_1(x-x')=\delta(x-x')$, the process $\dot{W}(t,x)$ in (\ref{eq:pert0}) is a space-time white noise.

By adapting the technique used in \cite[Chap. XVI, Sec. 3]{Dautray1992} we obtain the following lemma.

\begin{lemma}
	\label{lem:HS}
	If $\Phi$ is Hilbert-Schmidt from $L^2$ into $L^2$, then $Z(t) = \int_0^t S(t-s) {dW}(s)$ belongs to 
	$L^2([0,T], H^1(\RR))\cap {\cal C}([0,T], L^2(\RR))$ almost surely. If  $\Phi$ is Hilbert-Schmidt from 
	$L^2$ into $H^1$, then $Z(t)$ belongs to $L^2([0,T], H^2(\RR))\cap {\cal C}([0,T], H^1(\RR))$ almost 
	surely.
\end{lemma}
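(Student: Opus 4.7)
The plan is to adapt the deterministic estimate of Lemma~\ref{lem:dautray} to the stochastic convolution by combining the orthonormal expansion of the cylindrical process with the It\^o isometry and the smoothing property of the heat semigroup. Fix a complete orthonormal basis $(f_n)_{n\ge 1}$ of $L^2(\RR)$, set $g_n=\Phi f_n$, and use $W(t)=\sum_n \beta_n(t)\,g_n$ with independent scalar Brownian motions $\beta_n$ to write
\[
Z(t) \;=\; \sum_{n=1}^\infty \int_0^t S(t-s)\,g_n\,d\beta_n(s).
\]
The Hilbert-Schmidt hypothesis amounts to $\sum_n\|g_n\|_{L^2}^2<\infty$ in the first case, and to $\sum_n\|g_n\|_{H^1}^2<\infty$ in the second.

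I would first record the basic smoothing estimate for the heat semigroup: differentiating $\|S(t)g\|_{L^2}^2$ and integrating by parts gives $\tfrac{d}{dt}\|S(t)g\|_{L^2}^2 = -c D \|\partial_x S(t)g\|_{L^2}^2$ for a numerical constant $c>0$, so
\[
\int_0^T \|S(t)g\|_{H^1(\RR)}^2\,dt \;\le\; C(T,D)\,\|g\|_{L^2(\RR)}^2,
\]
and since $\partial_x$ commutes with $S(t)$, the analogous inequality holds with $H^1,H^2$ in place of $L^2,H^1$. Applying the It\^o isometry term-by-term together with Fubini then yields
\[
\EE\!\int_0^T \!\|Z(t)\|_{H^1}^2\,dt
\;=\;\sum_n \int_0^T\!\!\int_0^t \|S(t-s)g_n\|_{H^1}^2\,ds\,dt
\;\le\; C(T,D)\sum_n \|g_n\|_{L^2}^2 \;<\;\infty,
\]
so $Z\in L^2([0,T],H^1(\RR))$ almost surely. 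The second claim follows by the same computation one derivative higher under the stronger hypothesis.

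The main obstacle is the almost sure $t$-continuity into $L^2(\RR)$ (respectively $H^1(\RR)$), since the stochastic convolution is not a martingale in $t$ and the estimate above only controls a $t$-integral. The standard route is the factorization method of Da~Prato-Zabczyk: writing
\[
\int_0^t S(t-s)\Phi\,dB(s)=\frac{\sin(\pi\alpha)}{\pi}\int_0^t (t-s)^{\alpha-1}S(t-s)\,Y(s)\,ds,\quad Y(s)=\int_0^s (s-r)^{-\alpha}S(s-r)\Phi\,dB(r),
\]
for some $\alpha\in(0,1/2)$, one uses the It\^o isometry and the Hilbert-Schmidt hypothesis to bound $Y$ in $L^p(\Omega;L^p([0,T],L^2(\RR)))$ for large $p$, and then the deterministic convolution against the singular kernel $(t-s)^{\alpha-1}S(t-s)$ transfers this into a continuous $L^2(\RR)$-valued path, in the spirit of Lemma~\ref{lem:dautray}. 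Rerunning the same construction one derivative higher yields the $\mathcal{C}([0,T],H^1(\RR))$ continuity under the stronger Hilbert-Schmidt hypothesis. With this maximal estimate in hand, the Galerkin truncations $Z_N(t)=\sum_{n\le N}\int_0^t S(t-s)g_n\,d\beta_n(s)$ form a Cauchy sequence in $L^2(\Omega;\mathcal{C}([0,T],L^2(\RR)))$, and a subsequence converges a.s.\ uniformly in $t$ to the desired limit $Z$.
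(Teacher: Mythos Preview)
Your argument is correct, but it follows a different route from the paper's. For the $L^2([0,T],H^1)$ bound, the paper takes a spatial Fourier transform, computes the second moment of $\hat{Z}(t,\xi)$ explicitly via the symbol $e^{-D\xi^2 t}$, and uses the elementary inequality $(1-e^{-s})/s\le 1$ together with $\int \hat C(\xi,\xi)\,d\xi = 2\pi\|\Phi\|_{HS(L^2,L^2)}^2$ to obtain a bound on $\EE\|Z(t)\|_{H^1}^2$ that is uniform in $t$ (not merely integrable in $t$). For the time continuity, the paper again works in Fourier, bounds $\EE|\hat Z(t,\xi)-\hat Z(t',\xi)|^2\le |t-t'|\,\hat C(\xi,\xi)$, upgrades this to a fourth-moment estimate in $L^2(\RR)$ using Gaussianity, and invokes Kolmogorov's criterion for Hilbert-valued processes.

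Your approach---basis expansion plus It\^o isometry plus the parabolic smoothing estimate for $S(t)$, followed by the Da~Prato--Zabczyk factorization for continuity---is the standard abstract-semigroup toolkit and would apply verbatim to any analytic semigroup, whereas the paper's argument exploits the explicit Fourier multiplier of the heat kernel and the Gaussianity of $Z$. The trade-off is that the paper's proof is shorter and entirely elementary (no factorization lemma, no $L^p$ estimates for large $p$), and it even yields the slightly stronger conclusion $\sup_{t\in[0,T]}\EE\|Z(t)\|_{H^1}^2<\infty$; your route is more robust and closer to what one would do for a general SPDE.
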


\begin{proof} 
	See Appendix \ref{app:prooflemHS}.
\end{proof}
\section{Large deviation principle}
\label{sec:LDP}

In this section we state a large deviation principle (LDP) for the solution 
$(u^\eps(t,x))_{ t \in [0,T], x\in \RR}$ of the randomly perturbed scalar conservation law (\ref{eq:pert0}). 
It generalizes the classical Freidlin-Wentzell principle for finite-dimensional diffusions. Throughout this 
paper, we assume that the flux $F$ is a $\mathcal{C}^2$ function with bounded first and second derivatives.

\begin{assumption}
	\label{asmp:saturated fluxes}
	In this paper, the flux $F$ is a $\mathcal{C}^2$ function and there exists $C_F<\infty$ such that 
	$\|F'\|_{L^\infty(\RR)}$ and $\|F''\|_{L^\infty(\RR)}$ are bounded by $C_F$.
\end{assumption}

\textbf{Remark.} Assumption \ref{asmp:saturated fluxes} is merely a technical assumption to simplify the 
proofs of Proposition \ref{prop:space of u^eps} and \ref{prop:ldp} and obviously it violates the convexity 
of fluxes in conservation laws. From the physical point of view, for a general flux $F$, we can choose a 
very large constant $M$ and let $F_M(u)=F(u)$ for $|u|\leq M$ and saturate $F_M(u)$ for $|u|>M$. If $[-M,M]$ 
can cover the range of interest of $u$, then $F_M(u)=F(u)$. Without a proof, we point out that this physical 
argument can be proven mathematically: if $u^\eps_t$, $u^\eps_x$ and $u^\eps_{xx}$ in (\ref{eq:pert0}) are 
continuous on $[0,T]\times\RR$ and $\dot{W}$ is bounded on $[0,T]\times\RR$, then the parabolic maximum 
principle implies that $|u^\eps|$ is bounded on $[0,T]\times\RR$ and thus we can find $M<\infty$. The 
technical difficulty is that $W$ is not differentiable in time so we can not apply the parabolic maximum 
principle directly. However, we can consider a modified $\tilde{u}^\eps$ by replacing $W$ by a smooth 
$\tilde{W}$ in (\ref{eq:pert0}) and show that 
$\|\tilde{u}^\eps(t)\|_{L^\infty(\RR)} \to \|u^\eps(t)\|_{L^\infty(\RR)}$ as $\tilde{W}\to W$.

We first describe the functional space to which the solution of the perturbed conservation law belongs.
\begin{proposition}
	\label{prop:space of u^eps}
	If $\Phi$ is Hilbert-Schmidt from $L^2$ to $H^1$, then for any $\eps >0$ there is a unique solution 
	$u^\eps$ to (\ref{mild:ueps}) in  the space ${\cal E}^1$ almost surely, where
	\begin{equation*}
		{\cal E}^1 = \big\{(u(t,x))_{t \in [0,T], x\in \RR}:~ 
		(u(t,x)- U(x))_{t \in [0,T],x\in \RR} \in {\cal C}( [0,T], H^1(\RR) ) \big\}  .
	\end{equation*}
\end{proposition}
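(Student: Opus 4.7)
The plan is to recast the mild formulation (\ref{mild:ueps}) as a fixed-point equation for the deviation $v^\eps(t) := u^\eps(t) - U$ and solve it by Banach contraction in $\mathcal{C}([0,T_*], H^1(\RR))$ on a sufficiently short time window, then iterate to cover $[0,T]$. Subtracting $U$ gives
\begin{equation*}
v^\eps(t) = \bigl(S(t)U - U\bigr) - \int_0^t S(t-s) N\bigl(U + v^\eps(s)\bigr)\, ds + \eps Z(t), \qquad Z(t) = \int_0^t S(t-s)\, dW(s),
\end{equation*}
and the goal becomes to produce a unique $v^\eps \in \mathcal{C}([0,T], H^1(\RR))$ solving this identity almost surely.

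First I would verify that each of the three driving terms sits in $\mathcal{C}([0,T], H^1(\RR))$. The stochastic convolution $\eps Z(t)$ is handled directly by the second half of Lemma \ref{lem:HS}, whose hypothesis is exactly that of the proposition. For the deterministic heat term $S(t)U - U$, the defining ODE for the traveling wave forces $U_x$ and $U_{xx}$ to decay exponentially, so in particular $U_{xx} \in L^2(\RR)$; the identity $S(t)U - U = D\int_0^t S(t-s) U_{xx}\, ds$ together with Lemma \ref{lem:dautray} (in its $L^2$-in / $H^1$-out version) places it in $\mathcal{C}([0,T], H^1(\RR))$. For the nonlinear convolution, Assumption \ref{asmp:saturated fluxes} combined with the one-dimensional Sobolev embedding $H^1(\RR) \hookrightarrow L^\infty(\RR)$ makes $N(U+v) = F'(U+v)(U_x + v_x)$ a bounded map from $\mathcal{C}([0,T], H^1)$ into $L^\infty([0,T], L^2)$, and the same parabolic regularity then places $\int_0^t S(t-s) N(U+v)\, ds$ in $\mathcal{C}([0,T], H^1(\RR))$.

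The contraction step rests on the Lipschitz estimate
\begin{equation*}
\|N(U+v_1) - N(U+v_2)\|_{L^2(\RR)} \leq C\bigl(\|U_x\|_{L^2} + \|v_1\|_{H^1} + \|v_2\|_{H^1}\bigr) \|v_1 - v_2\|_{H^1(\RR)},
\end{equation*}
obtained by splitting
\begin{equation*}
N(U+v_1) - N(U+v_2) = F'(U+v_1)(v_{1,x} - v_{2,x}) + \bigl(F'(U+v_1) - F'(U+v_2)\bigr)(U_x + v_{2,x})
\end{equation*}
and controlling the two pieces with $\|F'\|_\infty$ and $\|F''\|_\infty \|v_1 - v_2\|_{L^\infty}$ respectively, using Assumption \ref{asmp:saturated fluxes} and the embedding $H^1 \hookrightarrow L^\infty$. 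Composition with $\int_0^t S(t-s)\cdot\, ds$ contributes the usual $T_*^{1/2}$ factor, so the map $v \mapsto \Gamma(v)$ given by the right-hand side of the first display is a strict contraction on $\mathcal{C}([0,T_*], H^1)$ for $T_*$ small, almost surely on the event that $Z \in \mathcal{C}([0,T], H^1)$. Because the Lipschitz constant depends only on $\|F'\|_\infty$, $\|F''\|_\infty$, and $\|U_x\|_{L^2}$, and not on the solution itself, one extends the local fixed point to all of $[0,T]$ by reapplying the contraction on successive intervals of length $T_*$.

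The main obstacle I expect is the $\mathcal{C}([0,T], H^1)$-continuity of the nonlinear convolution: Lemma \ref{lem:dautray} as stated only produces $\mathcal{C}([0,T], L^2)$ continuity from an $H^{-1}$-valued source, whereas the fixed-point argument needs one extra derivative of continuity. One either revisits the Fourier/spectral proof from \cite[Chap. XVI, Sec. 3]{Dautray1992} to obtain the $L^2$-in / $H^1$-out upgrade of the lemma, or argues by density using the parabolic smoothing bound $\|S(t) f\|_{H^1} \lesssim t^{-1/2} \|f\|_{L^2}$ combined with dominated convergence. Everything else amounts to a standard parabolic Picard iteration.
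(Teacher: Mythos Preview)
Your contraction strategy is close in spirit to the paper's Picard iteration, but there is a genuine gap in the local-to-global step. Your own Lipschitz estimate reads
\[
\|N(U+v_1)-N(U+v_2)\|_{L^2}\le C\bigl(\|U_x\|_{L^2}+\|v_1\|_{H^1}+\|v_2\|_{H^1}\bigr)\|v_1-v_2\|_{H^1},
\]
yet two lines later you assert that ``the Lipschitz constant depends only on $\|F'\|_\infty$, $\|F''\|_\infty$, and $\|U_x\|_{L^2}$, and not on the solution itself.'' That is simply false: the factor $\|U_x+v_{2,x}\|_{L^2}$ coming from the second piece of your splitting makes the Lipschitz constant grow with the ball radius. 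Consequently the admissible step size $T_*$ shrinks as the solution grows, and the concatenation argument does not reach $T$ without an independent \emph{a priori} bound on $\sup_{t}\|v^\eps(t)\|_{H^1}$.

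This is exactly the point the paper isolates. Its proof runs Picard iteration on the full interval and first establishes (their Lemma in the appendix) that the iterates $u^n$ satisfy a uniform bound $\|u^n(t)-U\|_{H^1}\le C_u$ for all $n$ and $t\in[0,T]$, via a recursive integral inequality of the form $a_{n+1}^q(t)\le C+C\int_0^t a_n^q(s)\,ds$ obtained from the smoothing estimate $\|S(t-s)\|_{\mathcal{L}(L^2,H^1)}=\mathcal{O}((t-s)^{-1/2})$ and H\"older with $1<p<2$. Only with that bound in hand does the contraction-type estimate $b_{n+1}^q(t)\le C\int_0^t b_n^q(s)\,ds$ become available with a \emph{fixed} constant, yielding $\sum_n\sup_t b_n(t)<\infty$. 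Your argument can be repaired by inserting precisely this a priori step; without it the extension to $[0,T]$ is unjustified. The ${\cal C}([0,T],H^1)$ continuity issue you flag at the end is real but secondary, and is handled (as you suggest) by the same $L^2\to H^1$ smoothing bound.
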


\begin{proof}
	See Appendix \ref{pf:space of u^eps}
\end{proof}

The rate function of the LDP  is defined in terms of the mild solution of the control problem
\begin{eqnarray}
	\label{eq:control1a}
	&& u_t + ( F(u) )_x = (Du_x)_x + \Phi h(t,x), \quad \quad x\in \RR, \quad  t \in [0,T] , \\
	&& u(t=0,x)={U}(x) , \quad \quad x\in \RR,
\end{eqnarray}
where $h \in L^2( [0,T], L^2(\RR))$. The solution $u$ to this problem is denoted by ${\cal H}[h]$ and 
${\cal H}$ is a mapping from $ L^2( [0,T], L^2(\RR))$ to ${\cal E}^1$ provided $\Phi$ is Hilbert-Schmidt 
from $L^2$ to $H^1$. The following proposition is an extension of the LDP proved in \cite{Cardon-Weber1999}.

\begin{proposition}
	\label{prop:ldp}
	If $\Phi$ is Hilbert-Schmidt from $L^2$ to $H^1$, then the solutions $u^\eps$ satisfy a large deviation 
	principle in ${\cal E}^1$ with the good rate function
	\begin{equation}
		\label{eq:rate function}
		I(u) = \inf_{ h , u={\cal H}[h]} \frac{1}{2}\int_0^T  \| h(t,\cdot)\|_{L^2}^2 dt,
	\end{equation}
	with the convention $\inf \emptyset = \infty$.
\end{proposition}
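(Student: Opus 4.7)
The plan is to adapt the Freidlin--Wentzell contraction argument of \cite{Cardon-Weber1999}, who established the analogous LDP in $\mathcal{C}([0,T], L^2(\RR))$ under the weaker assumption that $\Phi$ is Hilbert--Schmidt from $L^2$ into $L^2$. The stronger regularity hypothesis $\Phi: L^2 \to H^1$ should upgrade that LDP to hold in the strictly smaller space $\mathcal{E}^1$, where the solution lives by Proposition \ref{prop:space of u^eps}. The overall strategy is: (i) show the control map $\mathcal{H}$ is continuous into $\mathcal{E}^1$, (ii) transfer Schilder's LDP for $\eps W$ by the contraction principle, and (iii) verify goodness of $I$.

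First I would establish that $\mathcal{H}: L^2([0,T], L^2(\RR)) \to \mathcal{E}^1$ is well defined. For each admissible $h$, Lemma \ref{lem:dautray} applied to $\Phi h \in L^2([0,T], H^1(\RR))$, together with a Picard iteration and Gronwall argument exploiting the uniform bounds on $F'$ and $F''$ from Assumption \ref{asmp:saturated fluxes}, yields a unique mild solution in $\mathcal{E}^1$. The heart of the extension is then to prove that $\mathcal{H}$ is continuous: subtracting the mild equations for $\mathcal{H}[h_1]$ and $\mathcal{H}[h_2]$ and denoting the difference by $v$, an energy estimate for $\|v(t)\|_{H^1}^2$ combined with the bounded Lipschitz property of $F$ should yield a bound of the form $\|v\|_{\mathcal{C}([0,T],H^1)} \leq C \|\Phi(h_1-h_2)\|_{L^2([0,T],H^1)}$.

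With the continuous control map in hand, the LDP follows by the classical route. I would verify exponential tightness of $u^\eps$ in $\mathcal{E}^1$ using exponential moment bounds on the stochastic convolution $Z^\eps = \eps \int_0^\cdot S(\cdot-s)\, dW(s)$ in $\mathcal{C}([0,T], H^1)$ (furnished by Lemma \ref{lem:HS} and the Hilbert--Schmidt hypothesis), and then invoke the contraction principle with Schilder's theorem for the cylindrical Wiener process to transfer the LDP through $\mathcal{H}$; this produces the candidate rate function (\ref{eq:rate function}). Goodness of $I$ follows because the sublevel sets $\{I \leq L\}$ are images under $\mathcal{H}$ of balls in $L^2([0,T], L^2(\RR))$, which are weakly compact; one must then check that $\mathcal{H}$ sends weakly convergent bounded sequences to norm-convergent sequences in $\mathcal{E}^1$, which is a standard consequence of the compactness afforded by the heat semigroup acting on $\Phi h_n$ with range in $H^1$.

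The main obstacle is executing the energy estimate of the second step in the $H^1$ norm rather than in $L^2$ as in \cite{Cardon-Weber1999}: one must control $\partial_x (F(u))_x = F''(u)\, u_x^2 + F'(u)\, u_{xx}$, and the quadratic term $F''(u)\, u_x^2$ demands the Sobolev embedding $H^1(\RR) \hookrightarrow L^\infty(\RR)$ (available in one spatial dimension) in conjunction with the uniform bound on $F''$ from Assumption \ref{asmp:saturated fluxes} to close the estimate. A secondary technical issue is identifying the rate function as the quadratic functional of $h$: since $\Phi$ is not necessarily injective, one must take the infimum over all $h$ producing the given $u$, which is precisely the variational expression in (\ref{eq:rate function}).
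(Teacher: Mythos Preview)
Your strategy---Gaussian LDP for the driving noise plus continuity of a solution map, then contraction---is exactly what the paper does, following \cite{Gautier2005,deBouard2010}. The continuity estimate you sketch (subtract the mild equations, use $\|S(t-s)\|_{\mathcal{L}(L^2,H^1)}=\mathcal O((t-s)^{-1/2})$, control the nonlinear difference via Assumption~\ref{asmp:saturated fluxes} and the embedding $H^1(\RR)\hookrightarrow L^\infty(\RR)$, close with Gronwall) is precisely the paper's argument, and your identification of the quadratic term $F''(u)u_x^2$ as the place where one-dimensionality is used is correct.

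There is, however, one point where your plan would stall as written. You propose to contract through $\mathcal{H}:L^2([0,T],L^2(\RR))\to\mathcal{E}^1$, but $u^\eps$ is not the image under $\mathcal{H}$ of any Gaussian object: formally $u^\eps=\mathcal{H}[\eps\dot B]$, yet $\dot B$ does not belong to $L^2([0,T],L^2(\RR))$, and Schilder's theorem for $\eps W$ lives in a path space on which $\mathcal{H}$ is not even defined. The paper sidesteps this by factoring through the stochastic convolution $\eps Z(t)=\eps\int_0^t S(t-s)\,dW(s)$. It first obtains an LDP for $\eps Z$ in $\mathcal{C}([0,T],H^1(\RR))$ from the general LDP for centered Gaussian measures on Banach spaces (this is where Lemma~\ref{lem:HS} and the $L^2\to H^1$ Hilbert--Schmidt hypothesis enter), and then proves continuity of the map sending the convolution term to the mild solution on that space. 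Since $u^\eps$ is exactly the image of $\eps Z$ under this map, the ordinary contraction principle applies directly and delivers both the LDP and goodness of $I$ at once. Your separate exponential-tightness and weak-to-strong compactness arguments are therefore unnecessary once the correct map is identified.
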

\begin{proof} 
	See Appendix \ref{pf:ldp}.
\end{proof}

In fact, $I(u)< \infty$ if and only if $u$ is in the range of ${\cal H}$. The LDP means that, for any 
$A \subset {\cal E}^1$, we have
\begin{equation}
	-{\cal J}(\mathring{A} ) \leq
	\liminf_{\eps \to 0} \eps^2 \log \PP( u^\eps \in A) 
	\leq \limsup_{\eps \to 0} \eps^2 \log \PP( u^\eps \in A) \leq - {\cal J}(\overline{A}) ,
\end{equation}
with
\begin{equation}
	{\cal J}(A) =\inf_{u \in A} I(u) .
\end{equation}
Note that the interior $\mathring{A}$ and closure $\overline{A}$ are taken in the topology associated with 
${\cal E}^1$. 

Although the convexity of the rate function $I$ is unknown, it is possible to show that $I$ satisfies the 
maximum principle: if the set of the rare event $A$ does not contain the exact solution $U(x-\gamma t)$, 
then $\inf_{u\in A}I(u)$ attains its minimum at the boundary of $A$.
\begin{proposition}
	\label{prop:maximum principle}
	If $U(x-\gamma t)\notin A$ and $u\in\mathring{A}$, then there exists a 
	sequence $\{u^n\}$ in $\mathcal{E}^1$ such that $I(u^n)<I(u)$ and $u^n\to u$ in 
	$\mathcal{E}^1$ as $n\to\infty$. As a consequence, any $u\in\mathring{A}$ can not be a 
	local minimizer of $I$.
\end{proposition}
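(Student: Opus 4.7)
The plan is to exploit the fact that the unperturbed traveling wave $U(x-\gamma t)={\cal H}[0]$ is the unique zero of the rate function. Indeed, $I(U(x-\gamma t))=0$ is immediate, and conversely if $I(v)=0$ then by definition of the infimum in (\ref{eq:rate function}) there is a sequence of controls $h_k$ with ${\cal H}[h_k]=v$ and $\|h_k\|_{L^2([0,T],L^2(\RR))}\to 0$; continuity of the map $h\mapsto{\cal H}[h]$ from $L^2([0,T],L^2(\RR))$ into ${\cal E}^1$ (a continuity already used in the proof of Proposition \ref{prop:ldp}) then forces $v={\cal H}[0]=U(x-\gamma t)$. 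Since $u\in\mathring{A}\subset A$ while $U(x-\gamma t)\notin A$ by hypothesis, this comparison gives the crucial strict inequality $I(u)>0$, which is what drives the rest of the argument.

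In the generic case $I(u)<\infty$, I would pick a sequence $h_k\in L^2([0,T],L^2(\RR))$ with ${\cal H}[h_k]=u$ and $\tfrac12\|h_k\|_{L^2}^2\le I(u)+1/k$, and then scale the control by a factor $\alpha\in(0,1)$: set $v_{k,\alpha}:={\cal H}[\alpha h_k]\in{\cal E}^1$. By definition of $I$ as an infimum,
\[
I(v_{k,\alpha})\;\le\;\tfrac{1}{2}\|\alpha h_k\|_{L^2}^2\;=\;\alpha^2\cdot\tfrac{1}{2}\|h_k\|_{L^2}^2\;\le\;\alpha^2\bigl(I(u)+1/k\bigr),
\]
which is strictly smaller than $I(u)$ as soon as $\alpha$ is close enough to $1$ and $k$ is large enough; that this is possible uses precisely $I(u)>0$. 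On the other hand, as $\alpha\to 1$ one has $\alpha h_k\to h_k$ strongly in $L^2$, hence $v_{k,\alpha}\to u$ in ${\cal E}^1$ by continuity of ${\cal H}$. A diagonal extraction then produces $u^n:=v_{k_n,\alpha_n}$ with $\alpha_n\uparrow 1$ and $k_n\to\infty$ such that $u^n\to u$ in ${\cal E}^1$ and $I(u^n)<I(u)$.

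The case $I(u)=\infty$ requires an additional density argument: one shows that the image of ${\cal H}$ is dense in ${\cal E}^1$ near $u$, for instance by regularizing $u$ in space and time and then reading off an admissible control from the identity $\Phi h = u_t+(F(u))_x-Du_{xx}$ using the assumed structure of $\Phi$; any such approximating sequence automatically satisfies $I(u^n)<\infty=I(u)$. For the stated consequence about local minimizers, I use that $\mathring{A}$ is open, so the sequence $u^n\to u\in\mathring{A}$ constructed above lies in $\mathring{A}$ for all sufficiently large $n$; combined with $I(u^n)<I(u)$, this rules out $u$ being a local minimizer of $I$ on $\mathring{A}$.

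The main technical obstacle is the continuity of the control-to-state map ${\cal H}:L^2([0,T],L^2(\RR))\to{\cal E}^1$, which is used both to identify $U(x-\gamma t)$ as the unique zero of $I$ and to pass $v_{k,\alpha}\to u$ along the scaled controls; this relies on the energy and fixed-point estimates from the proof of Proposition \ref{prop:space of u^eps} applied to the controlled equation (\ref{eq:control1a}). The secondary nuisance is the edge case $I(u)=\infty$, where the density-of-range argument has to be made compatible with the particular Hilbert--Schmidt structure imposed on $\Phi$.
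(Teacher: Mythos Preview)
Your proposal is correct and follows essentially the same strategy as the paper: pick an almost-optimal control $h$ with ${\cal H}[h]=u$, scale it by a factor $\alpha\in(0,1)$, and use $I(u)>0$ together with the continuity of the control-to-state map (established via the Gronwall estimate in the proof of Proposition~\ref{prop:ldp}) to conclude. The only cosmetic difference is that the paper avoids the diagonal extraction by choosing in one stroke the explicit scaling $\alpha^n=(1-(nI(u))^{-1})^{1/2}$, which together with $\tfrac12\|h^n\|^2\ge I(u)$ gives $I(u^n)\le\tfrac12\|h^n\|^2-1/n<I(u)$ directly; your two-parameter construction followed by diagonalization achieves the same thing. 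It is worth noting that you explicitly flag and sketch the case $I(u)=\infty$, which the paper's proof tacitly omits.
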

\begin{proof}
	See Appendix \ref{pf:maximum principle}.
\end{proof}
\section{Wave displacement from an elementary point of view}
\label{sec:rare event1}

In this paper we are interested in estimating the probability of large deviations from the deterministic 
path ${U}(x-\gamma t)$. In this section, we first study in a very elementary way how the center of the solution $u^\eps$ at time $T$ 
can deviate from its unperturbed value. The center of a function 
$u \in {\mathcal E}^1$ is defined as
\begin{equation}
	{\cal C}[u](t) = - \frac{ \int_{-\infty}^\infty [u(t,x) - U(x)] dx }{u_- - u_+} ,
\end{equation}
provided that the integral is well-defined.

\begin{proposition}
	\label{prop:1}%
	If the covariance function $C$ is in $L^1(\RR\times \RR)$, then the center of $u^\eps$ is  well-defined 
	for any time $t \in [0,T]$ almost surely and it is given by
	\begin{equation}
		\label{center1}
		{\cal C}[u^\eps](t) = \gamma t - \eps \frac{ \int W (t,x) dx }{u_- - u_+} .
	\end{equation}
	It is a Gaussian process with mean $\gamma t$ and covariance
	\begin{equation}
		\mathbf{Cov} \big( {\cal C}[u^\eps](t) ,{\cal C}[u^\eps](t') \big) 
		= \eps^2  \frac{\iint C(x,x') dx dx'}{(u_- - u_+)^2} \, t \wedge t'  .
	\end{equation}
\end{proposition}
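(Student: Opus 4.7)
The plan is to work with $w(t,x) := u^\eps(t,x) - U(x-\gamma t)$, which has zero initial condition and inherits from $u^\eps$ and $U(\cdot-\gamma t)$ the common asymptotes $u_\pm$ at $x = \pm\infty$. Subtracting the traveling-wave equation for $U(x-\gamma t)$ from (\ref{eq:pert0}) yields
\begin{equation*}
w_t + \partial_x\bigl[F(u^\eps) - F(U(\cdot-\gamma t))\bigr] = D w_{xx} + \eps \dot W(t,x), \qquad w(0,\cdot)=0.
\end{equation*}
Formally integrating over $x \in \RR$ kills the two divergence-form terms, giving $\frac{d}{dt}\int w(t,x)\,dx = \eps \int \dot W(t,x)\,dx$ and hence $\int w(t,x)\,dx = \eps \int W(t,x)\,dx$. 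Combining this with the deterministic identity $\int[U(x-\gamma t) - U(x)]\,dx = (u_- - u_+)\gamma t$ (obtained from a shift-and-subtract argument $\int_{-R}^R = \int_{-R-\gamma t}^{-R} U - \int_{R-\gamma t}^R U$ using the exponential decay $U - u_\pm \to 0$ dictated by the traveling-wave ODE) and applying the definition of ${\cal C}$ produces (\ref{center1}).

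The main obstacle is making this $x$-integration rigorous, since $w \in {\cal C}([0,T], H^1(\RR))$ does not entail $w(t,\cdot) \in L^1(\RR)$. I would introduce a smooth cutoff $\chi_R$ equal to $1$ on $[-R, R]$, supported in $[-2R, 2R]$, with $\|\chi_R'\|_{L^2} = O(R^{-1/2})$ and $\|\chi_R''\|_{L^2} = O(R^{-3/2})$, test the identity for $w$ against $\chi_R$, and pass to the limit $R \to \infty$. After integrating by parts, the diffusion contribution $D\int \chi_R'' w\,dx$ is bounded by Cauchy-Schwarz and $w \in L^2$; the flux contribution $\int \chi_R' [F(u^\eps) - F(U(\cdot-\gamma t))]\,dx$ is controlled by the Lipschitz estimate $|F(u^\eps) - F(U(\cdot-\gamma t))| \leq C_F |w|$ from Assumption \ref{asmp:saturated fluxes} and Cauchy-Schwarz; both are $O(R^{-1/2})$ and hence vanish. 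What remains is to verify that $\lim_{R\to\infty}\int \chi_R(x) w(t,x)\,dx$ exists almost surely and matches $\eps \int W(t,x)\,dx$; this is precisely where the hypothesis $C \in L^1$ enters, to control the noise term.

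For the Gaussian structure, $\int W(t,x)\,dx$ is a Wiener integral. Writing $W(t,x) = \int \Phi(x,x') B(t,x')\,dx'$ and applying Fubini, one has $\int W(t,x)\,dx = \int \Psi(x') B(t,x')\,dx'$ with $\Psi(x') := \int \Phi(x,x')\,dx$. The hypothesis $C \in L^1$ is equivalent to $\Psi \in L^2(\RR)$, since $\|\Psi\|_{L^2}^2 = \iint \Phi(x,x')\Phi(y,x')\,dx\,dy\,dx' = \iint C(x,y)\,dx\,dy$. Consequently $\int W(t,x)\,dx$ is centered Gaussian with covariance $(t \wedge t') \iint C\,dx\,dy$; substituting into (\ref{center1}) yields the announced Gaussian process ${\cal C}[u^\eps]$ with mean $\gamma t$ and the stated covariance.
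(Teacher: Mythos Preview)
Your approach is correct but takes a genuinely different route from the paper's proof. The paper works directly with the mild formulation (\ref{mild:ueps}) and proves a technical lemma about the heat semigroup: if $f\in\mathcal C([0,T],H^1(\RR))$, then $\int_{\RR}\big[\int_0^t S(t-s)\partial_x f(s)\,ds\big](x)\,dx=0$. This is established by writing the heat kernel explicitly, integrating by parts to move $\partial_y$ onto the Gaussian, and then using an $L^2$–tail estimate on $f$ to control the boundary contribution as the spatial truncation is removed. The nonlinear term in the mild formula is exactly of this divergence form, so its $x$–integral vanishes; the remaining terms ($S(t)U-U$ and the stochastic convolution $\eps Z(t)$) are handled separately, and the $C\in L^1$ hypothesis enters to make $\int W(t,x)\,dx$ a well-defined Brownian motion.

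Your argument instead subtracts the traveling-wave equation, tests the resulting PDE for $w=u^\eps-U(\cdot-\gamma t)$ against a cutoff $\chi_R$, and uses the $H^1$ control on $w$ together with the decay of $\|\chi_R'\|_{L^2}$ and $\|\chi_R''\|_{L^2}$ to kill the flux and diffusion terms in the limit. This is a clean weak-formulation argument that avoids any explicit heat-kernel manipulation. The one point you should make explicit is that the mild solution $u^\eps$ constructed in Proposition~\ref{prop:space of u^eps} is also a weak solution, so that testing against $\chi_R$ is legitimate; this equivalence is standard for semilinear parabolic SPDEs but is not stated in the paper. Your treatment of the Gaussian structure via $\Psi=\int\Phi(\cdot,x')\,dx'$ and the identity $\|\Psi\|_{L^2}^2=\iint C$ is exactly what is needed and matches the paper's final step. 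Overall, the paper's semigroup lemma is more self-contained given the mild-solution framework already in place, while your cutoff argument is more elementary and transparently displays where each hypothesis is used.
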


\begin{proof}
	See Appendix \ref{app:proofprop1}.
\end{proof}
 
In the absence of noise, the center of the solution increases linearly as $\gamma t$. In the presence of 
noise we can characterize the probability of the rare event
\begin{equation}
	B = \big\{ u \in {\cal E}^1, \, {\cal C}[u](T) \geq \gamma T + x_0 \big\}  ,
\end{equation}
where $x_0  \in [0,\infty)$.

\begin{proposition}
	If  the covariance function $C$ is in $L^1(\RR\times \RR)$, then 
	\begin{equation}
		\label{eq:Prare1}
		\PP( u^\eps \in B) \sim \exp \Big(-\frac{x_0^2 (u_- - u_+)^2}{2 \eps^2T \iint C(x,x') dx dx'}\Big).
	\end{equation}
\end{proposition}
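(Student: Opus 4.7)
My plan is to bypass the general LDP machinery and reduce everything to a one-dimensional Gaussian tail, using Proposition~\ref{prop:1} as a black box. That proposition already identifies $\mathcal{C}[u^\eps](T)$ as a Gaussian random variable with mean $\gamma T$ and variance
\[
	\sigma_\eps^2 = \eps^2 \, \frac{\iint C(x,x')\, dx\, dx'}{(u_- - u_+)^2}\, T .
\]
Therefore $\PP(u^\eps \in B) = \PP(\mathcal{C}[u^\eps](T) - \gamma T \geq x_0)$ is exactly $\PP(\sigma_\eps Z \geq x_0)$ with $Z \sim \mathcal{N}(0,1)$, which eliminates the infinite-dimensional structure.

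Next I would invoke the classical Mills-ratio asymptotic $\PP(Z \geq a) \sim (a\sqrt{2\pi})^{-1} e^{-a^2/2}$ as $a \to \infty$, applied with $a = x_0/\sigma_\eps$, which tends to $\infty$ as $\eps \to 0$. Substituting gives
\[
	\PP(u^\eps \in B) \sim \frac{\sigma_\eps}{x_0 \sqrt{2\pi}} \exp\Big(-\frac{x_0^2}{2 \sigma_\eps^2}\Big),
\]
and the exponent is precisely $-x_0^2 (u_- - u_+)^2 / (2 \eps^2 T \iint C(x,x')\, dx\, dx')$, matching (\ref{eq:Prare1}). The algebraic prefactor in $\sigma_\eps$ is subexponential in $\eps$ and so is absorbed into the $\sim$ in the logarithmic-equivalence sense that the LDP framework of the paper uses throughout.

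The only real pitfall is interpretive, namely fixing the meaning of $\sim$. Under the (weaker) reading of logarithmic equivalence, the elementary two-sided bound $a^{-1}(1-a^{-2}) e^{-a^2/2} \leq \sqrt{2\pi}\,\PP(Z \geq a) \leq a^{-1} e^{-a^2/2}$ for $a$ large makes the argument immediate and avoids any appeal to sharp Mills asymptotics. As a consistency check one can independently derive the same exponent from the full LDP of Proposition~\ref{prop:ldp}: since $u \mapsto \mathcal{C}[u](T)$ is a continuous linear functional on $\mathcal{E}^1$, the contraction principle transports the LDP to $\RR$, and a Cauchy--Schwarz minimization of $\tfrac{1}{2}\int_0^T \|h(t,\cdot)\|_{L^2}^2\, dt$ under the linear constraint $\mathcal{C}[\mathcal{H}[h]](T) = \gamma T + x_0$ yields the induced rate $x_0^2 (u_- - u_+)^2 / (2 T \iint C\, dx\, dx')$, in agreement with the Gaussian computation above. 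This cross-check confirms that Proposition~\ref{prop:ldp} is sharp on this elementary anomalous-displacement event and provides a blueprint for the harder, nonlinear bounds proved later in Section~\ref{sec:displacement_general}.
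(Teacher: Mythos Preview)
Your main argument is correct and is exactly the paper's approach: the paper states only that the result is ``a direct corollary of Proposition~\ref{prop:1},'' and your Gaussian-tail computation via Mills' ratio is precisely how one unpacks that corollary. The interpretation of $\sim$ as logarithmic equivalence is also the one the paper adopts immediately after the statement.

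However, your ``consistency check'' via the contraction principle contains a genuine error. You assert that $u \mapsto \mathcal{C}[u](T)$ is a continuous linear functional on $\mathcal{E}^1$, but it is not: the topology on $\mathcal{E}^1$ is that of $\mathcal{C}([0,T],H^1(\RR))$ for $u-U$, and the $L^1$ integral defining $\mathcal{C}[u](T)$ is not controlled by the $H^1$ norm. The paper makes exactly this point in the paragraph following the proposition, noting that the interior of $B$ in $\mathcal{E}^1$ is empty because one can construct sequences bounded in $H^1$ that blow up in $L^1$; this is listed as one of the reasons the LDP of Proposition~\ref{prop:ldp} is \emph{not} used here. So the contraction-principle route you sketch does not go through as stated, and the agreement you observe is fortuitous rather than a rigorous cross-check.
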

The approximate equality means that 
$$
\lim_{\eps \to 0} \eps^2 \log \PP( u^\eps \in B) = -\frac{x_0^2 (u_- - u_+)^2}{2  T \iint C(x,x') dx dx'} .
$$
This proposition is a direct corollary of Proposition \ref{prop:1}.

If $x_0=0$ then $U(x-\gamma t) \in B$ and $\PP(u^\eps \in B)=1/2$. If $x_0>0$ then the set $B$ is indeed 
exceptional in that it corresponds to the event in which the center of the profile is anomalously ahead of 
its expected position. Note that the scaling $x_0^2/T$ in (\ref{eq:Prare1})  corresponds to that of the 
exit problem of a Brownian particle.

The LDP for $u^\eps$ stated in Proposition \ref{prop:ldp} is not used here for two reasons:
\begin{enumerate}
	\item It does not give a good result because the interior of $B$ in ${\mathcal E}^1$ is empty (since we 
	can construct a sequence of functions bounded in $H^1$ that blows up in $L^1$).
	
	\item The distribution of the center $ {\cal C}[u^\eps](T) $ is here explicitly known for any $\eps$. 
	This is fortunate and it is not always true. When the rare event is more complex, than the LDP 
	for $u^\eps$ is useful as we will show in the next section.
\end{enumerate}
\section{Wave displacement from the large deviations point of view}
\label{sec:displacement_general}

\subsection{The framework and result}

In this section, we use the large deviation principle to compute the probability of the rare event that the 
perturbed traveling wave is the same profile but with the displacement $x_0$ at time $T$:
\begin{equation}
	\label{eq:rare event of the wave dispacement}
	A = \{u\in\mathcal{E}^1\mbox{ such that } u(0,x)=U(x), \, u(T,x)=U(x-\gamma T - x_0)\}.
\end{equation}

The value of the rate function heavily depends on the kernel $\Phi$ and formally the simplest kernel we can 
have is the identity operator. Of course, the identity operator is not Hilbert-Schmidt, but with the given 
$A$, we can construct a Hilbert-Schmidt $\Phi$ such that $\Phi$ is approximately the identity in the region 
of interest.

\begin{assumption}
	\label{asmp:conditions of Phi}
	\begin{enumerate}
		\item We assume that the center of transition of $U(x)$ in 
		(\ref{eq:rare event of the wave dispacement}) is $0$ and $x_0 + \gamma T \geq 0$.
		
		\item The kernel $\Phi^{l_c}_{L_0}$ has the following form:
		\begin{equation}
			\label{eq:covariance kernel Phi}
			\Phi^{l_c}_{L_0}(x,x') = \sigma \phi_0\Big(\frac{x}{L_0}\Big) 
			\frac{1}{l_c}\phi_1\Big(\frac{x-x'}{l_c}\Big),
		\end{equation}
		where $\sigma$, $L_0$ and $l_c$ are positive constants.
		
		\item $\phi_0$ and $\phi_1$ are $L^1\cap H^1$ functions so that their Fourier transforms are 
		well-defined in $\RR$ and $\Phi$ is Hilbert-Schmidt from $L^2$ to $H^1$ (see Lemma 
		\ref{lma:sifficient condition for Hilbert-Schmidt Phi}). $\phi_0$ and $\phi_1$ are normalized so that
		$\phi_0(0)=\hat{\phi}_1(0)=1$.
		
		\item $\phi_0\in\mathcal{C}^\infty$ is positive valued and $\hat{\phi}_1$ is nonzero. In addition, 
		$1/\phi_0(x)$ and $1/\hat{\phi}_1(\xi)$ have at most polynomial growth at $\pm \infty$.
		
		\item $\phi_0(x)$ is increasing on $x\in(-\infty,-1)$, decreasing on $x\in(1,\infty)$ and 
		identically equal to $1$ on $x\in[-1,1]$. By this setting, the support of the noise kernel $\Phi$ is 
		roughly $[-L_0,L_0]$.
	\end{enumerate}
\end{assumption}
 
The following theorem is the main result of this section, and the proof will be given in 
the next two subsections.
\begin{theorem}
	\label{thm:main}
	Let $A$ be defined by (\ref{eq:rare event of the wave dispacement}). There exists a constant $C_0$ such 
	that for all $\Phi^{l_c}_{L_0}$ satisfying Assumption \ref{asmp:conditions of Phi} with 
	$L_0=\gamma T+x_0+C_0$, the quantity $\mathcal{J}(A)$ of the large deviation problem 
	(\ref{eq:control1a}) generated by $\Phi^{l_c}_{L_0}$ has the following asymptotic (in the sense that 
	$l_c\to 0$) scales:
	\begin{equation}
		\label{eq:upper and lower bounds for J(A), general gamma}
		\mathcal{J}(A) = \inf_{u\in A}I(u) \overset{l_c\to 0}{=}
		\begin{cases}
			\Theta(x_0^2),\quad   &\text{for $|x_0|$ small and any fixed $T$,}\\
			\Theta(x_0^2/T),\quad &\text{for $|x_0|$ small and $T$ small,}\\
			\Theta(|x_0|),\quad   &\text{for $|x_0|$ large and any fixed $T$,}\\
			\Theta(|x_0|/T),\quad &\text{for $|x_0|$ large and $T$ small,}\\
			\Theta(1/T),\quad     &\text{for $T$ small and any fixed $x_0$.}
		\end{cases}
	\end{equation}
	Here $\mathcal{J}(A)\overset{l_c\to 0}{=}\Theta(x_0^2)$ and similar expressions mean that there are constants $C_1$ and 
	$C_2$ with the units such that $C_1 x_0^2$ and $C_2 x_0^2$ are dimensionless and 
	$C_1 x_0^2\leq\mathcal{J}(A)\leq C_2 x_0^2$ as $l_c\to 0$.
\end{theorem}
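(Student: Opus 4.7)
The plan is to sandwich $\mathcal{J}(A)=\inf_{u\in A}I(u)$ between an upper bound from explicit admissible pairs $(u,h)$ and a lower bound from the conservation-law structure. Throughout, the limit $l_c\to 0$ turns convolution with $\phi_1^{(l_c)}(x)=l_c^{-1}\phi_1(x/l_c)$ into the identity, since $\widehat{\phi_1^{(l_c)}}(\xi)=\widehat{\phi_1}(l_c\xi)\to 1$; the polynomial-growth hypothesis on $1/\widehat{\phi_1}$ makes the inversion $\widehat{h}(\xi)=\widehat{G/(\sigma\phi_0(\cdot/L_0))}(\xi)/\widehat{\phi_1}(l_c\xi)$ of $\Phi h=G$ well-defined, with $\|h\|_{L^2}^2$ converging to the $l_c=0$ value.

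\textbf{Upper bounds.} The natural first ansatz is the rigidly shifted profile $u(t,x)=U(x-\gamma t-\alpha(t)x_0)$ with $\alpha(0)=0$ and $\alpha(T)=1$. Substituting into $u_t+(F(u))_x-Du_{xx}=\Phi h$ and eliminating terms with the traveling-wave ODE $DU_x=F(U)-F(u_-)-\gamma(U-u_-)$, the residual forcing is $\Phi h(t,x)=-\alpha'(t)\,x_0\,U'(x-\gamma t-\alpha(t)x_0)$. The choice $L_0\geq \gamma T+x_0+C_0$ (with $C_0$ absorbing the essential support of $U'$) keeps $\phi_0=1$ on the support of the residual for all $t\in[0,T]$, so $\|h(t)\|_{L^2}^2\to (\alpha'(t))^2\,x_0^2\,\|U'\|_{L^2}^2/\sigma^2$ as $l_c\to 0$. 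Optimising $\alpha(t)=t/T$ gives $I(u)\leq x_0^2\|U'\|_{L^2}^2/(2\sigma^2 T)$, which settles the upper bound in the three regimes whose right-hand side is $x_0^2$, $x_0^2/T$, or $1/T$. In the two large-$x_0$ regimes this ansatz is too costly, and a second construction with a broadly supported, almost time-constant $h(t,x)\propto \phi_0(x/L_0)$ is needed; it saturates Cauchy--Schwarz for the mass constraint below, and a short terminal shift-type correction is pasted on to drive the solution to the exact target profile.

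\textbf{Lower bound via mass conservation.} Integrating the controlled PDE over $x\in\RR$ and using $u(t,\cdot)-U\in H^1(\RR)$ (Proposition~\ref{prop:space of u^eps}) to kill boundary terms at $\pm\infty$, one finds $\frac{d}{dt}\int(u-U)\,dx=-\gamma(u_+-u_-)+\int\Phi h\,dx$. Integrating in time and enforcing $u(T,\cdot)=U(\cdot-\gamma T-x_0)$ yields
\begin{equation*}
    \int_0^T\!\!\int_\RR \Phi h(t,x)\,dx\,dt = x_0(u_- -u_+).
\end{equation*}
With $\psi(x')=\int\Phi(x,x')\,dx$, so that $\int\Phi h\,dx=\langle h,\psi\rangle_{L^2}$, Cauchy--Schwarz in space and in time gives
\begin{equation*}
    I(u)\geq \frac{x_0^2(u_- -u_+)^2}{2\,T\,\|\psi\|_{L^2}^2}.
\end{equation*}
A direct computation using $\widehat{\phi_1}(0)=1$ shows $\|\psi\|_{L^2}^2\to\sigma^2 L_0\|\phi_0\|_{L^2}^2$ as $l_c\to 0$. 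Substituting $L_0=\gamma T+x_0+C_0$ produces all five lower bounds: $\gtrsim x_0^2/T$ for small $x_0$, $\gtrsim x_0/T$ for large $x_0$ and small $T$, $\gtrsim x_0$ for large $x_0$ and fixed $T$, and $\gtrsim 1/T$ for fixed $x_0$ and small $T$.

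\textbf{Main obstacle.} The principal non-routine step is the upper-bound construction in the large-$x_0$ regimes: one must exhibit $u\in A$ whose cost matches the mass-conservation lower bound, rather than the $x_0^2/T$ delivered by the shift ansatz. The idea is to saturate Cauchy--Schwarz by taking $h$ approximately proportional to $\psi$ on most of $[0,T]$, which injects the required excess mass at minimal cost, and then to append a short shift-type phase that hits the target profile exactly. Showing that the appended correction contributes only lower-order cost is delicate: it requires a priori control of the intermediate solution of the controlled conservation law, and this is where the orbital stability of viscous profiles together with the boundary principle in Proposition~\ref{prop:maximum principle} should play a role.
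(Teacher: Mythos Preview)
Your lower bound via mass conservation and Cauchy--Schwarz is exactly the paper's Proposition~\ref{prop:general form of lower bounds}, and your small-$|x_0|$ upper bound via the rigid shift $v(t,x)=U(x-(t/T)(\gamma T+x_0))$ is exactly the paper's test function in Section~\ref{sec:bounds for small |x_0|}. So on those parts you are aligned with the paper.

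The gap is in the large-$|x_0|$ upper bound. You correctly identify that the shift ansatz is too costly there, but the construction you propose---take $h\propto\psi$ to saturate the mass constraint, then append a terminal correction to hit the exact profile---is left unfinished, and you flag the correction step as delicate (invoking orbital stability and Proposition~\ref{prop:maximum principle}). None of that machinery is needed. The paper's device is the \emph{linear interpolation}
\[
w(t,x)=(1-t/T)\,U(x)+(t/T)\,U(x-\gamma T-x_0),
\]
which is manifestly in $A$. Its residual splits as $w_t+(F(w))_x-Dw_{xx}=T^{-1}[U(\cdot-\gamma T-x_0)-U]+(F(w))_x-Dw_{xx}$. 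For large $|x_0|$ the first piece has $\|U(\cdot-\gamma T-x_0)-U\|_{L^2}^2=\Theta(|x_0|)$ (two essentially disjoint shocks differ by $u_--u_+$ on an interval of length $\sim|x_0|$), contributing $\Theta(|x_0|/T)$ to $I(w)$; the second piece has $L^2$ norm bounded uniformly in $x_0$ since $w_x$ is a convex combination of two translates of $U_x$ and $F'$ is bounded, contributing $\Theta(T)$. Hence $I(w)\le\Theta(|x_0|/T)+\Theta(T)$, which gives both large-$|x_0|$ upper bounds directly, with no control of intermediate solutions and no stability argument. Replacing your incomplete saturation-plus-correction scheme by this explicit test function closes the proof.
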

\begin{proof}
	See Section \ref{sec:bounds for small |x_0|} and \ref{sec:bounds for large |x_0|}.
\end{proof}

\textbf{Remark.} $C_0=\max\{C_v,C_w\}$, where $C_v$ and $C_w$ will be determined in Lemma 
\ref{lma:approximation of phi_0 for small |x_0|} and \ref{lma:approximation of phi_0 for large |x_0|}, 
respectively. $L_0=\gamma T+x_0+C_0$ means that the range of noise covers the region of interest of $A$ so 
that the rare event $A$ is possible and at the same time the range is not too large to cause the waste of 
energy; the small $l_c$ means that the noise in this region is weakly correlated and can be treated as the 
white noise.

We note that there are two occurrences of $T$ in $\mathcal{J}(A)$: in 
(\ref{eq:rate function}), we integrate the $\|h(t,\cdot)\|_{L^2}^2$ from $0$ to $T$, and 
the rare event $A$ is $T$-dependent. These two occurrences of $T$ would make the 
$T$-dependence of $\mathcal{J}(A)$ nontrivial; if $T$ is small, the $T$-dependence in 
(\ref{eq:rare event of the wave dispacement}) is negligible and we can have the scale in 
$T$. Therefore, if the traveling wave is stationary ($\gamma=0$), then $A$ has no 
$T$-dependence so the sharper bounds can be obtained.

\begin{corollary}
	If $\gamma=0$, then (\ref{eq:upper and lower bounds for J(A), general gamma}) can 
	be refined:
	\begin{equation}
		\label{eq:upper and lower bounds for J(A), gamma=0}
		\mathcal{J}(A) = \inf_{u\in A}I(u) \overset{l_c\to 0}{=}
		\begin{cases}
			\Theta(x_0^2/T),\quad &\text{for $|x_0|$ small,}\\
			\Theta(|x_0|),\quad   &\text{for $|x_0|$ large and any fixed $T$,}\\
			\Theta(|x_0|/T),\quad &\text{for $|x_0|$ large and $T$ small,}\\
			\Theta(1/T),\quad     &\text{for any fixed $x_0$.}
		\end{cases}
	\end{equation}
\end{corollary}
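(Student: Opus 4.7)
The plan is to follow the proofs of Theorem \ref{thm:main} in Sections \ref{sec:bounds for small |x_0|} and \ref{sec:bounds for large |x_0|}, observing that when $\gamma=0$ the target profile $U(x-\gamma T - x_0) = U(x-x_0)$ in $A$ is independent of $T$. This eliminates the coupling between $T$ and the rare event flagged in the remark after Theorem \ref{thm:main}, so the $1/T$ factor that was previously only visible in the small-$T$ regime now persists uniformly in $T$.

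For the upper bounds I would test with the explicit path $u(t,x) = U(x - x_0 t/T)$. Since $\gamma = 0$, the stationary profile equation $(F(U))_x = D U_{xx}$ holds, and direct substitution gives
\begin{equation*}
u_t + (F(u))_x - D u_{xx} = -\frac{x_0}{T}\, U_x\!\Big(x - \frac{x_0 t}{T}\Big).
\end{equation*}
Under Assumption \ref{asmp:conditions of Phi}, with $L_0$ large enough that $\phi_0$ equals $1$ on the support of the translated $U_x$, inverting $\Phi^{l_c}_{L_0}$ on this source yields a cost $\tfrac12\int_0^T \|h\|_{L^2}^2\,dt = O(x_0^2/T)$ with a $T$-independent constant as $l_c \to 0$. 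An analogous nonlinear ramp from the large-$|x_0|$ part of Theorem \ref{thm:main} gives the $O(|x_0|/T)$ bound uniformly in $T$; the case $\Theta(1/T)$ for fixed $x_0$ is then a consequence of the other three.

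For the lower bounds, I would exploit that the linearization $\mathcal{L} v = -(F'(U)v)_x + D v_{xx}$ of the unperturbed equation around the stationary profile has $U'$ in its kernel, by translation invariance. Writing $u = U+v$ for small $|x_0|$, the terminal condition reads $v(T) = -x_0 U' + O(x_0^2)$. Projecting $v_t = \mathcal{L} v + \Phi h$ onto a left null vector $w$ of $\mathcal{L}$ gives the $T$-independent identity
\begin{equation*}
-x_0 \langle w, U'\rangle + O(x_0^2) = \int_0^T \langle w, \Phi h(s)\rangle\,ds,
\end{equation*}
and Cauchy-Schwarz in time yields $\int_0^T \|h\|_{L^2}^2\,dt \geq C x_0^2/T$ with a constant $C>0$ depending only on $w$ and $\Phi$. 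For large $|x_0|$ the same time-integration argument is applied to the test-function lower bound of Theorem \ref{thm:main}, which becomes $T$-independent when $\gamma=0$.

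The main obstacle I anticipate is controlling the nonlinear correction $O(x_0^2)$ uniformly over all $T$ in the lower-bound argument: orbital stability of $U$ provides decay of small perturbations, but one must ensure that the linearization remains valid on arbitrarily long time intervals without error accumulation. A secondary technical point is verifying the existence of a left null vector $w$ of $\mathcal{L}$ with $\langle w, U'\rangle \neq 0$ and suitable decay, which should follow from the structure of the stationary profile and the entropy conditions \eqref{hyp1a}--\eqref{hyp1b}.
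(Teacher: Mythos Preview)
Your upper-bound argument is essentially the paper's: the test function $v(t,x)=U(x-x_0t/T)$ is exactly the specialization of the paper's $v$ to $\gamma=0$, and the resulting bound $\mathcal{J}(A)\le \tfrac{1}{2\sigma^2}\tfrac{x_0^2}{T}(2\|U_x\|_{L^2}^2+2)$ already carries a $T$-independent constant.

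Your lower-bound route, however, is more elaborate than necessary and the obstacles you flag are artifacts of that detour. The paper does not linearize. Its lower bound (Proposition~\ref{prop:general form of lower bounds}) tests the \emph{full} nonlinear equation against the constant function $1$: since $\langle (F(u))_x - Du_{xx},1\rangle=0$ exactly for any $u$ with the correct boundary behavior, one gets
\[
\|h(t,\cdot)\|_{L^2}^2 \;\ge\; \|(\Phi^{l_c}_{L_0})^T 1\|_{L^2}^{-2}\Big(\tfrac{d}{dt}\int [u(t,x)-U(x)]\,dx\Big)^2,
\]
and Cauchy--Schwarz in time yields the bound with no linearization error whatsoever. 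In your language, the constant function $1$ \emph{is} a left null vector of $\mathcal{L}$ (since $\mathcal{L}^*1=F'(U)\cdot 0+D\cdot 0=0$) with $\langle 1,U'\rangle=u_+-u_-\neq 0$, and for this particular choice the nonlinear remainder you worry about vanishes identically because it is a total $x$-derivative. So your ``main obstacle'' --- controlling $O(x_0^2)$ uniformly in $T$ --- and your ``secondary technical point'' --- existence of a suitable $w$ --- both evaporate once $w=1$ is chosen.

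You also do not name the actual mechanism by which $\gamma=0$ sharpens the result. In the paper's lower bound the only $T$-dependence beyond the explicit $T^{-1}$ sits in $\|(\Phi^{l_c}_{L_0})^T 1\|_{L^2}^2\to\sigma^2 L_0\|\phi_0\|_{L^2}^2$ with $L_0=\gamma T+x_0+C$. When $\gamma=0$ this becomes $L_0=x_0+C$, independent of $T$, and the bounds \eqref{eq:lower bound with small |x_0|} and \eqref{eq:lower bound with large |x_0|} immediately give $\Theta(x_0^2/T)$ and $\Theta(|x_0|/T)$ for all $T$, not just small $T$.
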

\begin{proof}
	See Section \ref{sec:bounds for small |x_0|} and \ref{sec:bounds for large |x_0|}.
\end{proof}

\textbf{Remark.} The case that both $|x_0|$ and $T$ are large is not clear even if 
$\gamma=0$. This is because $\mathcal{J}(A)=\Theta(|x_0|/T)$ for $|x_0|\gg T$, and  
$\mathcal{J}(A)=\Theta(x_0^2/T)$ for $|x_0|\ll T$; when both $x_0$ and $T$ are large, 
$\mathcal{J}(A)$ is the mixture of $\Theta(|x_0|/T)$ and $\Theta(x_0^2/T)$ so it is 
difficult to estimate the bounds.

In the rest of this subsection we discuss two relevant issues about this framework. We first show that the 
given kernel $\Phi^{l_c}_{L_0}$ in Assumption \ref{asmp:conditions of Phi} is indeed Hilbert-Schmidt from 
$L^2$ to $H^1$.
\begin{lemma}
	\label{lma:sifficient condition for Hilbert-Schmidt Phi}
	If $\phi_0$ and $\phi_1$ are both in $L^1\cap H^k$ for $k\geq 1$, then the kernel $\Phi^{l_c}_{L_0}$ of 
	the form (\ref{eq:covariance kernel Phi}) is Hilbert-Schmidt from $L^2$ to $H^k$.
\end{lemma}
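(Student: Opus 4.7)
The plan is to verify the Hilbert--Schmidt criterion stated just before the lemma, namely that
\[
\sum_{j=0}^k \|\partial_x^j \Phi^{l_c}_{L_0}(\cdot,\cdot)\|_{L^2(\RR\times\RR)}^2 < \infty,
\]
and to do so by a direct computation using the product structure of $\Phi^{l_c}_{L_0}$. Since the $x$-dependence of $\Phi^{l_c}_{L_0}(x,x')$ enters through both $\phi_0(x/L_0)$ and $\phi_1((x-x')/l_c)$, the first step is to apply the Leibniz rule to obtain
\[
\partial_x^j \Phi^{l_c}_{L_0}(x,x')
= \sigma \sum_{i=0}^{j} \binom{j}{i} \frac{1}{L_0^{i}} \phi_0^{(i)}\!\Big(\frac{x}{L_0}\Big)\, \frac{1}{l_c^{j-i+1}} \phi_1^{(j-i)}\!\Big(\frac{x-x'}{l_c}\Big).
\]

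Next I would bound $\|\partial_x^j \Phi^{l_c}_{L_0}\|_{L^2(\RR\times\RR)}$ by the triangle inequality in $L^2(\RR\times\RR)$, reducing the problem to estimating each summand separately. For a fixed pair $(i,j-i)$, the integrand factors and can be rewritten via the change of variables $y = x/L_0$, $z = (x-x')/l_c$, whose Jacobian is $L_0 l_c$. This decouples the double integral into
\[
\iint \Big| \phi_0^{(i)}(x/L_0)\, \phi_1^{(j-i)}((x-x')/l_c) \Big|^2 dx\,dx'
= L_0\, l_c\, \|\phi_0^{(i)}\|_{L^2(\RR)}^2\, \|\phi_1^{(j-i)}\|_{L^2(\RR)}^2.
\]
Since $\phi_0,\phi_1 \in H^k$, all derivatives $\phi_0^{(i)}$ and $\phi_1^{(j-i)}$ for $0 \leq i \leq j \leq k$ lie in $L^2(\RR)$, so each summand is finite. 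Summing the resulting finite collection of bounds over $i$ and then over $j = 0,1,\dots,k$ yields the required finiteness.

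There is really no main obstacle here; the argument is essentially a bookkeeping exercise in the Leibniz rule combined with a scaling change of variables. The only mildly subtle point is to observe that the $L^1$ hypothesis on $\phi_0,\phi_1$ plays no role for this particular conclusion (it is used elsewhere in Assumption \ref{asmp:conditions of Phi} to ensure pointwise-defined Fourier transforms), so the proof genuinely uses only the $H^k$ part of the hypothesis. The explicit dependence on $L_0$ and $l_c$ in the final bound will also be useful later when one wants to track how the Hilbert--Schmidt norm of $\Phi^{l_c}_{L_0}$ scales with the two geometric parameters, but for the qualitative statement of the lemma it suffices that the sum is finite.
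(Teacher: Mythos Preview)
Your proof is correct and takes a genuinely different route from the paper's. The paper computes the two-dimensional Fourier transform of $\partial_x^j\Phi(x,x')$, obtaining $(i\xi)^j\hat{\phi}_1(-\xi')\hat{\phi}_0(\xi+\xi')$, and then argues that this lies in $L^2(\RR\times\RR)$ when $\phi_0,\phi_1\in H^j$ (after a change of variables $\eta=\xi+\xi'$ and the elementary inequality $|\xi|^{2j}\lesssim |\xi-\eta|^{2j}+|\eta|^{2j}$). Your direct approach---Leibniz rule followed by the affine change of variables $(x,x')\mapsto(x/L_0,(x-x')/l_c)$---is more elementary, avoids Fourier transforms entirely, and yields the explicit scaling factor $L_0\,l_c$ in front of each term, which is a small bonus. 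Your remark that the $L^1$ hypothesis is not actually used here is also accurate; in the paper's argument the $L^1$ assumption is implicitly invoked to make the intermediate pointwise Fourier identities (e.g.\ $\mathcal{F}_{\xi'}\{\phi_1(x-x')\}=e^{ix\xi'}\hat{\phi}_1(-\xi')$) valid, whereas your real-variable computation bypasses this entirely.
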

\begin{proof}
	See Appendix \ref{pf:sifficient condition for Hilbert-Schmidt Phi}.
\end{proof}

The other issue is that $A$ has no interior and therefore $\mathcal{J}(\mathring{A})=\infty$, which gives a 
trivial lower bound in the large deviation framework; to avoid this triviality, a rigorous way is  to 
consider instead the (closed) rare event $A_\delta$ with a small $\delta>0$:
\begin{equation}
	A_\delta = \{u\in\mathcal{E}^1\mbox{ such that } u(0,\cdot)=U,\,  
	\|u(T,\cdot)-U(\cdot-\gamma T - x_0)\|_{H^1}\leq\delta\}.
\end{equation}
By Proposition \ref{prop:ldp}, we have 
\begin{equation}
	-\mathcal{J}(\mathring{A_\delta})
	\leq \liminf_{\eps\to 0} \eps^2 \log\PP(u^\eps\in A_\delta)
	\leq \limsup_{\eps\to 0} \eps^2 \log\PP(u^\eps\in A_\delta)
	\leq -\mathcal{J}(A_\delta).
\end{equation}
In addition, the following lemma shows that $\mathcal{J}(A_\delta)$ converges to $\mathcal{J}(A)$ as 
$\delta\to 0$.
\begin{lemma}
	\label{lma:lower bound of J(A_delta)}
	By definition $\mathcal{J}(A_\delta)$ is a decreasing function with $\delta$ and 
	bounded from above by $\mathcal{J}(A)$. In addition, 
	\[
		\lim_{\delta\to 0}\mathcal{J}(A_\delta) = \mathcal{J}(A).
	\]
\end{lemma}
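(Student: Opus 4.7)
The plan is to first get the easy half (monotonicity and the upper bound) by a direct set-inclusion argument, and then prove the convergence by invoking the fact that $I$ is a \emph{good} rate function in $\mathcal{E}^1$ (Proposition \ref{prop:ldp}), i.e.\ lower semi-continuous with compact sublevel sets.

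For the monotonicity part, I observe that for $0<\delta_1\le\delta_2$ we have $A_{\delta_1}\subseteq A_{\delta_2}$, so the infimum of $I$ over $A_{\delta_1}$ is at least that over $A_{\delta_2}$, i.e.\ $\mathcal{J}(A_{\delta_1})\ge\mathcal{J}(A_{\delta_2})$. Moreover, every $u\in A$ satisfies $\|u(T,\cdot)-U(\cdot-\gamma T-x_0)\|_{H^1}=0\le\delta$, hence $A\subset A_\delta$ and $\mathcal{J}(A_\delta)\le\mathcal{J}(A)$. This already gives the existence of $L:=\lim_{\delta\to 0}\mathcal{J}(A_\delta)\le\mathcal{J}(A)$.

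For the reverse inequality $L\ge\mathcal{J}(A)$, I would assume $L<\infty$ (otherwise there is nothing to prove), pick $\delta_n\downarrow 0$ together with near-minimizers $u_n\in A_{\delta_n}$ satisfying $I(u_n)\le\mathcal{J}(A_{\delta_n})+1/n$, so that $I(u_n)\to L$. Because $I$ is a good rate function on $\mathcal{E}^1$, the sublevel set $\{I\le L+1\}$ is compact, and so $(u_n)$ admits a subsequence converging to some $u^*\in\mathcal{E}^1$. Lower semi-continuity gives $I(u^*)\le\liminf_n I(u_n)=L$. Since convergence in $\mathcal{E}^1$ means, by definition of that space, convergence of $u_n-U$ in $\mathcal{C}([0,T],H^1(\mathbb{R}))$, we get $u_n(t,\cdot)\to u^*(t,\cdot)$ in $H^1$ uniformly in $t\in[0,T]$; in particular $u^*(0,\cdot)=U$ and
\[
\|u^*(T,\cdot)-U(\cdot-\gamma T-x_0)\|_{H^1}\le\lim_n \|u_n(T,\cdot)-U(\cdot-\gamma T-x_0)\|_{H^1}\le\lim_n\delta_n=0.
\]
Hence $u^*\in A$, which yields $\mathcal{J}(A)\le I(u^*)\le L$ and concludes the proof.

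The main obstacle is the compactness step: one must genuinely use that the rate function is \emph{good} in the $\mathcal{E}^1$ topology, not merely lower semi-continuous, in order to extract a convergent subsequence of near-minimizers from a non-compact set. Once this is in place, the rest is just continuity of the trace map $u\mapsto u(T,\cdot)$ from $\mathcal{E}^1$ to $H^1(\mathbb{R})$, which is immediate from the definition of $\mathcal{E}^1$, so that the limit $u^*$ inherits the exact boundary condition defining $A$.
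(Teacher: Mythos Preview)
Your proof is correct and follows essentially the same approach as the paper's: pick near-minimizers $u_n\in A_{\delta_n}$, use that $I$ is a good rate function on $\mathcal{E}^1$ to extract a convergent subsequence, and then combine lower semicontinuity with the continuity of the terminal trace to conclude that the limit lies in $A$. Your write-up is in fact more explicit than the paper's, which leaves the verification that $u^*\in A$ and the handling of the case $L=\infty$ implicit.
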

\begin{proof}
	See Appendix \ref{pf:lower bound of J(A_delta)}.
\end{proof}

By Lemma \ref{lma:lower bound of J(A_delta)} and the fact that 
$\mathcal{J}(\mathring{A_\delta})\leq\mathcal{J}(A)$,
\begin{align*}
	-\mathcal{J}(A) \leq -\mathcal{J}(\mathring{A_\delta})
	&\leq \liminf_{\eps\to 0} \eps^2 \log\PP(u^\eps\in A_\delta)\\
	&\leq \limsup_{\eps\to 0} \eps^2 \log\PP(u^\eps\in A_\delta)
	\leq -\mathcal{J}(A_\delta)
	\overset{\delta\to 0}{\to} -\mathcal{J}(A).
\end{align*}
Namely, for $\eps$ and $\delta$ small, we have 
\[
	\PP(u^\eps\in A_\delta) \sim \exp\left(-\frac{1}{\eps^2}\mathcal{J}(A)\right),
\]
and it thus suffices to consider the bounds for $\mathcal{J}(A)$.

{\bf Remark.}
We can compare the results stated in Theorem \ref{thm:main}, valid under 
Assumption \ref{asmp:conditions of Phi}, and the ones stated in Proposition \ref{eq:Prare1}. First we can 
check that $A \subset B$. Second we can anticipate that in fact $B$ is not much larger than $A$ because it 
seems reasonable that the most likely paths that achieve ${\mathcal C}[u](T) \geq \gamma T + x_0$ should be 
of the stable form $U(x-\gamma T-x_0)$ at time $t=T$. This conjecture can indeed be verified as we now show.
Note that
$$
	\iint C(x,x') dx dx' = \int \Big[\int \Phi(x,x') dx \Big]^2 dx'   .
$$
If the kernel $\Phi^{l_c}_{L_0}$ is of the form (\ref{eq:covariance kernel Phi}), then we find
$$
	\iint C(x,x') dx dx' 
	= \frac{\sigma^2L_0}{2\pi} \int |\hat{\phi}_1(\frac{l_c}{L_0} \xi)|^2 |\hat{\phi}_0(\xi)|^2  d\xi  .
$$
By assuming $l_c \ll L_0$ (and $\hat{\phi}_1(0)=1$) as in Assumption \ref{asmp:conditions of Phi}, then this 
simplifies to
$$
	\iint C(x,x') dx dx' \simeq  \sigma^2L_0  \int   \phi_0(x)^2  dx  .
$$
Therefore, from Theorem \ref{thm:main} (in which $L_0=\gamma T+x_0+C_0$), we get 
$$
	 \iint C(x,x') dx dx' 
	=\begin{cases}
		\Theta(x_0),\quad &\text{for $x_0$ large and $T$ small or fixed,}\\
		\Theta(1),\quad     &\text{for $x_0$ and $T$ small or fixed,}
	\end{cases}
$$
which gives with (\ref{eq:Prare1})
$$
	\PP( u^\eps \in B) \sim \exp \left( - \frac{1}{\eps^2} {\mathcal J}(B) \right)
$$
with
$$
	{\mathcal J}(B)
	=\begin{cases}
		\Theta(x_0^2),\quad   &\text{for $|x_0|$ small and any fixed $T$,}\\
		\Theta(x_0^2/T),\quad &\text{for $|x_0|$ small and $T$ small,}\\
		\Theta(|x_0|),\quad   &\text{for $|x_0|$ large and any fixed $T$,}\\
		\Theta(|x_0|/T),\quad &\text{for $|x_0|$ large and $T$ small,}\\
		\Theta(1/T),\quad     &\text{for $T$ small and any fixed $x_0$,}
	\end{cases}
$$
as in (\ref{eq:upper and lower bounds for J(A), general gamma}). We therefore recover the same asymptotic 
scales as in Theorem \ref{thm:main}.
\subsection{Proof of Theorem \ref{thm:main} for small $|x_0|$}
\label{sec:bounds for small |x_0|}

Here we prove the bounds for $\mathcal{J}(A)$ in (\ref{eq:upper and lower bounds for J(A), general gamma}) 
and (\ref{eq:upper and lower bounds for J(A), gamma=0}) when $|x_0|$ is small and the bound $\Theta(1/T)$ 
for fixed $x_0$. We first consider the upper bounds. By definition, $\mathcal{J}(A)\leq I(u)$ for any 
$u\in A$; therefore the idea to obtain the upper bound is to find good test functions $u$. 

For $|x_0|$ small, we use the linear shifted profile as the test function:
\[
	 v(t,x)=U(x- (t/T)(\gamma T+x_0)) .
\]
If the kernel $\Phi^{l_c}_{L_0}\approx\sigma^{-2}$ in the region of interest, then 
\begin{equation}
	\label{eq:approximate I(v)}
	\mathcal{J}(A)=\inf_{u\in A}I(u)\leq I(v) 
	\lesssim \frac{1}{2\sigma^2}\int_0^T \|v_t + F(v)_x - Dv_{xx}\|_{L^2}^2 dt.
\end{equation}
We show (\ref{eq:approximate I(v)}) rigorously by the following lemmas:
\begin{lemma}
	\label{lma:approximation of phi_0 for small |x_0|}
	Given the traveling wave $U(x)$ in (\ref{eq:rare event of the wave dispacement}), there exists 
	$C_v\geq 1$ such  that, uniformly in $x_0$ and $T$ and for all $t\in[0,T]$,
	\begin{equation}
		\label{eq:L^2 approximation of phi_0 for small |x_0|}
		\|[\phi_0^{-1}(\cdot/L_0)-1]U_x(\cdot-(t/T)(\gamma T+x_0))\|_{L^2}^2 \leq 1.
	\end{equation}
	where $L_0 = \gamma T + x_0 + C_v$.
\end{lemma}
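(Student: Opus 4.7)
The plan is to exploit two structural facts: that the cutoff $\phi_0^{-1}-1$ vanishes identically on $[-L_0,L_0]$, and that $U_x$ decays exponentially at $\pm\infty$. By Assumption~\ref{asmp:conditions of Phi}(5), $\phi_0(y)=1$ on $[-1,1]$, so $\phi_0^{-1}(x/L_0)-1$ vanishes for $|x|\leq L_0$, and the $L^2$-norm in (\ref{eq:L^2 approximation of phi_0 for small |x_0|}) reduces to an integral over $\{|x|\geq L_0\}$, which I split into the two half-lines.

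Next, I would record the exponential decay of $U_x$. Linearizing the profile ODE $DU_x = F(U)-F(u_-)-\gamma(U-u_-)$ near $u_\pm$, the Lax condition (\ref{hyp1b}) gives strictly positive rates $(F'(u_-)-\gamma)/D$ and $(\gamma-F'(u_+))/D$, which yields constants $K,\lambda>0$ depending only on $F$, $D$, $u_\pm$, $\gamma$ such that $|U_x(z)|\leq K e^{-\lambda|z|}$ for all $z\in\RR$.

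I then handle the half-line $\{x\geq L_0\}$ (the other half is symmetric). The key move is the substitution $z=x-s(t)$, where $s(t)=(t/T)(\gamma T+x_0)\in[0,\gamma T+x_0]=[0,L_0-C_v]$. This gives $z\geq C_v$ and
\[
\frac{x}{L_0}=\frac{z+s(t)}{L_0}\leq \frac{z}{L_0}+1\leq z+1,
\]
where the last inequality uses $L_0\geq C_v\geq 1$ and $z\geq C_v\geq 1$. By Assumption~\ref{asmp:conditions of Phi}(4), $\phi_0^{-1}$ has at most polynomial growth at infinity, so I can fix a polynomial $P$ with non-negative coefficients such that $\phi_0^{-1}(y)-1\leq P(|y|)$ for $|y|\geq 1$. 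Combining, the contribution from the right half-line is bounded by
\[
\int_{C_v}^\infty P(z+1)^2\, K^2 e^{-2\lambda z}\,dz,
\]
and this bound depends only on $U$, $F$, $D$ and $\phi_0$, not on $x_0$, $T$, or $t$.

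The same estimate applies on $\{x\leq -L_0\}$ after the analogous translation. Since the remaining integral is of the form $\int_{C_v}^\infty (\text{polynomial})\cdot(\text{exponential decay})$, it tends to $0$ as $C_v\to\infty$, so I simply choose $C_v\geq 1$ large enough (depending only on $U$, $F$, $D$, $\phi_0$) that the sum of the two half-line contributions is $\leq 1$, yielding the claimed bound uniformly in $x_0$, $T$, and $t$. The only nontrivial obstacle is precisely this uniformity: because $L_0$ itself depends on $x_0$ and $T$, a naive bound would blow up with $L_0$. The substitution $z=x-s(t)$ absorbs all of this dependence, and the specific choice $L_0=\gamma T+x_0+C_v$ is what guarantees $z\geq C_v$ after the change of variables; this is the point of including the buffer $C_v$ in the definition of $L_0$.
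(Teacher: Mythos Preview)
Your proof is correct and follows essentially the same route as the paper's: reduce to $|x|\geq L_0$, shift by $s(t)=(t/T)(\gamma T+x_0)$, and use $L_0\geq C_v\geq 1$ together with the polynomial growth of $\phi_0^{-1}$ and the decay of $U_x$ to obtain a tail integral independent of $x_0$, $T$, $t$ that can be made $\leq 1$ by choosing $C_v$ large. The only cosmetic difference is that the paper invokes the monotonicity of $\phi_0^{-1}$ on each half-line to bound $\phi_0^{-1}(x/L_0+s(t)/L_0)$ by $\phi_0^{-1}(x+1)$ on the right and $\phi_0^{-1}(x)$ on the left, whereas you use a polynomial envelope $P$ and explicit exponential decay of $U_x$; the arguments are otherwise identical.
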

\begin{proof}
	See Appendix \ref{pf:approximation of phi_0 for small |x_0|}.
\end{proof}

\begin{lemma}
	\label{lma:approximation of phi_1 for small |x_0|}
	Let $v(t,x)=U(x-(t/T)(\gamma T+x_0))$. Given $L_0=x_0+\gamma T + C_v$ in Lemma 
	\ref{lma:approximation of phi_0 for small |x_0|}, $\Phi^{l_c}_{L_0}$ satisfying Assumption 
	\ref{asmp:conditions of Phi}, and $h^{l_c}_{L_0}$ defined by 
	\begin{equation}
		\label{eq:driving force expression of v}
		v_t + F(v)_x -Dv_{xx} = \Phi^{l_c}_{L_0} h^{l_c}_{L_0},
	\end{equation}
	then 
	\begin{equation}
		\label{eq:L^2 approximation of phi_1 for small |x_0|}
		\int_0^T \|h^{l_c}_{L_0}(t,\cdot)\|_{L^2}^2 dt \overset{l_c\to 0}{\to}
		\frac{1}{\sigma^2} \int_0^T \|(v_t+F(v)_x-Dv_{xx})/\phi_0(\cdot /L_0)\|_{L^2}^2 dt.
	\end{equation}
\end{lemma}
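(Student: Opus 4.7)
My plan is to convert the operator equation $\Phi^{l_c}_{L_0}h^{l_c}_{L_0}=g$, where $g(t,x):=v_t+F(v)_x-Dv_{xx}$, into an algebraic relation in the spatial Fourier variable and then evaluate the $L^2$-norm by Parseval before passing $l_c\to 0$ through dominated convergence. The first step is to exploit the tensor-product structure: for each $t$ and any $h(t,\cdot)\in L^2(\RR)$,
\[
  (\Phi^{l_c}_{L_0}h)(t,x) = \sigma\,\phi_0(x/L_0)\,\bigl(\phi_1^{l_c}\ast h(t,\cdot)\bigr)(x), \qquad \phi_1^{l_c}(y):=\frac{1}{l_c}\phi_1(y/l_c),
\]
so that the family $\phi_1^{l_c}$ is an approximate identity as $l_c\to 0$ (since $\int\phi_1=\hat\phi_1(0)=1$). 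Because $\phi_0>0$, dividing by $\sigma\phi_0(\cdot/L_0)$ yields $\phi_1^{l_c}\ast h(t,\cdot)=G(t,\cdot)$ with $G(t,x):=g(t,x)/[\sigma\phi_0(x/L_0)]$. Taking the spatial Fourier transform and using that $\hat\phi_1$ is nowhere zero (Assumption \ref{asmp:conditions of Phi}), one obtains the explicit formula $\hat h^{l_c}_{L_0}(t,\xi)=\hat G(t,\xi)/\hat\phi_1(l_c\xi)$, which simultaneously establishes well-posedness of $h^{l_c}_{L_0}$.

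Applying Plancherel in $\xi$ and integrating in time gives
\[
  \int_0^T\|h^{l_c}_{L_0}(t,\cdot)\|_{L^2}^2\,dt = \frac{1}{2\pi}\int_0^T\!\int_\RR\frac{|\hat G(t,\xi)|^2}{|\hat\phi_1(l_c\xi)|^2}\,d\xi\,dt.
\]
For each $\xi$, by continuity of $\hat\phi_1$ and $\hat\phi_1(0)=1$, the integrand converges to $|\hat G(t,\xi)|^2$, whose $\xi$-integral is $2\pi\|G(t,\cdot)\|_{L^2}^2=(2\pi/\sigma^2)\|g(t,\cdot)/\phi_0(\cdot/L_0)\|_{L^2}^2$, which is exactly the claimed right-hand side.

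The main obstacle is rigorously justifying the exchange of limit and integral. Assumption \ref{asmp:conditions of Phi} provides constants $C,m$ with $|\hat\phi_1(\xi)|^{-1}\le C(1+|\xi|)^m$, so $|\hat\phi_1(l_c\xi)|^{-2}\le C^2(1+|\xi|)^{2m}$ uniformly for $l_c\le 1$. Dominated convergence then reduces to placing $G$ in $L^2([0,T],H^m(\RR))$ with norm uniformly bounded in $t$. To verify this I would substitute $v(t,x)=U(x-\tilde\gamma t)$ with $\tilde\gamma:=(\gamma T+x_0)/T$ and use the traveling-wave ODE $DU_x=F(U)-F(u_-)-\gamma(U-u_-)$ to simplify $g(t,x)=(\gamma-\tilde\gamma)U_x(x-\tilde\gamma t)$. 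Iterated differentiation of this ODE bootstraps the spatial regularity and decay of $U_x$ (using $F\in\mathcal{C}^2$ with bounded derivatives and the exponential decay of $U_x$ at $\pm\infty$ coming from orbital stability), and combining this with the $\mathcal{C}^\infty$ smoothness of $\phi_0$ and the at-most-polynomial growth of $1/\phi_0$ ensures the quotient $G(t,\cdot)$ inherits these properties, which supplies the uniform $H^m$ bound needed to conclude.
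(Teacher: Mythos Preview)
Your proposal is correct and takes essentially the same route as the paper's proof: both divide by $\sigma\phi_0(\cdot/L_0)$, take the spatial Fourier transform to obtain $\hat h^{l_c}_{L_0}(t,\xi)=\hat G(t,\xi)/\hat\phi_1(l_c\xi)$, apply Parseval, and pass to the limit $l_c\to 0$ using the at-most-polynomial growth of $1/\hat\phi_1$. The only cosmetic difference is that the paper packages the regularity/decay of $G=(v_t+F(v)_x-Dv_{xx})/\phi_0(\cdot/L_0)$ by asserting $G\in\mathcal{C}([0,T],\mathcal{S}(\RR))$, whereas you unpack this as a uniform $H^m$ bound via the traveling-wave ODE; these amount to the same thing.
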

\begin{proof}
	See Appendix \ref{pf:approximation of phi_1 for small |x_0|}.
\end{proof}

Now we are ready to prove (\ref{eq:approximate I(v)}). 
By Lemma \ref{lma:approximation of phi_1 for small |x_0|}, we have
\begin{equation*}
	I(v) \leq \frac{1}{2} \int_0^T \|h^{l_c}_{L_0}(t,\cdot)\|_{L^2}^2 dt
	\overset{l_c\to 0}\to\frac{1}{2\sigma^2} \int_0^T \|(v_t+F(v)_x-Dv_{xx})/\phi_0(\cdot /L_0)\|_{L^2}^2 dt.
\end{equation*}
Then the rest is to compute $\int_0^T\|(v_t+F(v)_x-Dv_{xx})/\phi_0(\cdot /L_0)\|_{L^2}^2 dt$. Note that
\[
	\big(v_t+F(v)_x-Dv_{xx}\big)(t,x) = \frac{x_0}{T} U_x(x-(t/T)(\gamma T+x_0)).
\]
With $L_0 = \gamma T + x_0 + C_v$ given in Lemma \ref{lma:approximation of phi_0 for small |x_0|},
we have 
$$
\|U_x(\cdot-(t/T)(\gamma T+x_0))/\phi_0(\cdot /L_0)\|_{L^2}^2  \leq 2 \|U_x(\cdot-(t/T)(\gamma T+x_0))\|_{L^2}^2 + 2,
$$
and therefore
\begin{align*}
	&\int_0^T \|(v_t+F(v)_x-Dv_{xx})/\phi_0(\cdot /L_0)\|_{L^2}^2 dt \\
	&\quad =\frac{x_0^2}{T^2}\int_0^T\|U_x(\cdot-(t/T)(\gamma T+x_0))/\phi_0(\cdot /L_0)\|_{L^2}^2dt\\
	&\quad \leq\frac{x_0^2}{T^2}\int_0^T\big( 2 \|U_x(\cdot-(t/T)(\gamma T+x_0))\|_{L^2}^2 + 2\big)dt
	= \frac{x_0^2}{T} \big(2 \|U_x\|_{L^2}^2 + 2\big).
\end{align*}
Therefore, we have the asymptotic upper bounds for $\mathcal{J}(A)$:
\begin{equation}
	\label{eq:upper bounds for J(A) by using I(v)}
	\mathcal{J}(A) \leq I(v) 
	\overset{l_c\to 0}{\leq} \frac{1}{2\sigma^2}\frac{x_0^2}{T}\big( 2\|U_x\|_{L^2}^2 + 2\big).
\end{equation}

Now we find the lower bounds for $\mathcal{J}(A)$. 
Let us first denote by $1$ the function identically equal to $1$, 
assume that $(\Phi^{l_c}_{L_0})^T 1\in L^2$, 
and find the general form of the lower bound.
\begin{proposition}
	\label{prop:general form of lower bounds}
	Given $(\Phi^{l_c}_{L_0})^T 1\in L^2$, for any $u\in A$,
	\begin{equation}
		\label{eq:general form of lower bounds}
		I(u)\geq \frac{1}{2}T^{-1}\|(\Phi^{l_c}_{L_0})^T 1\|_{L^2}^{-2}\left(\int[U(x-x_0)-U(x)]dx\right)^2.
	\end{equation}
\end{proposition}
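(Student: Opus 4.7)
The plan is to pair the control equation with the constant test function $1$ in order to extract a scalar identity from the boundary data, and then close the estimate with two applications of Cauchy--Schwarz. Fix $u \in A$ together with any admissible control $h \in L^2([0,T], L^2(\mathbb{R}))$ realizing $u = \mathcal{H}[h]$; the goal is to bound $\int_0^T \|h(t, \cdot)\|_{L^2}^2 \, dt$ from below by a constant depending only on $U$, $x_0$, $T$ and the kernel, and then take the infimum over $h$.

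First, I would integrate the controlled equation $u_t + F(u)_x = D u_{xx} + \Phi^{l_c}_{L_0} h$ over $x \in \mathbb{R}$. The diffusive term vanishes as a boundary term, while the flux term contributes $F(u_+) - F(u_-) = \gamma(u_+ - u_-)$ by Rankine--Hugoniot (\ref{eq:Rankine-Hugoniot}), giving
\begin{equation*}
\frac{d}{dt} \int_{\mathbb{R}} [u(t,x) - U(x)] \, dx = -\gamma(u_+ - u_-) + \int_{\mathbb{R}} \Phi^{l_c}_{L_0} h(t,x) \, dx.
\end{equation*}
Integrating in $t$ from $0$ to $T$ and inserting the boundary data $u(0, \cdot) = U$ and $u(T, \cdot) = U(\cdot - \gamma T - x_0)$, the elementary identity $\int [U(\cdot - a) - U] \, dx = a(u_- - u_+)$ (which follows since $U$ runs monotonically from $u_-$ to $u_+$) cancels the $\gamma T$ contribution exactly and leaves the clean identity
\begin{equation*}
\int_0^T \int_{\mathbb{R}} \Phi^{l_c}_{L_0} h \, dx \, dt = x_0 (u_- - u_+) = \int_{\mathbb{R}} [U(x - x_0) - U(x)] \, dx.
\end{equation*}

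Next, I would rewrite the spatial integral via the adjoint as $\int \Phi^{l_c}_{L_0} h(t, \cdot) \, dx = \langle h(t, \cdot), (\Phi^{l_c}_{L_0})^T 1 \rangle_{L^2}$, which is well defined precisely because of the hypothesis $(\Phi^{l_c}_{L_0})^T 1 \in L^2$. Cauchy--Schwarz in space yields $|\langle h(t, \cdot), (\Phi^{l_c}_{L_0})^T 1 \rangle_{L^2}| \leq \|h(t, \cdot)\|_{L^2} \|(\Phi^{l_c}_{L_0})^T 1\|_{L^2}$, and a further Cauchy--Schwarz in time produces
\begin{equation*}
\left( \int_0^T \int \Phi^{l_c}_{L_0} h \, dx \, dt \right)^2 \leq T \, \|(\Phi^{l_c}_{L_0})^T 1\|_{L^2}^2 \int_0^T \|h(t, \cdot)\|_{L^2}^2 \, dt.
\end{equation*}
Combining with the previous identity, dividing, multiplying by $\tfrac{1}{2}$, and taking the infimum over admissible $h$ in the definition (\ref{eq:rate function}) of $I(u)$ yields the claimed lower bound.

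The only technical obstacle is justifying the pairing against $1 \notin L^2(\mathbb{R})$ when integrating the PDE. I would handle this by first testing against a smooth cutoff $\chi_R$ of the indicator of $[-R, R]$ and then letting $R \to \infty$. The flux boundary term $\int F(u) \chi_R'\, dx$ converges to $F(u_+) - F(u_-)$ because $F'$ is bounded (Assumption \ref{asmp:saturated fluxes}) and $u - U \in \mathcal{C}([0,T], H^1(\mathbb{R}))$ forces $u(t, \cdot) \to u_\pm$ at $\pm \infty$; the diffusive term $D \int u_x \chi_R' \, dx$ vanishes in the limit by the same $H^1$ regularity applied to $u - U$; and the forcing contribution passes to the limit since $(\Phi^{l_c}_{L_0})^T \chi_R \to (\Phi^{l_c}_{L_0})^T 1$ in $L^2$ by dominated convergence, using $(\Phi^{l_c}_{L_0})^T 1 \in L^2$ as dominating envelope. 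Once these limits are in place, the remainder of the argument is purely algebraic.
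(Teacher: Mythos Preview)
Your proof is correct and follows essentially the same approach as the paper: pair the control equation against the constant function $1$ (using the adjoint and the hypothesis $(\Phi^{l_c}_{L_0})^T 1\in L^2$), invoke Rankine--Hugoniot to handle the flux boundary terms, and close with Cauchy--Schwarz in space and then in time. The only cosmetic difference is ordering: the paper applies the spatial Cauchy--Schwarz pointwise in $t$ (via the variational formula $\|h\|^2=\sup_f \langle h,f\rangle^2/\|f\|^2$) before integrating in time, whereas you first integrate the scalar identity over $[0,T]$ and then apply Cauchy--Schwarz; your cutoff discussion also makes explicit what the paper leaves implicit.
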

\begin{proof}
	See Appendix \ref{pf:general form of lower bounds}.
\end{proof}

Then we show that $(\Phi^{l_c}_{L_0})^T 1$ is indeed in $L^2$.
\begin{lemma}
	\label{lma:Phi^T 1}
	Let $\Phi^{l_c}_{L_0}$ satisfy Assumption \ref{asmp:conditions of Phi}. 
	Then $(\Phi^{l_c}_{L_0})^T 1(x)$ is in $L^2(\RR)$, and 
	$\|(\Phi^{l_c}_{L_0})^T 1\|_{L^2}^2\to \sigma^2 L_0 \|\phi_0\|_{L^2}^2$ as $l_c\to 0$.
\end{lemma}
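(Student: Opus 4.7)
\textbf{Proof plan for Lemma \ref{lma:Phi^T 1}.} The plan is to compute $(\Phi^{l_c}_{L_0})^T 1$ as a convolution, pass to Fourier side via Plancherel, and conclude by dominated convergence.

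First, since $\Phi^{l_c}_{L_0}(x,x') = \sigma\phi_0(x/L_0)\frac{1}{l_c}\phi_1((x-x')/l_c)$, I would write
\[
	(\Phi^{l_c}_{L_0})^T 1 (x') = \int \sigma\phi_0(x/L_0)\frac{1}{l_c}\phi_1\Big(\frac{x-x'}{l_c}\Big)dx
	= \sigma\big(g * \check h_{l_c}\big)(x'),
\]
where $g(x)=\phi_0(x/L_0)$, $h_{l_c}(y) = \frac{1}{l_c}\phi_1(y/l_c)$, and $\check h_{l_c}(y)=h_{l_c}(-y)$, after the change of variables $y = x-x'$. Next, I would apply Plancherel with the convention used earlier in the paper, namely $\|f\|_{L^2}^2 = \frac{1}{2\pi}\|\hat f\|_{L^2}^2$. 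Using $\hat g(\xi)=L_0\hat\phi_0(L_0\xi)$ and $\widehat{\check h_{l_c}}(\xi)$ having modulus $|\hat\phi_1(l_c\xi)|$ (valid since $\phi_1$ is real), this gives
\[
	\|(\Phi^{l_c}_{L_0})^T 1\|_{L^2}^2
	= \frac{\sigma^2}{2\pi}\int |L_0\hat\phi_0(L_0\xi)|^2 |\hat\phi_1(l_c\xi)|^2 d\xi
	= \frac{\sigma^2 L_0}{2\pi}\int |\hat\phi_0(\eta)|^2 \Big|\hat\phi_1\Big(\frac{l_c}{L_0}\eta\Big)\Big|^2 d\eta,
\]
after substituting $\eta = L_0\xi$. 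This is already the formula for $\iint C\,dx\,dx'$ that appears in the remark preceding the lemma, which is reassuring.

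To check $L^2$-boundedness, I note that $\phi_0 \in L^2$ (since $\phi_0 \in L^1\cap H^1 \subset L^2$ by Assumption \ref{asmp:conditions of Phi}), so $\hat\phi_0\in L^2$; and $\phi_1\in L^1$ gives $\|\hat\phi_1\|_{L^\infty}\leq\|\phi_1\|_{L^1}<\infty$. Hence the integrand is in $L^1$ and $(\Phi^{l_c}_{L_0})^T 1 \in L^2(\RR)$, uniformly in $l_c$.

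For the limit as $l_c\to 0$, dominated convergence is a natural tool: the integrand is pointwise bounded by $\|\hat\phi_1\|_{L^\infty}^2 |\hat\phi_0(\eta)|^2$, which is integrable, and pointwise $|\hat\phi_1(l_c\eta/L_0)|^2 \to |\hat\phi_1(0)|^2 = 1$ by the normalization in Assumption \ref{asmp:conditions of Phi}. Thus
\[
	\|(\Phi^{l_c}_{L_0})^T 1\|_{L^2}^2
	\xrightarrow{l_c\to 0} \frac{\sigma^2 L_0}{2\pi}\int |\hat\phi_0(\eta)|^2 d\eta
	= \sigma^2 L_0 \|\phi_0\|_{L^2}^2,
\]
by Plancherel again. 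The main (minor) obstacle is just bookkeeping of Fourier conventions and the convolution sign, since an incorrect convention would only change constants; the substantive content is the $L^1$ dominating function for dominated convergence, which is immediate from $\phi_0 \in L^2$ and $\hat\phi_1 \in L^\infty$.
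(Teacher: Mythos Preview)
Your proposal is correct and follows essentially the same route as the paper: compute $(\Phi^{l_c}_{L_0})^T 1$ as the convolution $\sigma\,\phi_0(\cdot/L_0)*\frac{1}{l_c}\phi_1(-\cdot/l_c)$, apply Plancherel, and let $l_c\to 0$. You are in fact slightly more careful than the paper, which writes the limit without naming dominated convergence or the dominating function.
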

\begin{proof}
	See Appendix \ref{pf:Phi^T 1}.
\end{proof}

From Lemma \ref{lma:approximation of phi_0 for small |x_0|}, $L_0 = \gamma T + x_0 + C_v$, where $C_v$ is 
uniform in $x_0$ and $T$. Then for $l_c$ small the general lower bound 
(\ref{eq:general form of lower bounds}) becomes:
\begin{align}
	\label{eq:aymptotic general lower bounds for J(A) by letting l_c to 0}
	\mathcal{J}(A) 
	&\geq \frac{1}{2}T^{-1}\|(\Phi^{l_c}_{L_0})^T 1\|_{L^2}^{-2}
	\left(\int[U(x-x_0)-U(x)]dx\right)^2 \\
	&\overset{l_c\to 0}{\to}\frac{1}{2\sigma^2}\|\phi_0\|_{L^2}^{-2}T^{-1}(x_0+\gamma T+C_v)^{-1}
	\left(\int[U(x-x_0)-U(x)]dx\right)^2.\notag
\end{align}
For $|x_0|$ small, $U(x-x_0)=U(x)-x_0U_x(x)+\mathcal{O}(x_0^2)$. Then
\begin{align}
	\label{eq:lower bound with small |x_0|}
	\mathcal{J}(A) &\overset{l_c\to 0}{\geq} 
	\frac{1}{2\sigma^2}\|\phi_0\|_{L^2}^{-2}T^{-1}(x_0+\gamma T+C_v)^{-1}
	\left(\int[-x_0U_x(x)+\mathcal{O}(x_0^2)]dx\right)^2\\
	&=\frac{1}{2\sigma^2}\|\phi_0\|_{L^2}^{-2}T^{-1}(x_0+\gamma T+C_v)^{-1}
	[x_0^2(u_+-u_-)^2 + \mathcal{O}(x_0^3)]. \notag
\end{align}

To summarize this subsection, given $A$ in (\ref{eq:rare event of the wave dispacement}), we choose the 
kernel $\Phi^{l_c}_{L_0}$ with $L_0=\gamma T+x_0+C_v$, which is the support of the noise covering the region 
of interest of $A$, and then by (\ref{eq:upper bounds for J(A) by using I(v)}) and 
(\ref{eq:lower bound with small |x_0|}), $\mathcal{J}(A)$ has the following asymptotic (in the sense that 
$l_c\to 0$) bounds:
\begin{equation*}
	\mathcal{J}(A) \overset{l_c\to 0}{=}
	\begin{cases}
		\Theta(x_0^2),\quad   &\text{for $|x_0|$ small and any fixed $T$,}\\
		\Theta(x_0^2/T),\quad &\text{for $|x_0|$ small and $T$ small,}\\
		\Theta(1/T),\quad     &\text{for $T$ small and any fixed $x_0$,}
	\end{cases}
\end{equation*}
for nonzero $\gamma$, and for $\gamma=0$.
\begin{equation*}
	\mathcal{J}(A) \overset{l_c\to 0}{=}
	\begin{cases}
		\Theta(x_0^2/T),\quad &\text{for $|x_0|$ small,}\\
		\Theta(1/T),\quad     &\text{for any fixed $x_0$.}
	\end{cases}
\end{equation*}
\subsection{Proof of Theorem \ref{thm:main} for large $|x_0|$}
\label{sec:bounds for large |x_0|}

Now we consider the bounds for $\mathcal{J}(A)$ in (\ref{eq:upper and lower bounds for J(A), general gamma}) 
and (\ref{eq:upper and lower bounds for J(A), gamma=0}) when $|x_0|$ is large. We use the test function 
\begin{equation}
	\label{eq:linear interpolation of the profiles}
	w(t,x)=(1-t/T)U(x)+(t/T)U(x-\gamma T-x_0),
\end{equation}
the linear interpolation of the two profiles, and show that 
\begin{equation}
	\label{eq:approximate I(w)}
	\mathcal{J}(A)=\inf_{u\in A}I(u)\leq I(w) 
	\lesssim \frac{1}{2\sigma^2}\int_0^T \|w_t + F(w)_x - Dw_{xx}\|_{L^2}^2 dt.
\end{equation}

We also show (\ref{eq:approximate I(w)}) by two similar technical lemmas.
\begin{lemma}
	\label{lma:approximation of phi_0 for large |x_0|}
	Given the traveling wave $U(x)$ in (\ref{eq:rare event of the wave dispacement}) and $w(t,x)$ in 
	(\ref{eq:linear interpolation of the profiles}), there exists $C_w\geq 1$ such that, uniformly in $x_0$ and $T$ 
	and for all $t\in[0,T]$,
	\begin{align}
		\label{eq:L^2 approximation of phi_0 for large |x_0|}
		&\|[\phi_0^{-1}(\cdot/L_0)-1][U(\cdot-\gamma T-x_0)-U]\|_{L^2}^2 \leq 1,\\
		&\|[\phi_0^{-1}(\cdot/L_0)-1][F(w)_x -Dw_{xx}]\|_{L^2}^2 \leq 1, \notag
	\end{align}
	where $L_0 = \gamma T + x_0 + C_w$.
\end{lemma}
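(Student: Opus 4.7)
The guiding observation is that by item 5 of Assumption \ref{asmp:conditions of Phi}, $\phi_0\equiv 1$ on $[-1,1]$, so the multiplier $\phi_0^{-1}(x/L_0)-1$ vanishes identically on $[-L_0,L_0]$, and both $L^2$ norms in the statement reduce to integrals over $\{|x|>L_0\}$. On that outer region I would combine the at-most-polynomial growth of $1/\phi_0$ from item 4 with the exponential decay of $U-u_\pm$, $U_x$, and $U_{xx}$ at $\pm\infty$. The choice $L_0=\gamma T+x_0+C_w$ forces every $x$ with $|x|>L_0$ to lie at distance at least $C_w$ from both shock centers $0$ and $\gamma T+x_0$, so a single sufficiently large $C_w$ should control both tails uniformly in $(x_0,T)$.

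The exponential-decay input comes from the defining ODE $U_x=D^{-1}(F(U)-F(u_-)-\gamma(U-u_-))$: the Lax condition $F'(u_+)<\gamma<F'(u_-)$ makes $u_\pm$ hyperbolic fixed points, and linearization at each yields $|U(x)-u_+|+|U_x(x)|+|U_{xx}(x)|\leq Ce^{-\alpha x}$ for $x\geq 0$ and a symmetric bound at $-\infty$, with constants $C,\alpha>0$ depending only on the fixed data $F,D,u_\pm$.

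For the first inequality, on $x>L_0$ both $|U(x)-u_+|$ and $|U(x-\gamma T-x_0)-u_+|$ are bounded by $Ce^{-\alpha(x-\gamma T-x_0)}$, using $x-\gamma T-x_0\geq C_w\geq 1$, so their difference is exponentially small; a symmetric estimate holds for $x<-L_0$. Squaring, multiplying by the polynomial bound $(1+|x/L_0|)^{2N}$ for $(\phi_0^{-1}(x/L_0)-1)^2$ afforded by item 4, and integrating gives, after the substitution $y=x-\gamma T-x_0$ on the right tail, an upper bound of the form $C'\int_{C_w}^\infty(2+y/L_0)^{2N}e^{-2\alpha y}\,dy$, which tends to $0$ as $C_w\to\infty$ uniformly in $L_0\geq C_w$. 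Choosing $C_w$ so the total tail contribution is $\leq 1$ settles the first bound. For the second inequality, I expand $w_x=(1-t/T)U_x(x)+(t/T)U_x(x-\gamma T-x_0)$ and analogously for $w_{xx}$, and use $|F(w)_x|=|F'(w)w_x|\leq C_F|w_x|$ from Assumption \ref{asmp:saturated fluxes}. Outside $[-L_0,L_0]$ every term in $F(w)_x-Dw_{xx}$ is a convex combination of $|U_x|$ and $|U_{xx}|$ at points of distance $\geq C_w$ from the corresponding shock, so the same polynomial-times-exponential tail estimate applies and yields the bound $\leq 1$ uniformly in $t\in[0,T]$, possibly after enlarging $C_w$.

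The main obstacle is uniformity of the estimate over all $(x_0,T)$. The critical point is that $L_0=\gamma T+x_0+C_w$ itself grows with the shift, so in the substitution $y=x-\gamma T-x_0$ one has $|x/L_0|=|y/L_0+1-C_w/L_0|\leq y/L_0+1$, a function of $y$ and $C_w$ alone; the polynomial-times-exponential integrand therefore depends on $(x_0,T)$ only through this uniformly bounded factor. Hence a single $C_w$ depending solely on the fixed data $U,\phi_0,F,D$ suffices, as the lemma claims.
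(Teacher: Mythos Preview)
Your proposal is correct and follows essentially the same approach as the paper: reduce to $|x|>L_0$ using $\phi_0\equiv 1$ on $[-1,1]$, then control the tails by combining the growth of $\phi_0^{-1}$ with the decay of $U-u_\pm$ and its derivatives, and exploit $L_0\geq C_w\geq 1$ for uniformity in $(x_0,T)$. The only minor technical difference is that the paper uses the monotonicity of $\phi_0^{-1}$ (item 5 of Assumption \ref{asmp:conditions of Phi}) to bound $\phi_0^{-1}(x/L_0+\cdots)$ directly by $\phi_0^{-1}(x+1)$ or $\phi_0^{-1}(x)$, whereas you invoke the polynomial-growth bound from item 4; both routes reduce to a fixed tail integral independent of $(x_0,T)$.
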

\begin{proof}
	See Appendix \ref{pf:approximation of phi_0 for large |x_0|}.
\end{proof}

Because $w$ has the same property that $w_t+F(w)_x-Dw_{xx}\in\mathcal{C}([0,T],\mathcal{S}(\RR))$, we have 
the following lemma by replacing $v$ by $w$ in Lemma \ref{lma:approximation of phi_1 for small |x_0|}.
\begin{lemma}
	\label{lma:approximation of phi_1 for large |x_0|}
	Let $w$ in (\ref{eq:linear interpolation of the profiles}). Given $L_0=x_0+\gamma T + C_w$ in Lemma 
	\ref{lma:approximation of phi_0 for large |x_0|}, $\Phi^{l_c}_{L_0}$ satisfying Assumption 
	\ref{asmp:conditions of Phi}, and $h^{l_c}_{L_0}$ defined by 
	$w_t + F(w)_x -Dw_{xx} = \Phi^{l_c}_{L_0} h^{l_c}_{L_0}$,
	then 
	\begin{equation*}
		I(w) \leq \frac{1}{2} \int_0^T \|h^{l_c}_{L_0}(t,\cdot)\|_{L^2}^2 dt \overset{l_c\to 0}{\to}
		\frac{1}{2\sigma^2} \int_0^T \|(w_t+F(w)_x-Dw_{xx})/\phi_0(\cdot/L_0)\|_{L^2}^2 dt.
	\end{equation*}
\end{lemma}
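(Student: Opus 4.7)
The plan is to adapt verbatim the proof of Lemma \ref{lma:approximation of phi_1 for small |x_0|}: its argument used only that the source term $v_t+F(v)_x-Dv_{xx}$ belongs to $\mathcal{C}([0,T],\mathcal{S}(\RR))$ (so that Fourier inversion against the kernel is meaningful) and that the support of the source is covered by $\phi_0(\cdot/L_0)$ (an estimate here supplied by Lemma \ref{lma:approximation of phi_0 for large |x_0|} with $L_0=\gamma T+x_0+C_w$, rather than by Lemma \ref{lma:approximation of phi_0 for small |x_0|}). So the first task is to check the analogous regularity for the new source term $g(t,\cdot):=w_t(t,\cdot)+F(w)_x(t,\cdot)-Dw_{xx}(t,\cdot)$.

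To verify this, I would write $w_t=(U(\cdot-\gamma T-x_0)-U)/T$, $w_x=(1-t/T)U'(\cdot)+(t/T)U'(\cdot-\gamma T-x_0)$, $w_{xx}$ analogously, and $F(w)_x=F'(w)w_x$. Since $U$ is a stable viscous profile, iterating the ODE $U_x=D^{-1}(F(U)-F(u_-)-\gamma(U-u_-))$ shows that $U'$ and $U''$ decay exponentially at $\pm\infty$, so $w_x,w_{xx}\in\mathcal{C}([0,T],\mathcal{S}(\RR))$. The difference $U(\cdot-\gamma T-x_0)-U(\cdot)$ decays at both infinities because each term approaches $u_\pm$ exponentially there, hence $w_t\in\mathcal{C}([0,T],\mathcal{S}(\RR))$; Assumption \ref{asmp:saturated fluxes} (boundedness of $F'$) then yields $F(w)_x\in\mathcal{C}([0,T],\mathcal{S}(\RR))$, and combining gives $g\in\mathcal{C}([0,T],\mathcal{S}(\RR))$.

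Next I would invert $\Phi^{l_c}_{L_0}$ explicitly as in the earlier lemma. Writing $\Phi^{l_c}_{L_0}h=\sigma\phi_0(\cdot/L_0)(\phi_{1,l_c}*h)$ with $\phi_{1,l_c}(y)=l_c^{-1}\phi_1(y/l_c)$, the identity $\Phi^{l_c}_{L_0}h^{l_c}_{L_0}=g$ forces $\phi_{1,l_c}*h^{l_c}_{L_0}=g/(\sigma\phi_0(\cdot/L_0))$, which is well-defined because $\phi_0>0$ and $1/\phi_0$ has at most polynomial growth (Assumption \ref{asmp:conditions of Phi}). Taking Fourier transforms and using $\widehat{\phi_{1,l_c}}(\xi)=\hat\phi_1(l_c\xi)\neq 0$, Plancherel yields
\[
\|h^{l_c}_{L_0}(t,\cdot)\|_{L^2}^2
=\frac{1}{2\pi\sigma^2}\int\frac{\bigl|\widehat{g(t,\cdot)/\phi_0(\cdot/L_0)}(\xi)\bigr|^2}{|\hat\phi_1(l_c\xi)|^2}\,d\xi.
\]
Since $w(0,\cdot)=U$ and $w(T,\cdot)=U(\cdot-\gamma T-x_0)$, we have $w=\mathcal{H}[h^{l_c}_{L_0}]$, and the definition (\ref{eq:rate function}) of $I$ already gives $I(w)\leq\tfrac12\int_0^T\|h^{l_c}_{L_0}(t,\cdot)\|_{L^2}^2\,dt$.

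Finally I would pass to the limit $l_c\to 0$ by dominated convergence on the right-hand side. Pointwise, $|\hat\phi_1(l_c\xi)|^{-2}\to 1$; an integrable dominating envelope is obtained by combining the at-most-polynomial growth of $1/\hat\phi_1$ (Assumption \ref{asmp:conditions of Phi}) with the Schwartz decay of $\widehat{g(t,\cdot)/\phi_0(\cdot/L_0)}$, uniformly in $t\in[0,T]$. The main obstacle is precisely this uniform domination: one must verify that multiplication by $1/\phi_0(\cdot/L_0)$ (which has polynomial growth at $\pm\infty$) preserves Schwartz class, so that the Fourier transform of the quotient still decays faster than any polynomial. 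Once that is in place, the dominated-convergence limit is $(1/(2\pi\sigma^2))\int|\widehat{g/\phi_0}|^2\,d\xi$, and a final Plancherel identity (integrated in $t$) delivers the stated convergence to $\tfrac{1}{2\sigma^2}\int_0^T\|(w_t+F(w)_x-Dw_{xx})/\phi_0(\cdot/L_0)\|_{L^2}^2\,dt$, completing the proof.
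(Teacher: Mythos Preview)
Your proposal is correct and follows essentially the same approach as the paper: the paper simply states that since $w_t+F(w)_x-Dw_{xx}\in\mathcal{C}([0,T],\mathcal{S}(\RR))$, the lemma follows by replacing $v$ by $w$ in the proof of Lemma~\ref{lma:approximation of phi_1 for small |x_0|}, and you have carried out exactly that substitution (Fourier inversion of $\Phi^{l_c}_{L_0}$, Plancherel, and dominated convergence using the polynomial growth of $1/\hat\phi_1$ against the Schwartz decay of $\widehat{g/\phi_0}$). One minor remark: your invocation of Lemma~\ref{lma:approximation of phi_0 for large |x_0|} is not actually needed for this convergence statement itself (only Assumption~\ref{asmp:conditions of Phi} on $\phi_0$ is used here); that lemma enters only afterwards, when bounding the limiting integral.
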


The rest is to compute $\int_0^T\|(w_t+F(w)_x-Dw_{xx})/\phi_0(\cdot/L_0)\|_{L^2}^2 dt$. Note that
\[
	w_t+F(w)_x-Dw_{xx} = \frac{1}{T}[U(x-\gamma T-x_0)-U(x)] + F(w)_x - Dw_{xx}.
\]
With $L_0 = \gamma T + x_0 + C_w$ given in Lemma \ref{lma:approximation of phi_0 for large |x_0|},
\begin{align*}
	\lefteqn{\int_0^T \|(w_t+F(w)_x-Dw_{xx})/\phi_0(\cdot/L_0)\|_{L^2}^2 dt}\\
	&\leq \frac{2}{T^2}\int_0^T\|[U(\cdot-\gamma T-x_0)-U]/\phi_0(\cdot/L_0)\|_{L^2}^2dt\\
	&\quad + 2\int_0^T\|[F(w)_x - Dw_{xx}]/\phi_0(\cdot/L_0)\|_{L^2}^2dt\\
	&\leq \frac{2}{T^2}\int_0^T\big(2 \|[U(\cdot-\gamma T-x_0)-U]\|_{L^2}^2 + 2\big)dt\\
	&\quad +  2\int_0^T\big( 2\|F(w)_x - Dw_{xx}\|_{L^2}^2 + 2 \big)dt.
\end{align*}
We note that for $|x_0|$ large, $\|U(\cdot-\gamma T-x_0)-U\|^2_{L^2}=\Theta(|x_0|)$, and 
$\|F(w)_x-Dw_{xx}\|^2_{L^2}=\mathcal{O}(1)$. Then for $L_0 = \gamma T + x_0 + C_w$ given in Lemma 
\ref{lma:approximation of phi_0 for large |x_0|}, we have the upper bounds for $\mathcal{J}(A)$ for $|x_0|$ 
large:
\begin{equation}
	\label{eq:upper bounds for J(A) by using I(w)}
	\mathcal{J}(A)\leq I(w) \overset{l_c\to 0}{\leq} \Theta(|x_0|/T) + \Theta(T).
\end{equation}

Now we consider the lower bound. We know that 
(\ref{eq:aymptotic general lower bounds for J(A) by letting l_c to 0}) still holds for $|x_0|$ large and 
with $C_w$ given in Lemma \ref{lma:approximation of phi_0 for large |x_0|}. For $|x_0|$ large, 
$\int[U(x-x_0)-U(x)]dx=\Theta(|x_0|)$ so
\begin{equation}
	\label{eq:lower bound with large |x_0|}
	\mathcal{J}(A) \overset{l_c\to 0}{\geq} 
	\frac{1}{2\sigma^2}\|\phi_0\|_{L^2}^{-2}T^{-1}(x_0+\gamma T+C_w)^{-1}\Theta(x_0^2).
\end{equation}
Consequently, by combining (\ref{eq:upper bounds for J(A) by using I(w)}) and 
(\ref{eq:lower bound with large |x_0|}), we have the bounds in 
(\ref{eq:upper and lower bounds for J(A), general gamma}) and 
(\ref{eq:upper and lower bounds for J(A), gamma=0}) for $|x_0|$ large:
\begin{equation*}
	\mathcal{J}(A) \overset{l_c\to 0}{=}
	\begin{cases}
		\Theta(|x_0|),\quad   &\text{for $|x_0|$ large and any fixed $T$,}\\
		\Theta(|x_0|/T),\quad &\text{for $|x_0|$ large and $T$ small,}
	\end{cases}
\end{equation*}
\section{Large deviations for discrete conservation laws}
\label{sec:discrete LDP}

Conservation laws can only be solved numerically, in general, so we need to consider space-time
discretizations. For the calculation of small probabilities of large deviations, we may pass directly to the 
calculation of the infimum of the rate function, which we know analytically. First we discretize it in 
space and time and then use a suitable optimization method to find the approximate minimizer and the 
approximate value of the rate function. This way of calculating probabilities of large deviations has been 
carried out previously \cite{E2004} for different stochastically driven partial differential equations.
More involved methods that use adaptive meshes are discussed in \cite{Zhou2008}.

In the next subsection we show briefly that the rate function for large deviations of discrete conservation 
laws using Euler schemes is, as expected, the corresponding discretization of the continuum rate function.

\subsection{Large deviations with Euler schemes}
\label{sec:LD for Euler}

To formulate the discrete problem, we discretize the space and time domains with uniform grids, with 
$L=x_0<\cdots<x_M=R$, $M\Delta x=R-L$ and $0=t_0<\cdots<t_N=T$, $N\Delta t=T$. Let $Q_m^n$ denote the 
average of $u$ over the $m$-th cell $[x_{m-1},x_{m}]$ (whose center is $x_{m-1/2} =(x_{m-1}+x_m)/2$) at time $n\Delta t$, and $Q^n$ denote the vector 
$(Q_1^n,\ldots,Q_M^n)^T$. The Euler scheme for the conservation law is 
\begin{equation}
	\label{eq:discrete conservation laws}
	Q_m^{n+1} 
	= Q_m^n - \frac{\Delta t}{\Delta x}(F_{m+\frac{1}{2}}^n-F_{m-\frac{1}{2}}^n)
	+ D\frac{\Delta t}{\left(\Delta x\right)^2}(Q_{m+1}^n-2Q_m^n+Q_{m-1}^n)
	+ \eps\Delta W_m^n,
\end{equation}
for $m=2,\ldots,M-1$, $n=0,\ldots,N-1$,
where $F_{m\pm 1/2}^n(Q^n)$ are numerical fluxes constructed by standard finite volume methods such as 
Godunov or local-Lax-Friedrichs (LLF), and possibly with higher order ENO (Essentially Non-Oscillatory) 
reconstructions. The initial conditions $(Q^0_m)_{m=1,\ldots,M}$ are given by (\ref{eq:icburgers}),
the boundary conditions $(Q^n_1)_{n=1,\ldots,N}$  and $(Q^n_M)_{n=1,\ldots,N}$ are given by (\ref{eq:bcburgers}), and the
fluxes are given explicitly in (\ref{eq:Burgers fluxes by Godunov}), in the next 
subsection, for Burgers' equation. We let 
\[
	b(Q^n)=(b_2(Q^n),\ldots,b_{M-1}(Q^n))^T,
\]
where
\[
	b_m(Q^n) = -\frac{1}{\Delta x}(F_{m+\frac{1}{2}}^n-F_{m-\frac{1}{2}}^n)
	+D\frac{1}{(\Delta x)^2}(Q_{m+1}^n-2Q_m^n+Q_{m-1}^n).
\]
Let $(\Delta W_m^n)_{m=2,\ldots,M-1,n=1,\ldots,N} $ be Gaussian random variables with mean $0$ and covariance 
\[
	\mathbb{E}(\Delta W_{m_1}^{n_1}\Delta W_{m_2}^{n_2})
	=\begin{cases}
		\frac{\Delta t}{\Delta x}C_{m_1 m_2},\quad &n_1=n_2,\\
		0,\quad                                    &\text{otherwise.}
	\end{cases}
\]
The matrix $C=(C_{ij})_{i,j=2,\ldots,M-1}$ is symmetric and non-negative definite. For simplicity, we assume that $C$ is positive 
definite, and then $C=\Phi\Phi^T$ for an invertible matrix $\Phi$. 

By the Markov property, the joint density function $f_{(Q^0,\ldots,Q^N)}$ is 
\[
	f_{(Q^0,\ldots,Q^N)}(q^1,\ldots,q^N) = \prod_{n=0}^{N-1}f_{Q^{n+1}|Q^n}(q^{n+1};q^n),
\]
where 
\begin{align*}
	\lefteqn{f_{Q^{n+1}|Q^n}(q^{n+1};q^n)}\\
	&\quad = \frac{1}{Z}\exp\left[ -\frac{\Delta t \Delta x}{2\eps^2}
	\left(\frac{1}{\Delta t}(q^{n+1}-q^n)-b(q^n)\right)^T C^{-1}
	\left(\frac{1}{\Delta t}(q^{n+1}-q^n)-b(q^n)\right)\right]\\
	&\quad = \frac{1}{Z}\exp\left[ -\frac{\Delta t \Delta x}{2\eps^2} \left\Vert 
	\Phi^{-1}\left(\frac{1}{\Delta t}(q^{n+1}-q^n)-b(q^n)\right)\right\Vert _2^2\right] ,
\end{align*}
where $Z^2= (2 \pi \eps^2 \Delta t/\Delta x)^{M-2} \det C$ and the $l^2$-norm is here 
$$
\| q\|_2^2 = \sum_{j=2}^{M-1} q_j^2 .
$$
The exact probability that $Q=(Q^1,\ldots,Q^N)\in A$ with the given $Q^0$ is therefore
\begin{multline*}
	\mathbb{P}(Q\in A)\\
	=\int_{A}\frac{1}{Z^N}\exp\left[-\frac{\Delta t \Delta x}{2\eps^2}
	\sum_{n=0}^{N-1}\left\Vert \Phi^{-1}\left(\frac{1}{\Delta t}
	(q^{n+1}-q^n)-b(q^n)\right)\right\Vert_2^2\right] dq^1\cdots dq^N.
\end{multline*}
Because the problem is finite-dimensional, the LDP is a form of Laplace's method for the asymptotic evaluation 
of integrals \cite{Varadhan1966} and the rate function for $q=(q^1,\ldots,q^N)$ is 
\begin{equation}
	\label{eq:discrete rate function}
	I(q )=\frac{\Delta t \Delta x}{2}\sum_{n=0}^{N-1} \left\Vert 
	\Phi^{-1}\left(\frac{1}{\Delta t}(q^{n+1}-q^n)-b(q^n)\right)\right\Vert_2^2.
\end{equation}

From this expression it is clear that the rate function of the continuous conservation law is the limit of 
$\eqref{eq:discrete rate function}$ as $\Delta t$, $\Delta x\to 0$. However, the LDP of the discrete 
conservation law does not immediately imply that of the continuous case. The limit of $\eps$ and the 
limit of $\Delta t$, $\Delta x$ need to be interchangeable. In the language of large deviations, the law of 
the discrete conservation law has to be exponentially equivalent to that of the continuous one 
\cite{Dembo2010}. Without going into the proof of this, we can only say that 
(\ref{eq:discrete rate function}) is a discretization of the rate function of the continuous conservation 
law. 

\subsection{Numerical calculation of rate functions for changes in traveling waves}

In the numerical simulations we consider the rare event that a traveling wave at time $0$ becomes a 
different traveling wave at time $T$ due to the small random perturbations.

We carry out numerical calculations with Burgers' equation as a simple but representative conservation 
law. For convenience, given a traveling wave $U_0$ with the speed $\gamma$, we consider Burgers' equation 
in moving coordinates:
\begin{equation}
	\label{eq:Burgers eqn in the moving coordinate}
	u_t + \Big(\frac{1}{2}(u-\gamma)^2\Big)_x = (Du_x)_x + \eps\dot{W}.
\end{equation}
Thus $U_0$ is a traveling wave of (\ref{eq:Burgers eqn in the moving coordinate}) with speed zero. We are 
interested in the rare event that $u(0,x)=U_0(x)$ and $u(T,x)=U_T(x)$ because of $\eps \dot{W}$, where $U_0$ 
and $U_T$ are two different traveling waves.

We therefore minimize the rate function (\ref{eq:discrete rate function}) to compute the asymptotic 
probability. The initial conditions
\begin{equation}
\label{eq:icburgers}
q_m^0=U_0(x_{m-1/2}) \mbox{ for all } m=1,\ldots,M
\end{equation}
and the  terminal conditions 
\begin{equation}
\label{eq:tcburgers}
q_m^N=U_T(x_{m-1/2})\mbox{ for all } m=1,\ldots,M 
\end{equation}
are simply the values of the functions at the centers of 
the cells.
For $m=2,\ldots,M-1$ 
we let $F_{m\pm 1/2}^n$ be the numerical fluxes for $(u-\gamma)^2/2$ at $x_{m\pm 1/2}$. We use Godunov's method 
\cite{LeVeque2002} to construct $F_{m\pm 1/2}^n$:
\begin{equation}
\label{eq:Burgers fluxes by Godunov}
	F_{m-1/2}^n
	=\begin{cases}
		\min_{q_{m-1}^n\leq q\leq q_m^n}\frac{1}{2}(q-\gamma)^2,\quad
		&q_{m-1}^n\leq q_m^n,\\
		\max_{q_m^n\leq q\leq q_{m-1}^n}\frac{1}{2}(q-\gamma)^2,\quad 
		&q_m^n\leq q_{m-1}^n.
	\end{cases}
\end{equation}

When the final traveling wave solution $U_T$ is the shifted initial profile $U_0(x-x_0)$ for some $x_0$, 
then we impose the boundary conditions 
\begin{equation}
\label{eq:bcburgers}
q_1^n= u_- \mbox{  and } q_M ^n= u_+\mbox{ for all } n=1,\ldots,N ,
\end{equation}
where 
$u_\pm = \lim_{x \to \pm \infty} U_0(x)$. We will discuss the boundary conditions imposed in the other cases 
later on.

The covariance matrix of the random coefficients $(\Delta W_i^n)_{i=2,\ldots,M-1}$ is set to be equal to  
$C_{ij}=\delta_{ij}$ and thus $\Phi=I_{M-2\times M-2}$. In other words, $(\Delta W_i^n)_{i=2,\ldots,M-1}$ are i.i.d. 
Gaussian random variables. Since the discrete problem is finite dimensional, $\Phi$ is clearly a 
Hilbert-Schmidt matrix.

The objective is to minimize the rate function (\ref{eq:discrete rate function}). Because the initial, 
terminal and boundary conditions are easily integrated into the definition of the discrete rate function, we 
can minimize $I(q)$ by unconstrained optimization methods. The BFGS quasi-Newton method 
\cite{Nocedal2006} is our optimization algorithm.

An important issue in numerical optimization is that any gradient-based method, for example the BFGS method 
that we use, only gives a local optimum unless the objective function is convex. In our case, it is not 
clear that the discrete rate function (\ref{eq:discrete rate function}) is convex or not. However, based on 
our numerical simulations we note the following.
\begin{enumerate}
	\item Our numerical results of the optimal shifted profiles coincide with the analytical predictions 
	(\ref{eq:upper and lower bounds for J(A), gamma=0}).

	\item Instead of using a good initial guess for the minimizer in the numerical optimization, we have 
	also numerically verified that completely random initial guesses give essentially the same result.
	 
	\item We have checked numerically to see if  the rate function (\ref{eq:discrete rate function}) is 
	convex. We randomly pick two close by test paths to see if the midpoint convexity of
	(\ref{eq:discrete rate function}) is satisfied.  We find that in $99.98\%$ out of $10^6$ pairs the 
	discrete rate function passes the convexity test. It is not known what causes the $0.02\%$ failures. We 
	conclude that based on numerical calculations (\ref{eq:discrete rate function}) is essentially convex, 
	if it is not fully convex. This explains the observed robustness of the numerical optimization.	
\end{enumerate}

\subsection{The numerical setup}

We use different $U_0$ and $U_T$ to calculate probabilities of several rare events. Our main interest is 
the anomalous wave displacement,  which is theoretically analyzed in the previous sections. The other cases are 
the wave speed change, the transition from a strong shock to a weak shock, and the transition from a strong 
shock to a weak shock. We have not carried out an analysis of the last three cases. However, the 
probabilities of these rare events can be calculated numerically and show how unlikely such events are 
compared to anomalous wave displacement.

In each configuration we consider the high viscosity case ($D=1$) and the low  viscosity case ($D=0.01$). 
$T=1$, $\Delta x=0.2$ and $\Delta t=0.02$ in all simulations and the linear interpolation of $U_0$ and $U_T$ 
is the initial guess in the numerical optimization. As we noted before, a random initial guess gives 
essentially the same result, but we use the linear interpolation to speed up the optimization.

\subsection{Anomalous wave displacements}

In this case, we let $U_0 $ be the traveling wave solution for Burgers' equation 
(\ref{eq:Burgers eqn in the moving coordinate}), and $U_T=U_0(x-x_0)$ represent a shifted traveling wave. 
This is the discrete version of (\ref{eq:rare event of the wave dispacement}), and we find that the 
numerical results are consistent with the analytical result 
(\ref{eq:upper and lower bounds for J(A), gamma=0}).

We first consider Fig.\ref{fig:the plot of optimal paths, shifted profiles with high D} and 
Fig.\ref{fig: I versus x_0 and T, shifted profiles with high D} with $D=1$. The optimal path is close to the 
linear shift when $x_0$ is small and it looks like the linear interpolation when $x_0$ is large. This also 
motivates us to choose the test functions $v$ and $w$ for the upper bounds of $\mathcal{J}(A)$ in Section 
\ref{sec:bounds for small |x_0|} and \ref{sec:bounds for large |x_0|}. Further, Fig.\ref{fig: I versus x_0 
and T, shifted profiles with high D} shows that the optimal $I$ is quadratic near $x_0=0$ and is linear for 
$x_0$ large, and is of order $1/T$ in $T$. These observations confirms the analysis 
(\ref{eq:upper and lower bounds for J(A), gamma=0}).

We consider next Fig.\ref{fig:the plot of optimal paths, shifted profiles with low D} and 
Fig.\ref{fig: I versus x_0 and T, shifted profiles with low D} with $D=0.01$. As the transition regions are 
very narrow and separate very quickly in the low viscosity case, the optimal path is nearly the linear 
interpolation, and therefore the $I$ versus $x_0$ plot is almost linear except around $x_0=0$. Moreover, the 
optimal rate function is still of order $1/T$ in $T$, which is also seen in the analysis 
(\ref{eq:upper and lower bounds for J(A), gamma=0}).

\begin{figure}
	\includegraphics[width=0.49\textwidth]
	{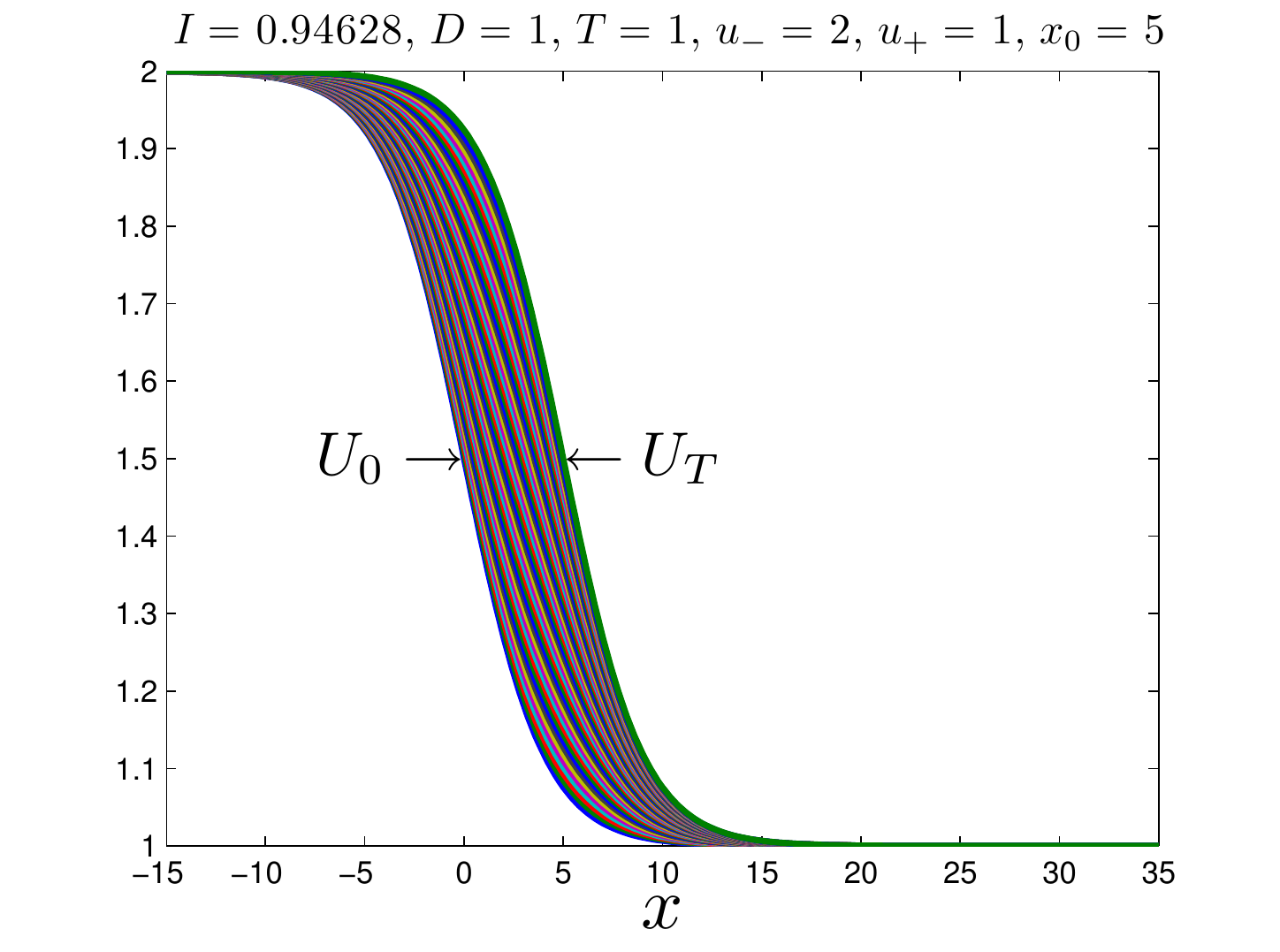}
	\includegraphics[width=0.49\textwidth]
	{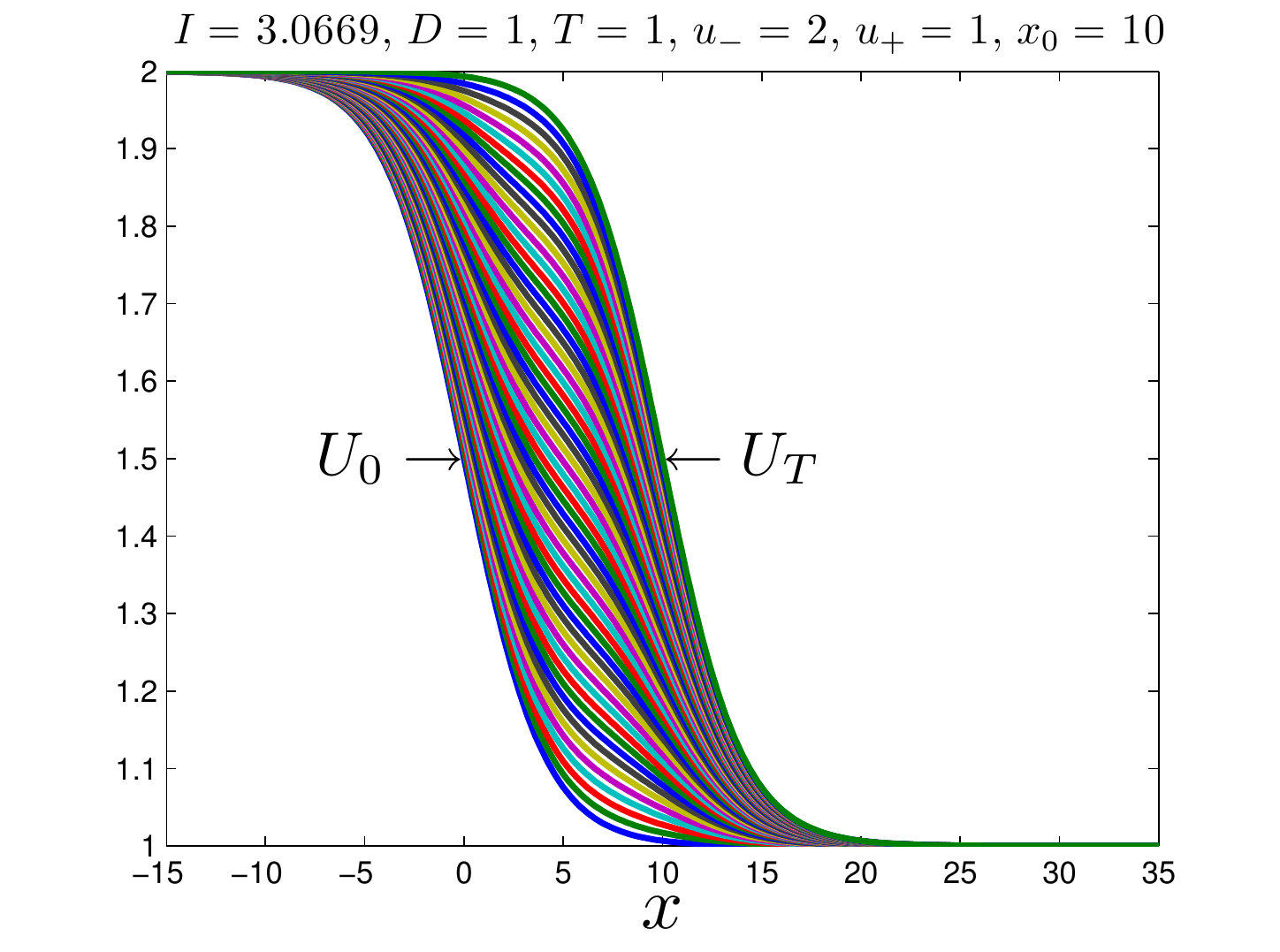}

	\includegraphics[width=0.49\textwidth]
	{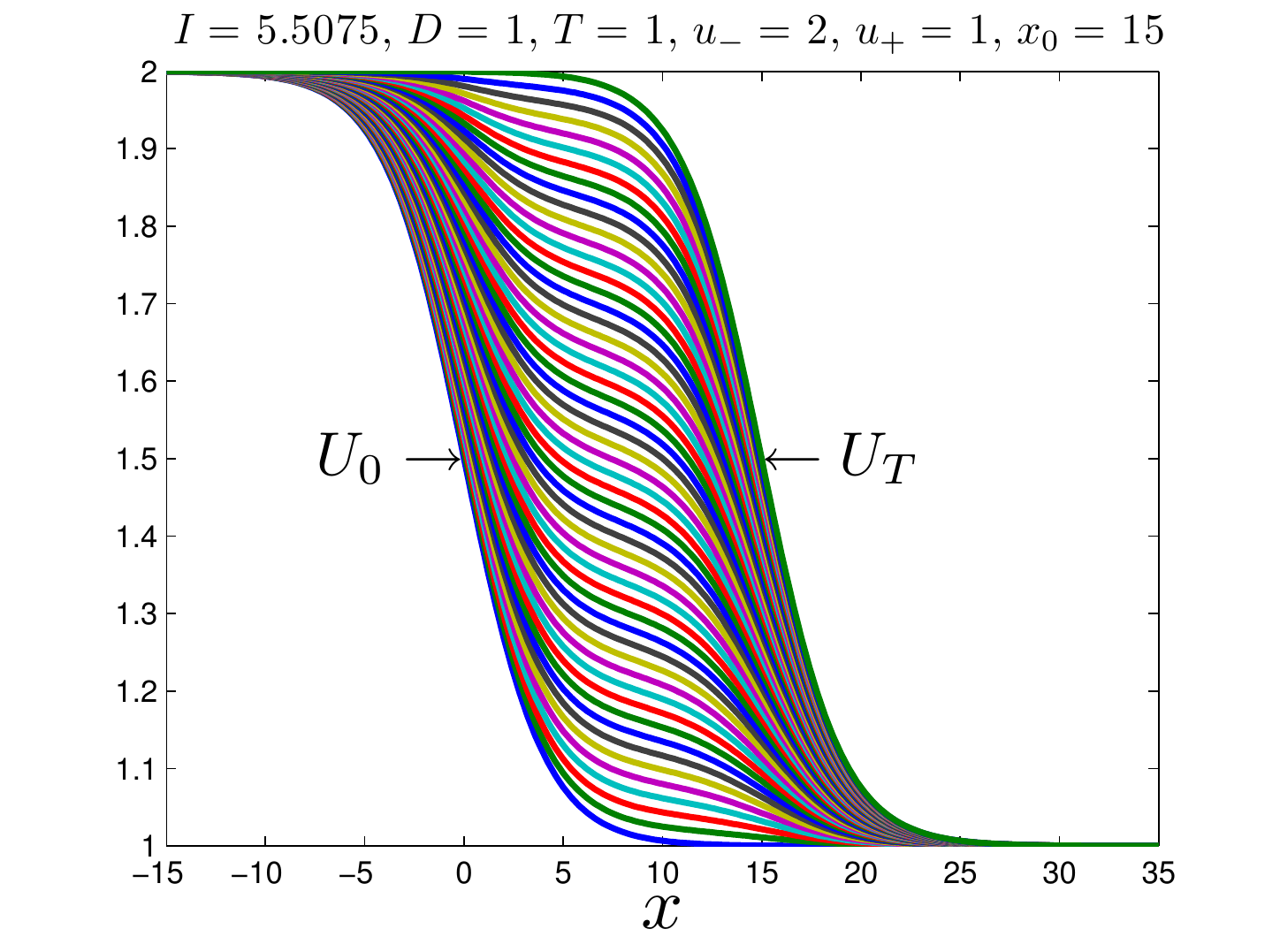}
	\includegraphics[width=0.49\textwidth]
	{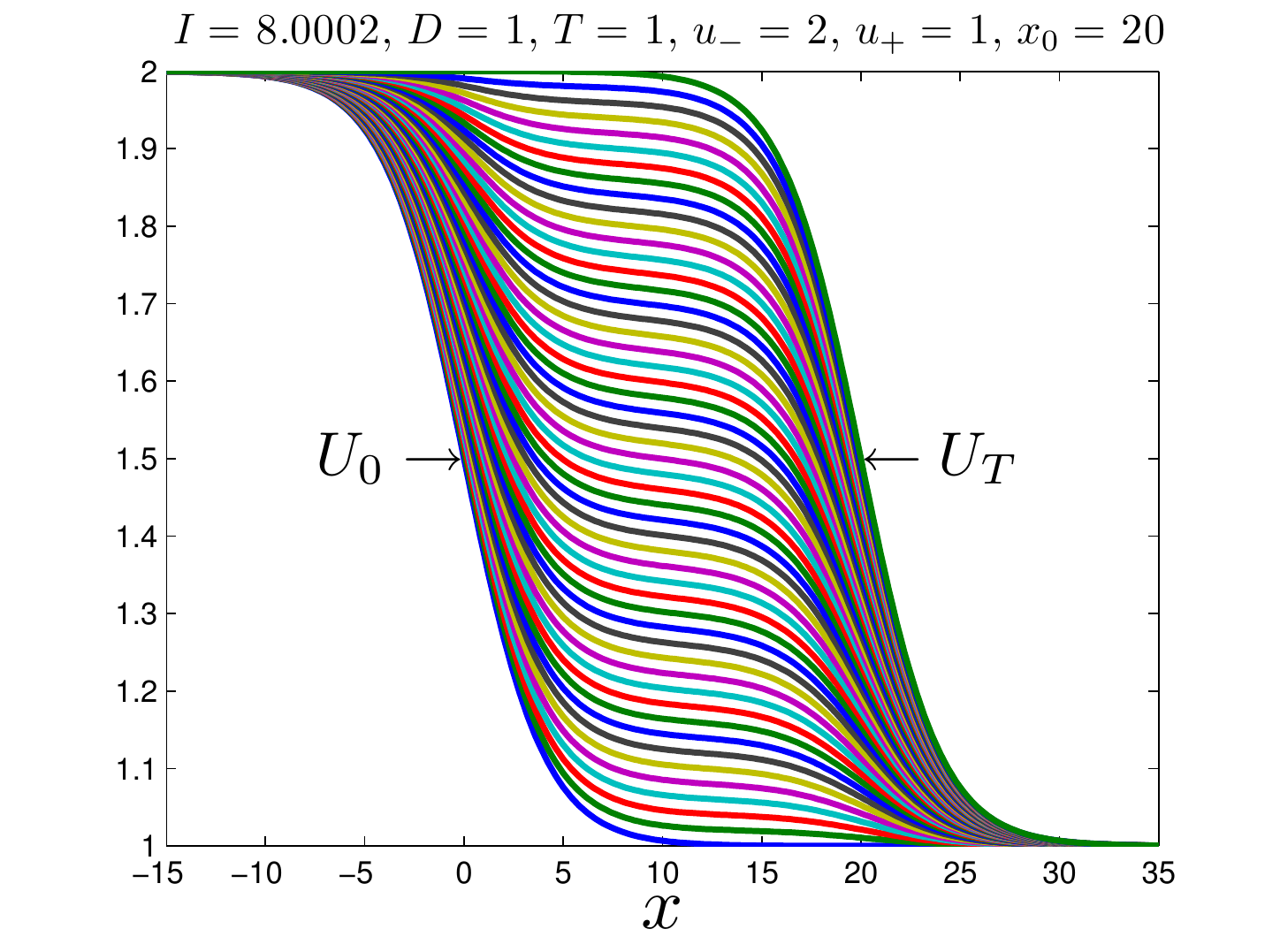}
	
	\caption{
	\label{fig:the plot of optimal paths, shifted profiles with high D}
	The optimal paths and their values of $I$ in (\ref{eq:discrete rate function}) in the case that 
	$U_T(x)=U_0(x-x_0)$ for different $x_0$. In each figure, we plot the curves indicating the optimal path 
	at time $0,\Delta t,2\Delta t,\ldots,T=1$. We can see that for $x_0$ small, the optimal path is close to 
	the linear shift traveling wave and for $x_0$ large, the optimal path looks like the linear 
	interpolation of $U_0$ and $U_T$. These observations are consistent with the test function $v$ and 
	$w$ we use for the upper bounds in Section \ref{sec:bounds for small |x_0|} and 
	\ref{sec:bounds for large |x_0|}.}
\end{figure}

\begin{figure}
	\includegraphics[width=0.49\textwidth]
	{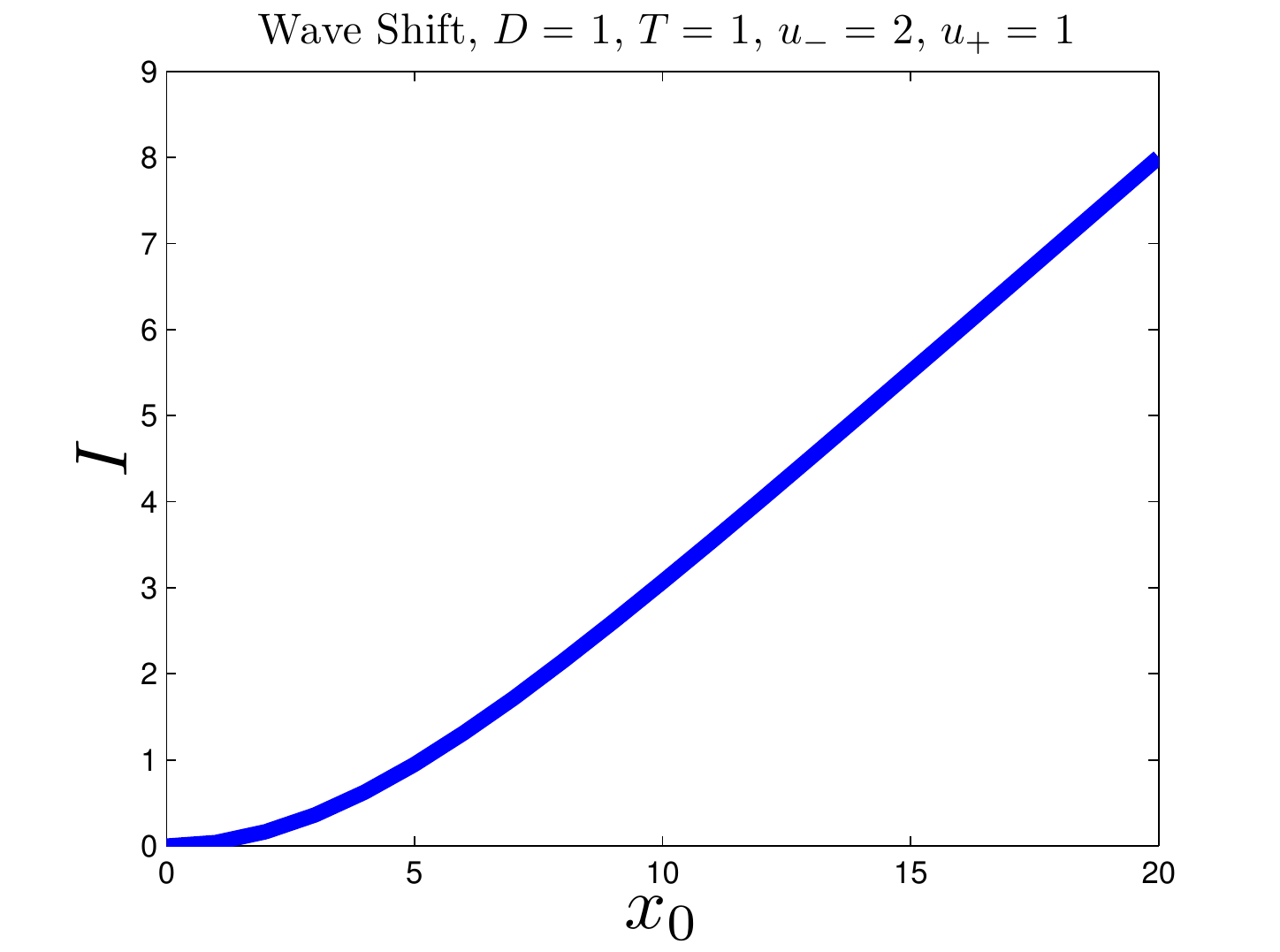}
	\includegraphics[width=0.49\textwidth]
	{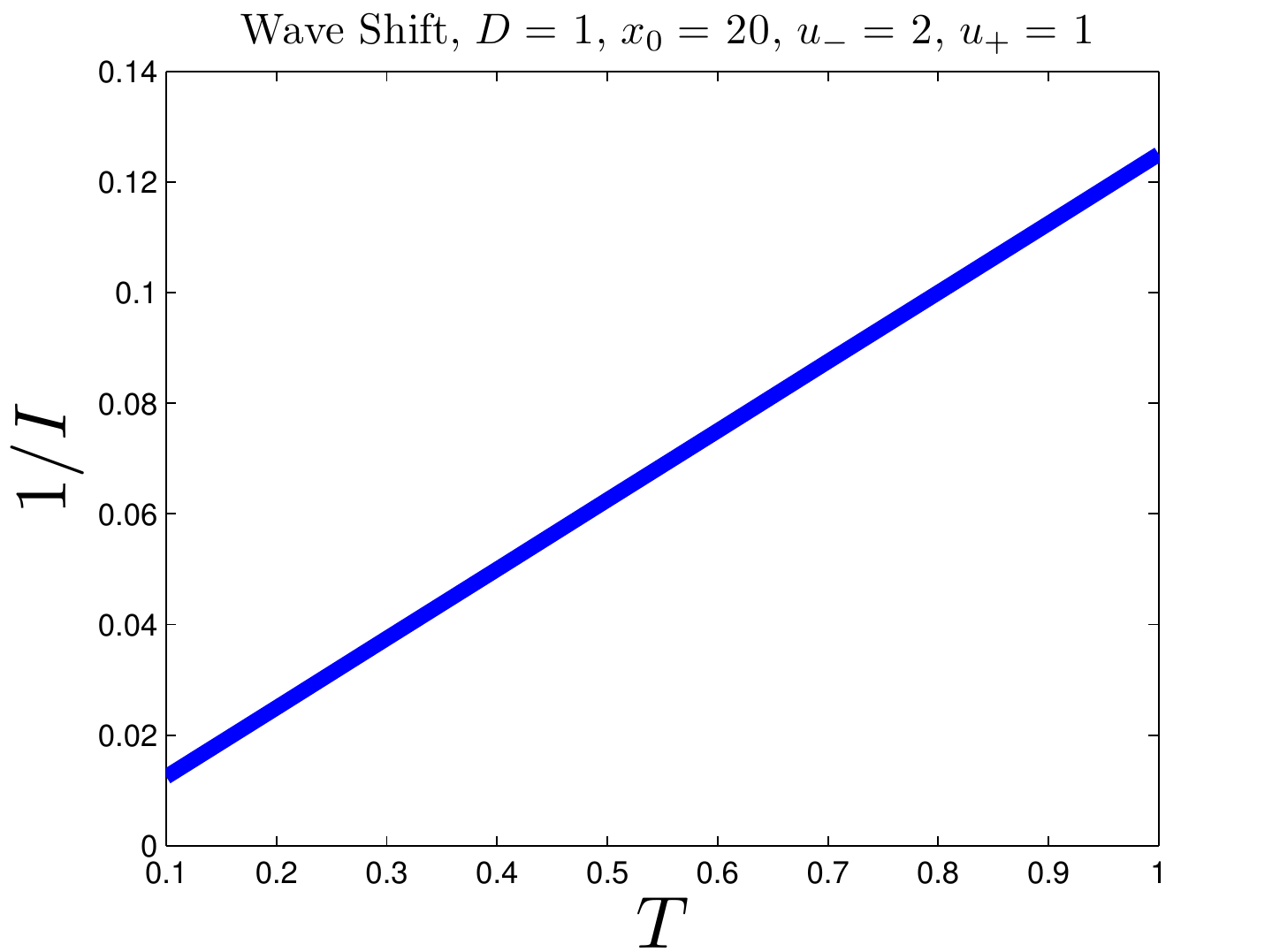}
	
	\caption{
	\label{fig: I versus x_0 and T, shifted profiles with high D}
	\textbf{Left}: The optimal values of $I$ in (\ref{eq:discrete rate function}) versus $x_0$ in the case 
	that $U_T(x)=U_0(x-x_0)$ for $x_0=0,1,2,\ldots,20$ with the same setting in Fig.\ref{fig:the plot of 
	optimal paths, shifted profiles with high D}. We see that the curve is quadratic for $x_0$ small and 
	linear for $x_0$ large (see (\ref{eq:upper and lower bounds for J(A), gamma=0})). \textbf{Right}: The 
	reciprocal of the optimal values of $I$ in (\ref{eq:discrete rate function}) versus $T$ in the case that 
	$U_T(x)=U_0(x-x_0)$ for $x_0=20$ and $T=0.1,0.2,\ldots,1$ with the same setting in 
	Fig.\ref{fig:the plot of optimal paths, shifted profiles with high D}. We see that the curve is linear 
	in $T$, which means the optimal $I$ is of $\Theta(1/T)$ (see also 
	(\ref{eq:upper and lower bounds for J(A), gamma=0})).}
\end{figure}

\begin{figure}
	\includegraphics[width=0.49\textwidth]
	{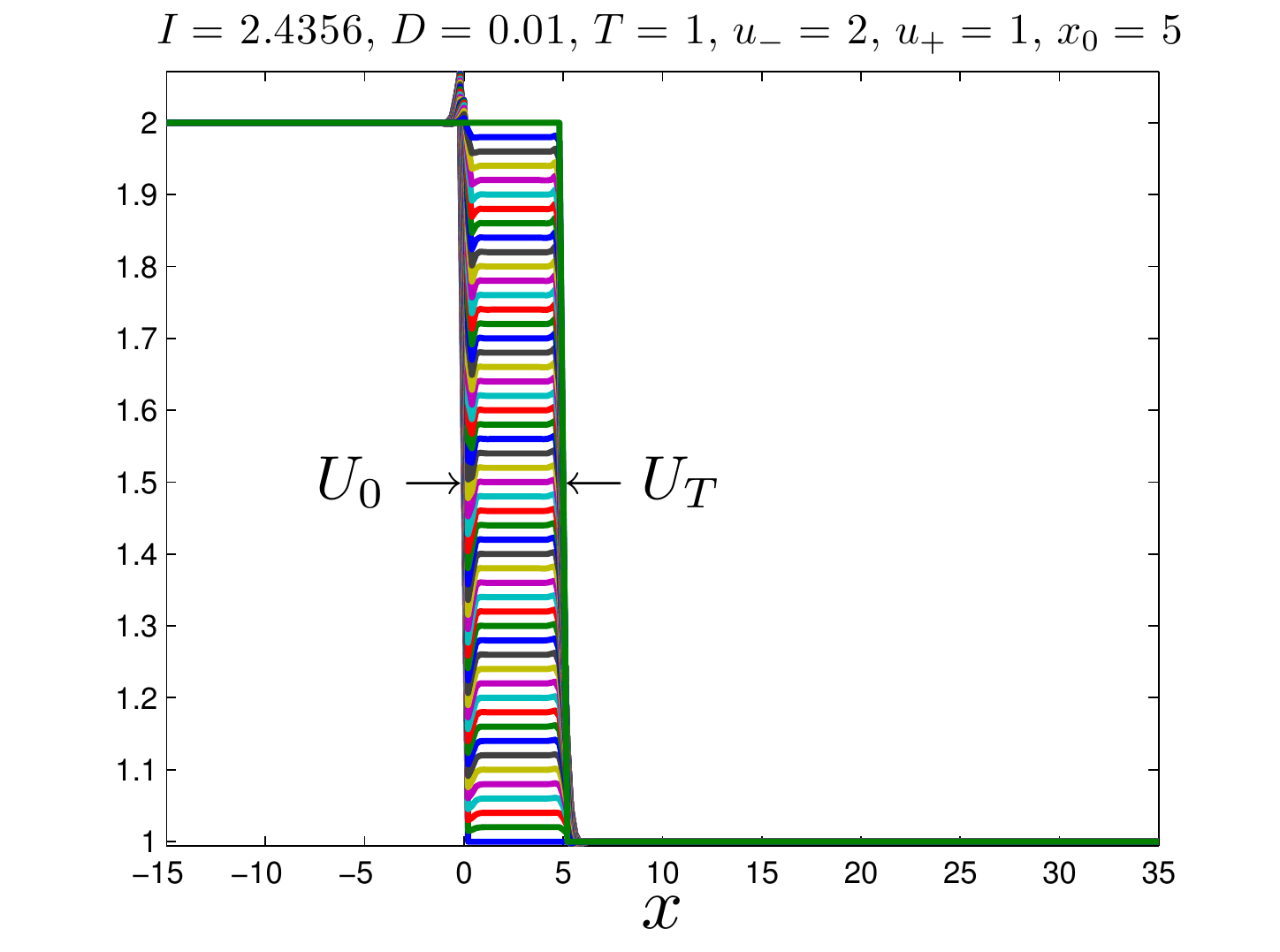}
	\includegraphics[width=0.49\textwidth]
	{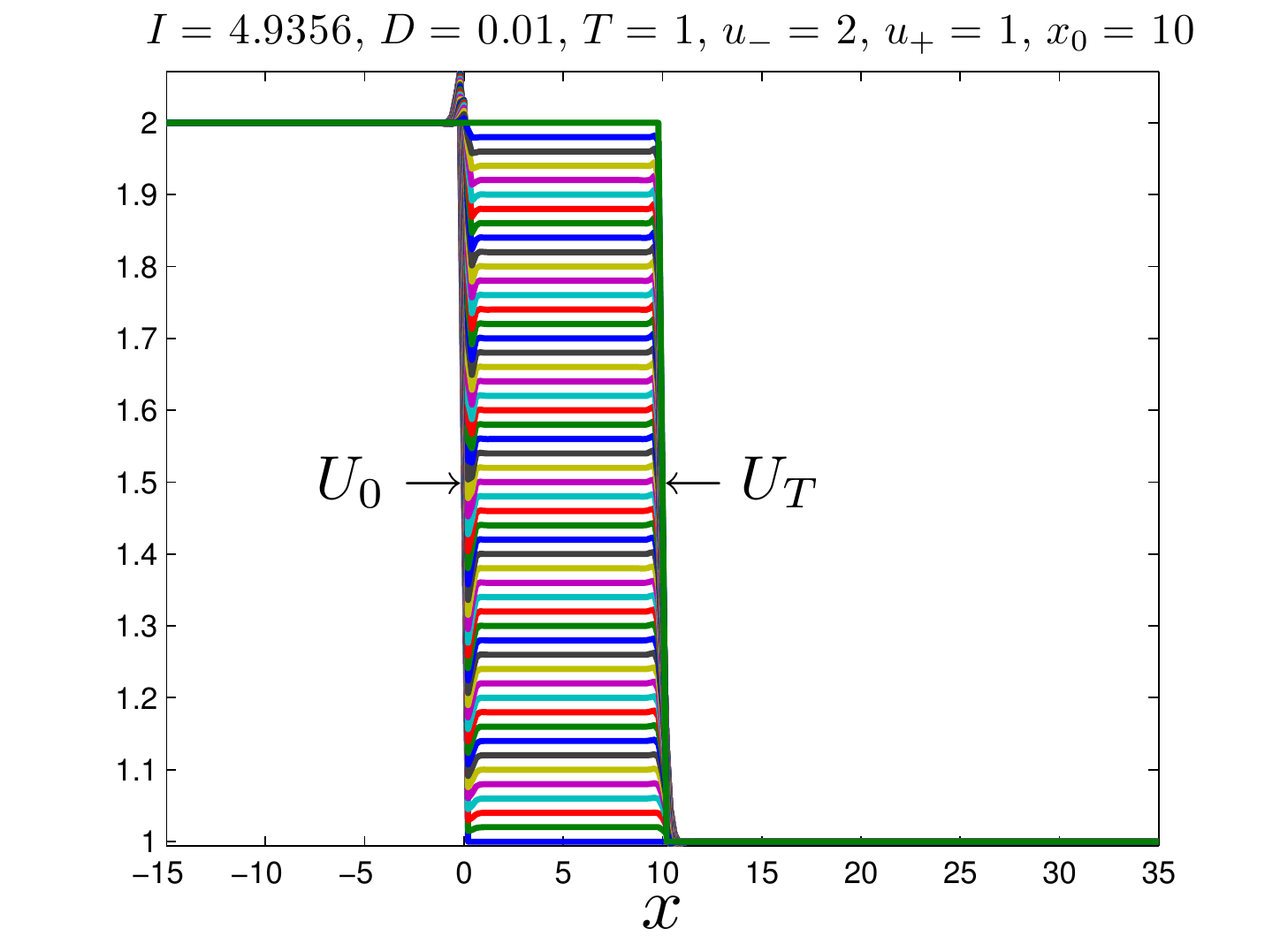}

	\includegraphics[width=0.49\textwidth]
	{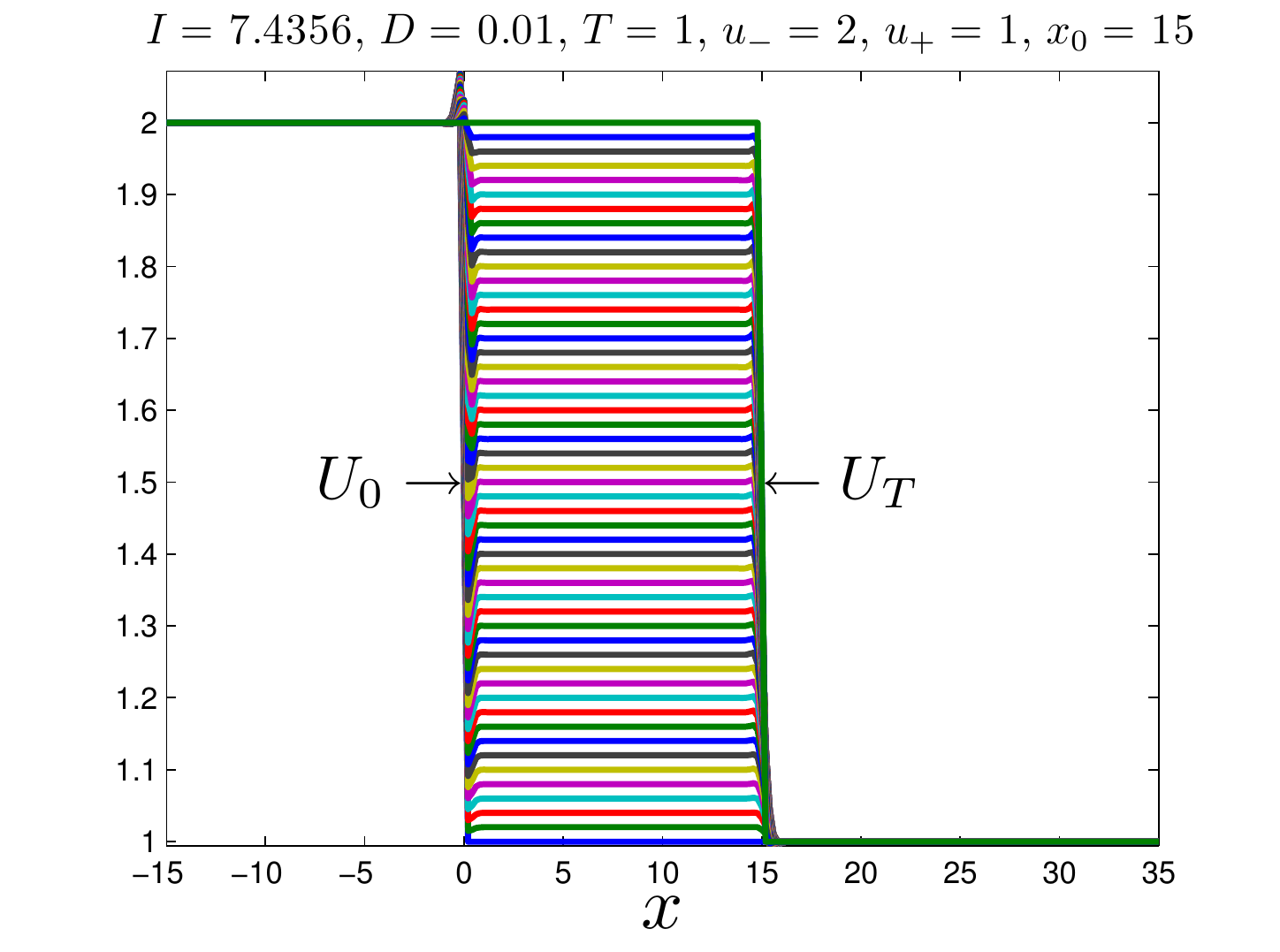}
	\includegraphics[width=0.49\textwidth]
	{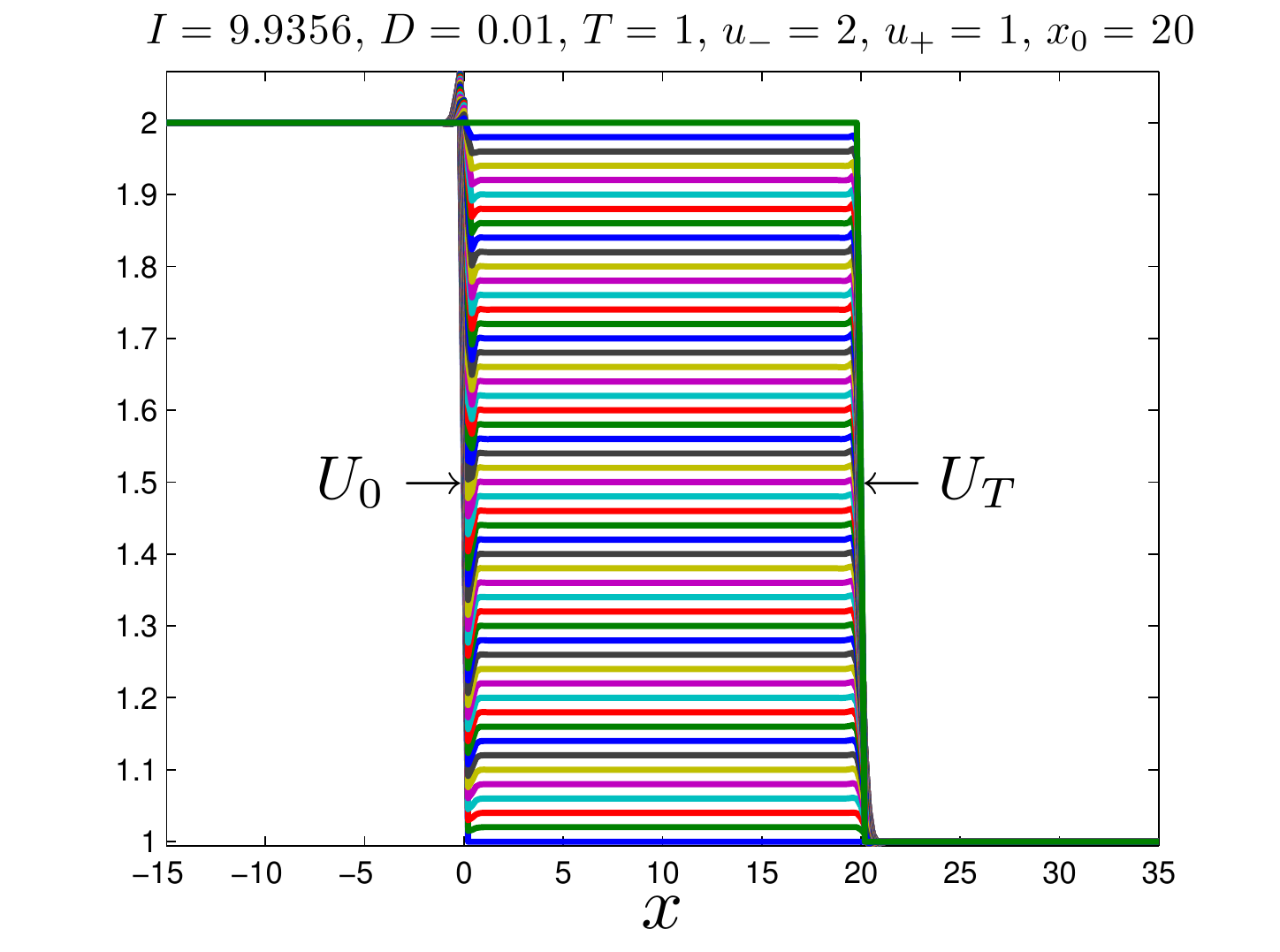}
	
	\caption{
	\label{fig:the plot of optimal paths, shifted profiles with low D}
	The optimal paths and their values of $I$ in (\ref{eq:discrete rate function}) in the case that 
	$U_T(x)=U_0(x-x_0)$ for different $x_0$. In each figure, we plot the curves indicating the optimal path 
	at time $0,\Delta t,2\Delta t,\ldots,T=1$. Different from 
	Fig.\ref{fig:the plot of optimal paths, shifted profiles with high D}, as the transition region is very 
	small, we see nearly no shifted part but only the linear interpolation.}
\end{figure}

\begin{figure}
	\includegraphics[width=0.49\textwidth]
	{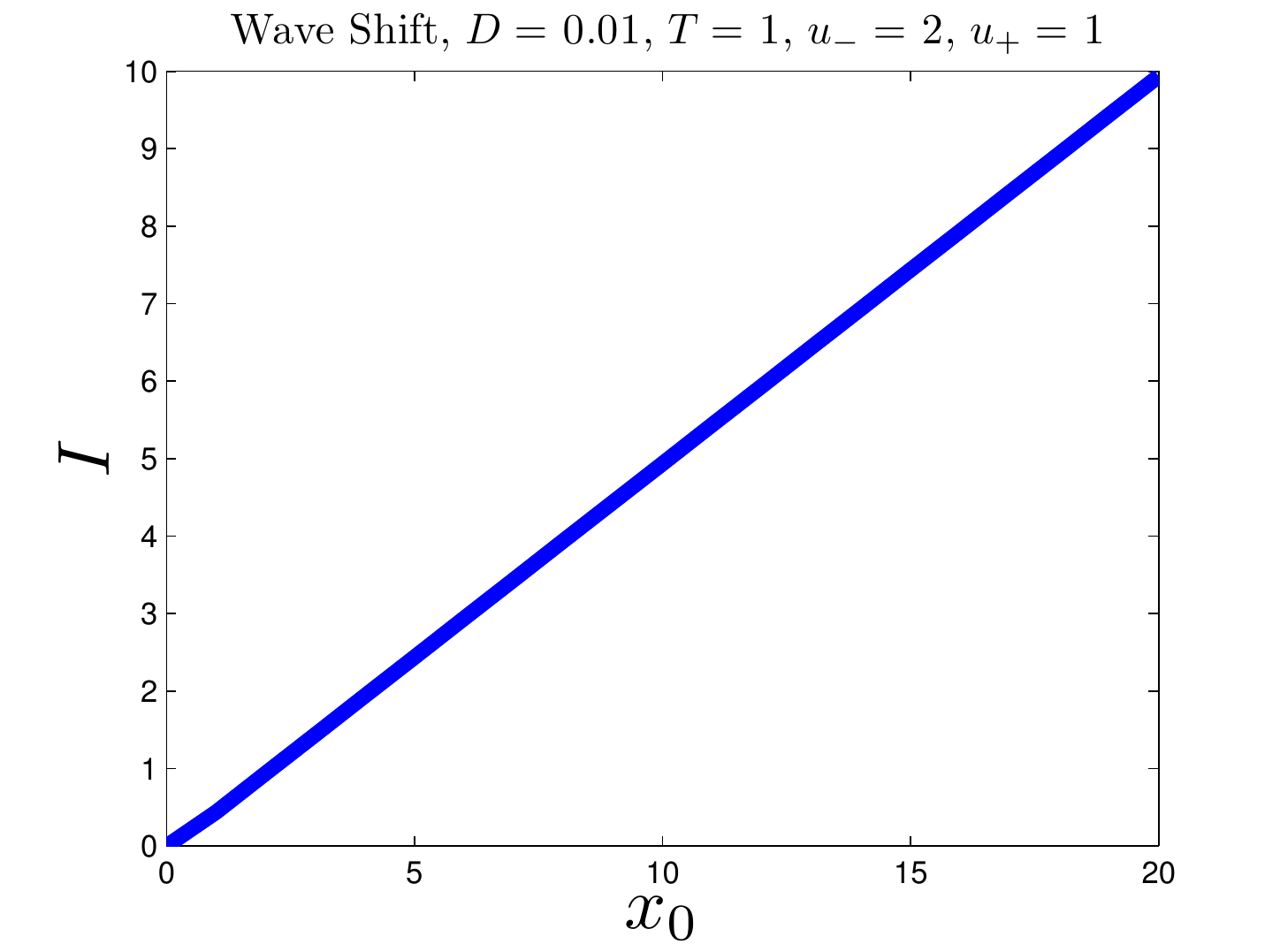}
	\includegraphics[width=0.49\textwidth]
	{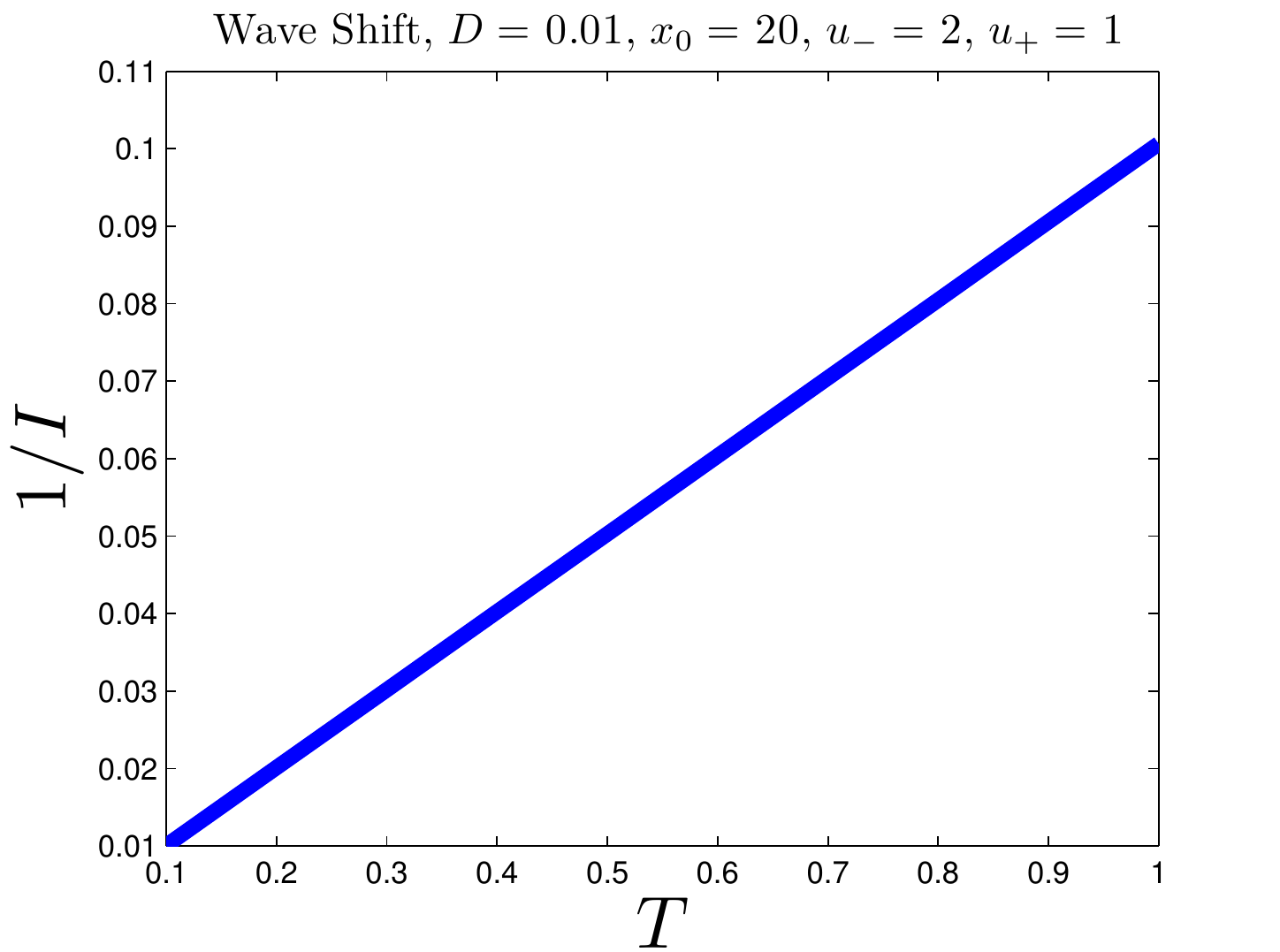}

	\caption{
	\label{fig: I versus x_0 and T, shifted profiles with low D}
	\textbf{Left}: The optimal values of $I$ in (\ref{eq:discrete rate function}) versus $x_0$ in the case 
	that $U_T(x)=U_0(x-x_0)$ for $x_0=0,1,2,\ldots,20$ with the same setting in Fig.\ref{fig:the plot of 
	optimal paths, shifted profiles with low D}. As what we see	in Fig.\ref{fig:the plot of optimal paths, 
	shifted profiles with low D}, the optimal path is the linear interpolation and thus the curves is almost 
	linear except a small perturbation around $x_0=0$. \textbf{Right}: The reciprocal of the optimal values 
	of $I$ in (\ref{eq:discrete rate function}) versus $T$ in the case that $U_T(x)=U_0(x-x_0)$ for 
	$T=0.1,0.2,\ldots,1$ with the same setting in 
	Fig.\ref{fig:the plot of optimal paths, shifted profiles with low D}. We see that the curve is linear in 
	$T$, which means the optimal $I$ is $\mathcal{O}(1/T)$.}
\end{figure}

\subsection{Change of wave speeds}

In this subsection we consider the rare event in which $u(0,x)=U_0(x)$ and $u(T,x)=U_T(x):=U_1(x)$ 
because of $\eps \dot{W}$, where $U_1(x)$ is a traveling wave with $u_{1\pm}:=\lim_{x\to \pm\infty} U_1(x)$ 
which are different from $u_{\pm}= \lim_{x\to \pm\infty}U_0(x)$, and such that 
$\gamma_1= \frac{F(u_{1+})-F(u_{1-})}{u_{1+}-u_{1-}}$ is different from 
$\gamma=  \frac{F(u_{+})-F(u_{-})}{u_{+}-u_{-}}$. This case does not belong to the class of problems 
addressed in the previous sections of this paper, in which the boundary conditions are fixed. But it is 
still possible to look for the optimal paths going from $U_0$ to $U_T$ and that minimize the rate function 
$I$.

The boundary conditions are more delicate to implement in this case. We know that $U_0$ and $U_T$ are very 
close to constants when $L$ and $R$ are far from their transition regions. In this case, the LDP implies 
that the optimal path around the boundaries should be very close to the linear interpolations of $U_0$ and 
$U_T$ in time. Therefore we let: 
\[
	q_1^n=(1-\frac{n}{N})q_1^0+\frac{n}{N}q_1^N,\quad
	q_M^n=(1-\frac{n}{N})q_M^0+\frac{n}{N}q_M^N.
\]
It is possible not to set the boundary conditions and to optimize the boundary cells as well. Our numerical 
simulations show, however, that the solution in both cases is basically the same except for some 
oscillations near the boundaries. The oscillations come from the inappropriate discretization at the 
boundaries, and this is a limitation of the numerical discretization. We can have a few extra boundary 
conditions to reduce the unwanted oscillations at the boundaries. For example, we can additionally set
\[
	q_2^n=(1-\frac{n}{N})q_2^0+\frac{n}{N}q_2^N,\quad 
	q_{M-1}^n=(1-\frac{n}{N})q_{M-1}^0+\frac{n}{N}q_{M-1}^N.
\]

The results are shown in Fig.\ref{fig:the plot of optimal paths, change of speed}. We let $\gamma=0$ and 
$\gamma_1=3.5$ while we keep $u_{-}-u_{+}=u_{1-}-u_{1+}=1$ to indicate that $U_0$ and $U_T$ have roughly the 
same transition magnitude. We see that the values of the rate function are much larger than that of the 
anomalous wave displacements. This means that it is very unlikely to have changes of wave speeds compared to 
wave displacements.

\begin{figure}
	\includegraphics[width=0.49\textwidth]
	{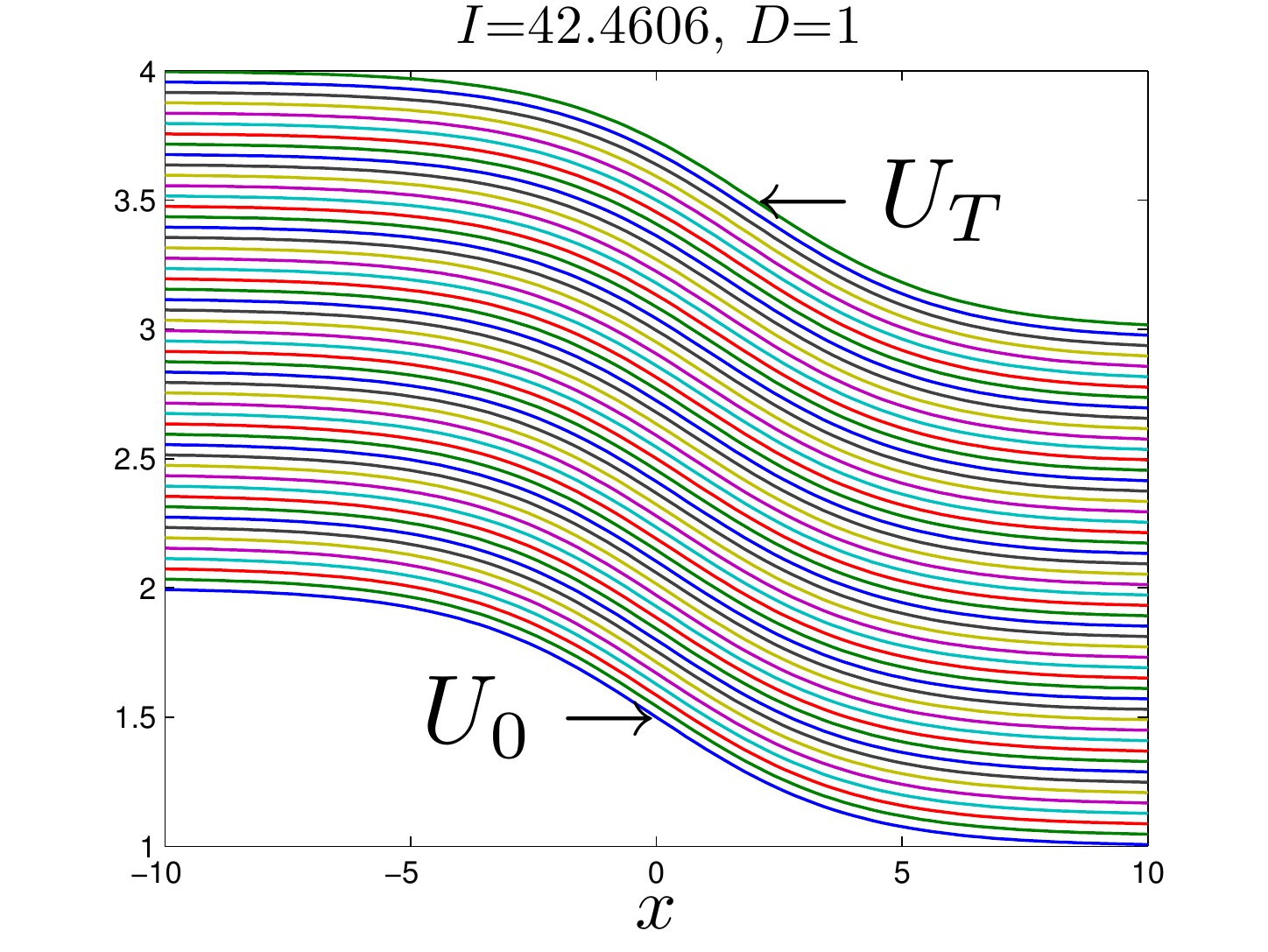}
	\includegraphics[width=0.49\textwidth]
	{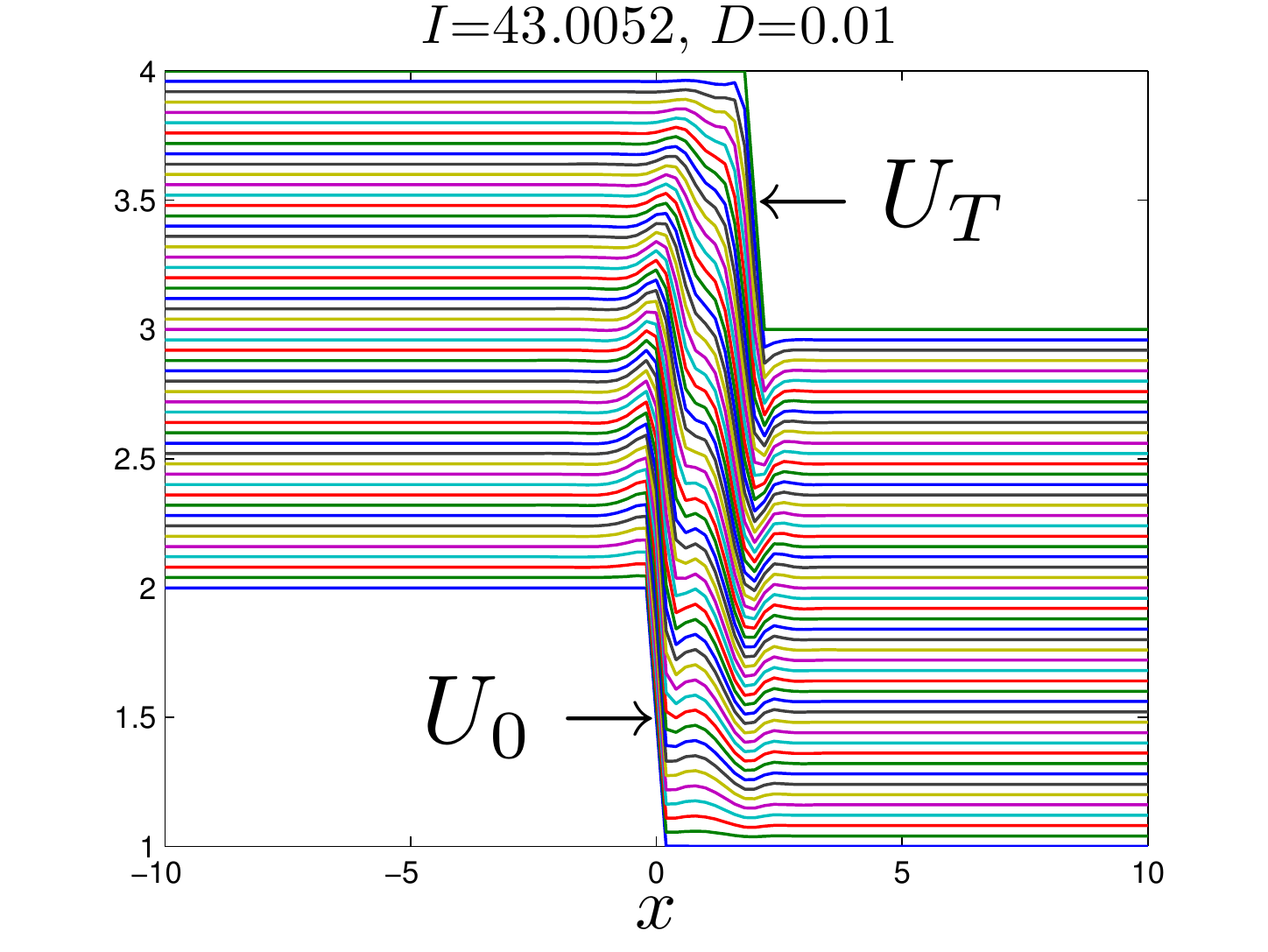}
	\caption{
	\label{fig:the plot of optimal paths, change of speed}
	The optimal paths and their values of $I$ in (\ref{eq:discrete rate function}) in the case that $U_0$ 
	and $U_T$ have different speeds. Here we let $\gamma=1.5$ in Burgers' equation 
	(\ref{eq:Burgers eqn in the moving coordinate}), and therefore the speed of $U_0$ is zero and the speed 
	of $U_T$ is $3.5$. The differences between the left and right boundary values are kept the same so that 
	$U_0$ and $U_T$ have the same transition magnitude. In each figure, we plot the curves indicating the 
	optimal path at time $0,\Delta t,2\Delta t,\ldots,T=1$.}
\end{figure}

\subsection{Weak shocks to strong shocks and strong shocks to weak shocks}

We also consider the case that $U_0$ is a weak (strong) shock while $U_T$ is a strong (weak) shock. By a 
strong shock we mean that the difference between $u_-$ and $u_+$ is large. This case is also not in the 
range of our analytical framework, but we can still compute the rate function after we impose the suitable 
boundary conditions. We use the same boundary conditions as the ones in the previous subsection:
\begin{align*}
	&q_1^n=(1-\frac{n}{N})q_1^0+\frac{n}{N}q_1^N,\quad
	q_M^n=(1-\frac{n}{N})q_M^0+\frac{n}{N}q_M^N,\\
	&q_2^n=(1-\frac{n}{N})q_2^0+\frac{n}{N}q_2^N,\quad 
	q_{M-1}^n=(1-\frac{n}{N})q_{M-1}^0+\frac{n}{N}q_{M-1}^N.
\end{align*}
From Fig.\ref{fig:the plot of optimal paths, weak to strong} and 
Fig.\ref{fig:the plot of optimal paths, strong to weak} we see that the optimal path of weak to strong and 
the one of strong to weak are significantly different, even if the reference strong and weak shocks are 
fixed. We note the very large value of the rate function compared to anomalous displacement, and how it 
depends on $D$. This confirms quantitatively the expectation that shock profiles are very stable and they 
are not easily perturbed except for displacements.

\begin{figure}
	\includegraphics[width=0.49\textwidth]
	{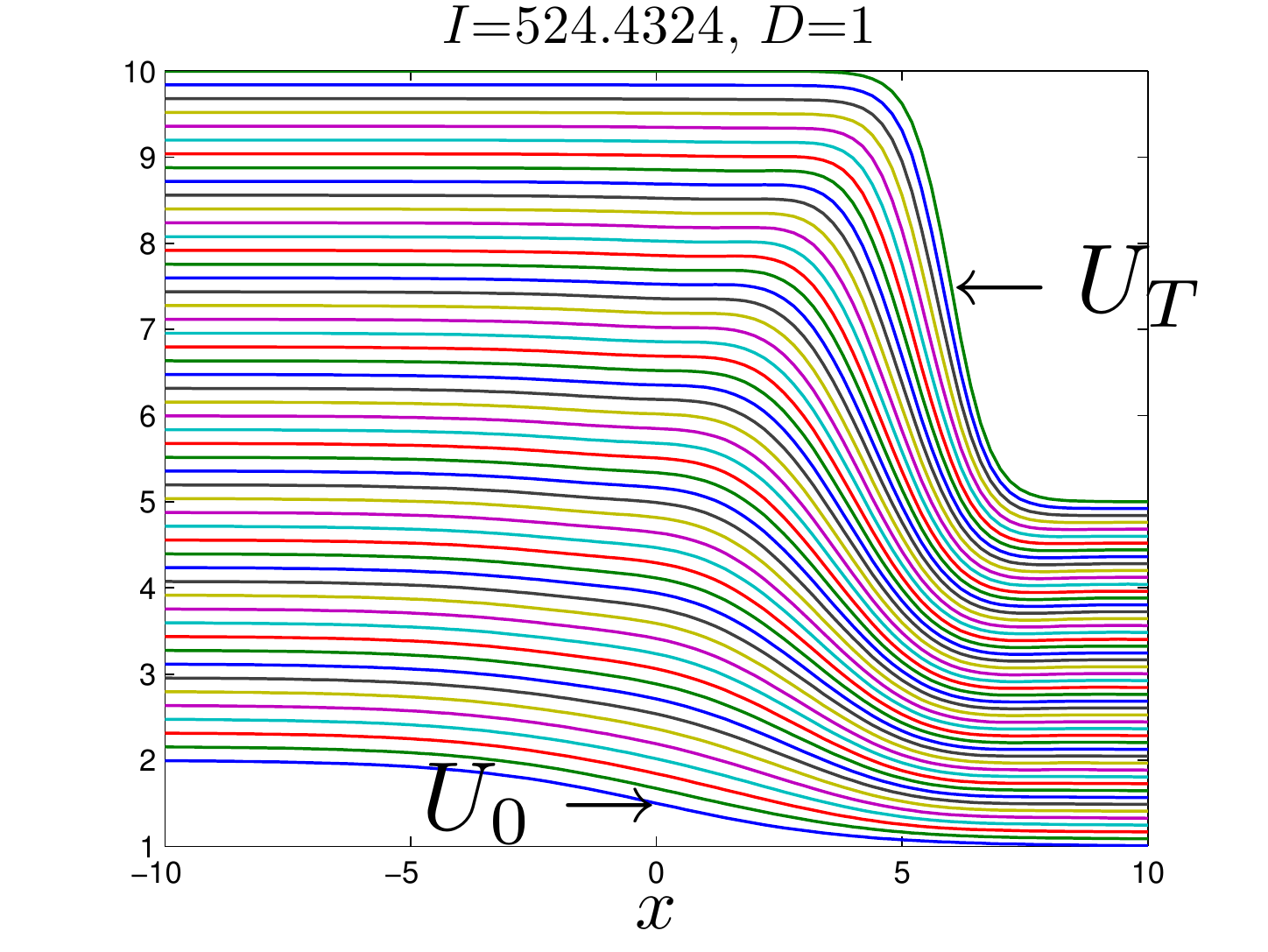}
	\includegraphics[width=0.49\textwidth]
	{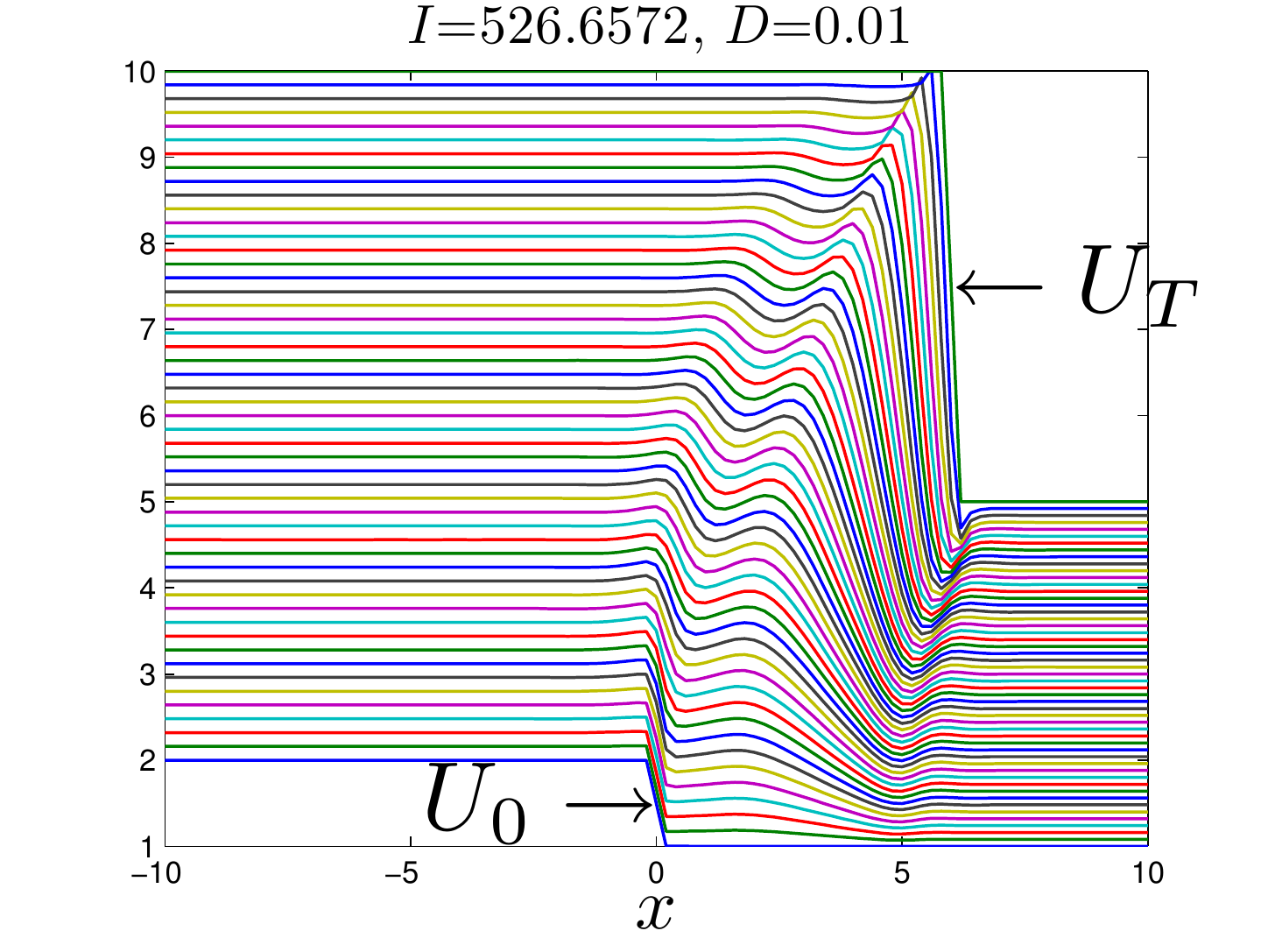}
	\caption{
	\label{fig:the plot of optimal paths, weak to strong}
	The optimal paths and their values of $I$ in (\ref{eq:discrete rate function}) in the case that $U_0$ is 
	a weak shock while $U_T$ is a strong shock. Here we let $U_0$ has a small difference in the boundary 
	values while $U_T$ has a large one. In each figure, we plot the curves indicating the optimal path at 
	time $0,\Delta t,2\Delta t,\ldots,T=1$.}
\end{figure}

\begin{figure}
	\includegraphics[width=0.49\textwidth]
	{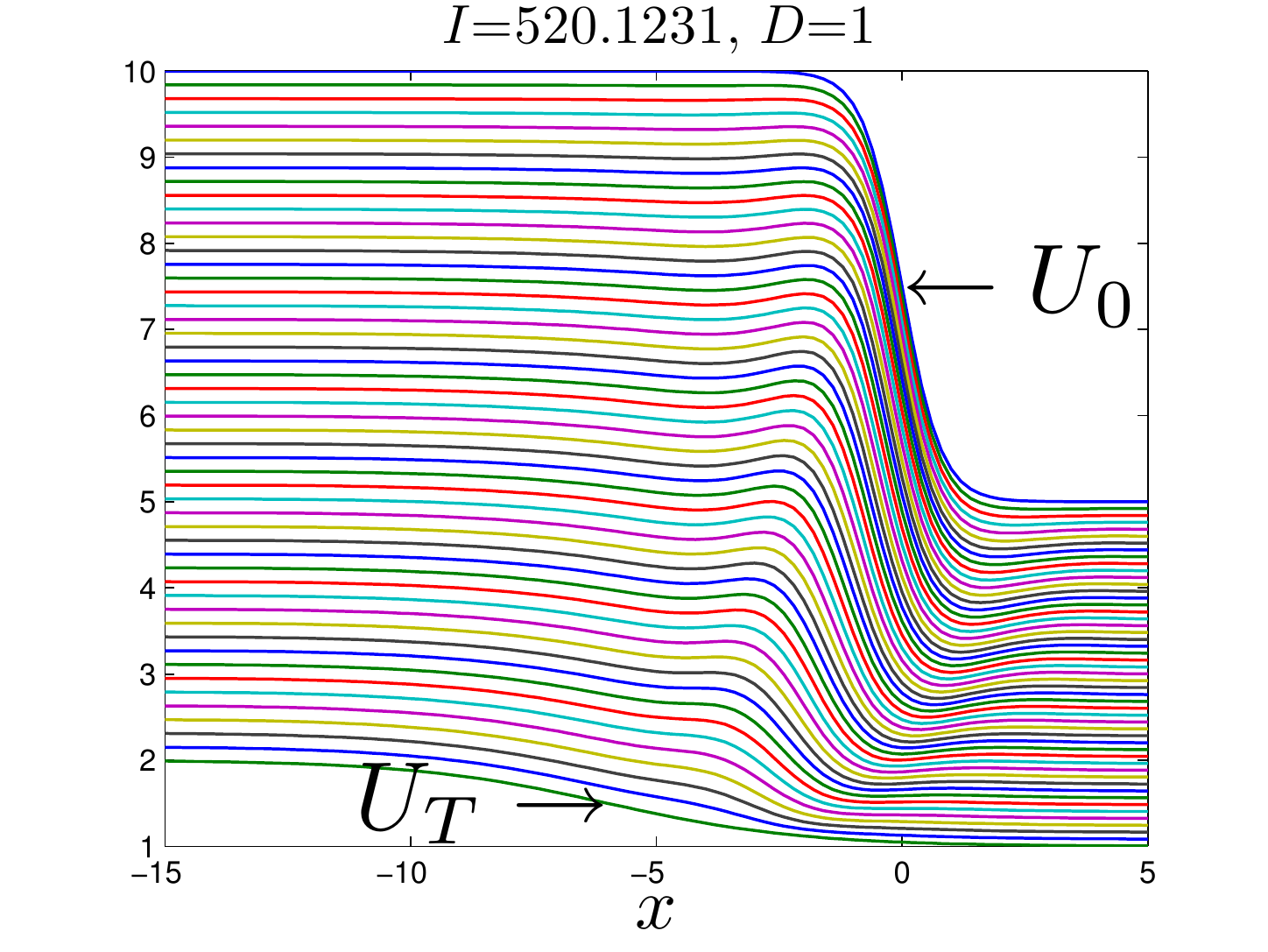}
	\includegraphics[width=0.49\textwidth]
	{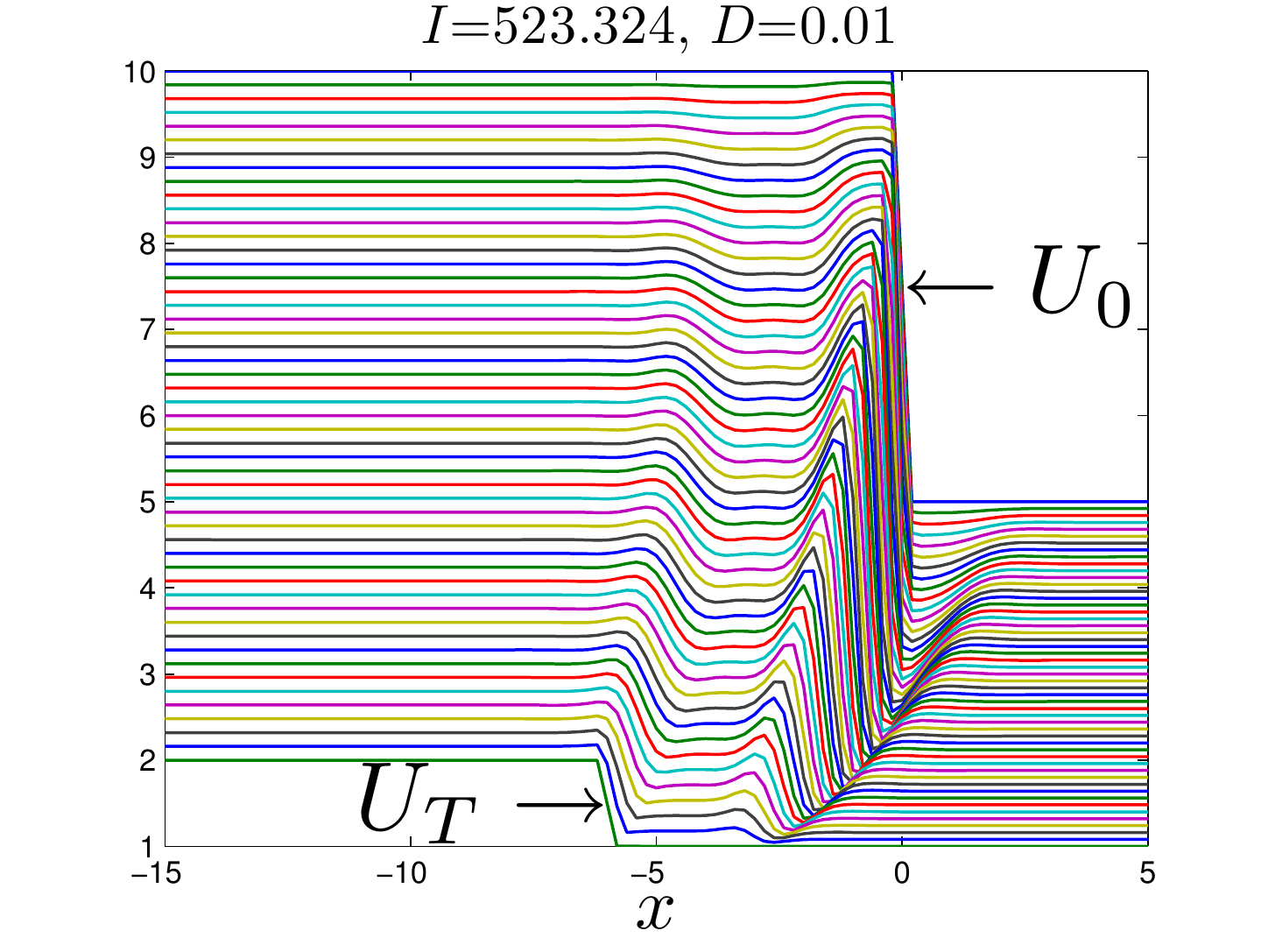}
	\caption{
	\label{fig:the plot of optimal paths, strong to weak}
	The optimal paths and their values of $I$ in (\ref{eq:discrete rate function}) in the case that $U_0$ is 
	a strong shock while $U_T$ is a weak shock. Here we let $U_0$ has a large difference in the boundary 
	values while $U_T$ has a small one. In each figure, we plot the curves indicating the optimal path at 
	time $0,\Delta t,2\Delta t,\ldots,T=1$.}
\end{figure}
\section{Direct numerical simulations with importance sampling}
\label{sec:importance sampling}

The large deviations probabilities calculated in Sections \ref{sec:displacement_general} and 
\ref{sec:discrete LDP} are only the exponential decay rates of the probabilities but not the actual 
probabilities. In this section we use Monte Carlo methods to compute the actual probabilities numerically.

\subsection{Burgers' equation with spatially correlated random perturbations}

We reformulate the discretized problem for Burgers' equation when we have spatially correlated random
perturbations. Given a traveling wave solution $U_0(x-\gamma t)$ of Burgers' equation with 
$\lim_{x\to\pm\infty}U_0(x)=u_\pm$, we transform to moving coordinates
\begin{equation}
	\label{eq:Burgers eqn in the moving coordinate for Monte Carlo}
	u_t + \Big(\frac{1}{2}(u-\gamma)^2\Big)_x = (Du_x)_x + \eps\dot{W}.
\end{equation}
Then $U_0(x)$ is a stationary traveling wave of 
(\ref{eq:Burgers eqn in the moving coordinate for Monte Carlo}). The rare event we consider is 
\[
	A_\delta = \{u\in {\cal E}^1  \mbox{ such that } u(0,\cdot)=U_0,\, \|u(T,\cdot)-U_0(\cdot-x_0)\|_{L^2}\leq \delta\}.
\]
Although for a discrete conservation law it is also possible to consider the other cases in Section 
\ref{sec:discrete LDP}, their probabilities are too small to compute by the basic Monte Carlo method so we 
omit them.

To compute $\mathbb{P}(u\in A_\delta)$ numerically, 
we discretize the space and time domains uniformly as in Section \ref{sec:LD for Euler}: 
$L=x_0<\cdots<x_M=R$, $M\Delta x=R-L$ and $0=t_0<\cdots<t_N=T$, $N\Delta t=T$. Here $Q_m^n$ denotes the average 
of $u$ over the $m$-th cell at time $n\Delta t$, and evolves by the Euler method:
\begin{equation}
	\label{eq:Euler method for Monte Carlo}
	Q_m^{n+1} 
	= Q_m^n - \frac{\Delta t}{\Delta x}(F_{m+\frac{1}{2}}^n-F_{m-\frac{1}{2}}^n)
	+ D\frac{\Delta t}{\left(\Delta x\right)^2}(Q_{m+1}^n-2Q_m^n+Q_{m-1}^n)
	+ \eps\Delta W_m^n,
\end{equation}
for $m=2,\ldots,M-1$, $ n=1,\ldots,N$,
where $F_{m\pm 1/2}^n(Q^n)$ are numerical fluxes of $(u-\gamma)^2/2$ constructed by Godunov's 
method and $(\Delta W_m^n)_{m=2,\ldots,M-1,n=1,\ldots,N}$ are Gaussian random variables with mean $0$ and covariance 
\[
	\mathbb{E}(\Delta W_{m_1}^{n_1}\Delta W_{m_2}^{n_2})
	=\begin{cases}
		\frac{\Delta t}{\Delta x}C_{m_1 m_2},	\quad &n_1=n_2,\\
		0,										\quad &\text{otherwise.}
	\end{cases}
\]
In order to make the problem more realistic, we assume that the variables $\Delta W_m^n$ are spatially correlated: 
$C_{m_1 m_2}=\sigma^2\exp(-\frac{1}{l_c}|x_{m_1}-x_{m_2}|)$ and $C=(C_{ij})_{i,j=2}^{M-1}=\Phi\Phi^T$. 
Finally we impose the initial and boundary conditions: $Q_m^0=U_0(x_{m-1/2})$,  $Q_1^n=u_-$ and $Q_M^n=u_+$.

\subsection{Introduction to importance sampling}

To estimate $\PP(u \in A_\delta)$, we may use the basic Monte Carlo method. The Monte Carlo 
 strategy is as follows. We generate $K$ independent samples 
$\Delta W^{(k)} = (\Delta W_m^{n,(k)})_{m=2,\ldots,M-1,n=1,\ldots,N}$, $k=1,\ldots,K$,
of the Gaussian vector $(\Delta W_m^n)_{m=2,\ldots,M-1,n=1,\ldots,N}$, which give $K$ independent samples 
$Q^{(k)} = (Q_m^{n,(k)})_{m=1,\ldots,M,n=1,\ldots,N}$, $k=1,\ldots,K$ of the random vector $Q= (Q_m^{n})_{m=1,\ldots,M,n=1,\ldots,N}$. The basic Monte Carlo estimator is 
\begin{equation}
	\label{eq:plain Monte Carlo estimator}
	\hat{P}^{MC}=\frac{1}{K}\sum_{k=1}^K 1_{A_\delta}(Q^{(k)})
\end{equation}
where 
\begin{equation}
Q \in A_\delta \mbox{  if and only if }
\Delta x\sum_{m=1}^M [Q_m^{N} - U_0(x_{m-\frac{1}{2}}-x_0)]^2 \leq \delta^2.
\end{equation} 
In other words, 
$\hat{P}^{MC}$ is the empirical frequency that $Q^{(k)}\in A_\delta$. It is an unbiased estimator
$\EE [ \hat{P}^{MC} ] =\PP( Q\in A_\delta)$. By the law of large numbers, it is strongly convergent 
$\hat{P}^{MC}\to \PP(Q \in A_\delta)$ almost surely as $K\to\infty$.
Its variance is given by
\begin{align*}
 \mathbf{Var} (\hat{P}^{MC} ) = \frac{1}{K}  \mathbf{Var} ( 1_{A_\delta}(Q) )= 
 \frac{1}{K} \big (\PP(Q\in A_\delta)-\PP(Q\in A_\delta)^2 \big).
\end{align*}
In order to have a meaningful estimation, the standard deviation 
of the estimator and $\PP(Q \in A_\delta)$  should 
be of the same order. Namely, the relative error
\[
\frac{ \mathbf{Var}^{\frac{1}{2}}  (\hat{P}^{MC} )}{\PP(Q \in A_\delta)}= 
 \frac{1}{\sqrt{K}}\left(\frac{1}{\PP(Q\in A_\delta)}-1\right)^\frac{1}{2}
\]
should be of order one (or smaller).
This means that the number $K$ of Monte Carlo samples should be at least of the order of the reciprocal
of the probability $\PP(Q \in A_\delta)$.
We note that for $\eps$ small, $\PP(Q\in A_\delta)$ decreases exponentially and so $K$ should be 
increased exponentially; the exponential growth of $K$ makes the basic Monte Carlo method computationally 
infeasible.

The well established way to overcome the difficulty of calculating rare event probabilities is to use 
importance sampling. The problem with basic Monte Carlo is that for small $\eps$ there are very few samples 
in $A_\delta$ under the original measure $\PP$ so the estimator is inaccurate. In importance sampling we 
change the original measure so that there is a significant fraction of $Q^{k}$ in $A_\delta$ under this 
new measure $\QQ$, even for small $\eps$. Since we use the biased measure $\QQ$ to generate $Q^{(k)}$, 
it is necessary to weight the simulation outputs in order to get an unbiased estimator of $\PP(Q \in A_\delta)$.
The correct weight is the  likelihood ratio since we have:
\[
	\PP(Q\in A_\delta)
	=\EE_\PP[1_{A_\delta}(Q)]
	=\EE_\QQ\big[1_{A_\delta}(Q)\frac{d\PP}{d\QQ}(Q)\big].
\]
Then the importance sampling estimator is
\begin{equation}
	\label{eq:general importance sampling estimator}
	\hat{P}^{IS}=\frac{1}{K}\sum_{k=1}^K 1_{A_\delta}(Q^{(k)})\frac{d\PP}{d\QQ}(Q^{(k)}),
\end{equation}
where $Q^{(k)}$ are generated under $\QQ$. The estimator $\hat{P}^{IS}$ is unbiased 
$\EE_\QQ[\hat{P}^{IS}]=\PP(Q\in A_\delta)$ and its 
variance is
\begin{equation*}
\mathbf{Var}_\QQ (\hat{P}^{IS})
=\frac{1}{K}\mathbf{Var}_\QQ \big(1_{A_\delta}(Q)\frac{d\PP}{d\QQ}(Q)\big)
\end{equation*}
The main issue in importance sampling is how to choose a good $\QQ$ to have a low 
$\mathbf{Var}_\QQ[1_{A_\delta}(Q )\frac{d\PP}{d\QQ}(Q )]$. In many cases (see for example, 
\cite{Siegmund1976,Sadowsky1990,Chen1993,Sadowsky1996}), it can be shown that the change of measure 
suggested by the most probable path of the LDP is asymptotically optimal as $\eps\to 0$. However, it is also 
well-known that in some cases (see \cite{Glasserman1997}), the estimator by this strategy is worse than the 
basic Monte Carlo, and may even have infinite variance. However, because the rare event $A_\delta$ is convex and 
the discrete rate function $I$ is (numerically tested) essentially convex, the importance sampling estimator 
by this strategy is expected to be asymptotically optimal and we will see that it indeed works very well.

\subsection{Importance sampling based on the most probable path}

In this subsection we implement the importance sampling by using a biased distribution
centered on the most probable path obtained in Section 
\ref{sec:discrete LDP}. From Section \ref{sec:LD for Euler}, $\bar{Q} :=\arg\inf_{Q \in A_\delta}I(Q )$ 
is the most probable path as $\eps\to 0$ by the large deviation principle. We choose $\bar{h}=(\bar{h}^n)_{n=1,\ldots,N} = (\bar{h}^n_m)_{m=2,\ldots,M-1,n=1,\ldots,N} $ 
such that
\begin{align}
	\label{eq:residual of the optimal path}
	\bar{Q}_m^{n+1} 
	&= \bar{Q}_m^n - \frac{\Delta t}{\Delta x}(F_{m+\frac{1}{2}}(\bar{Q}^n)-F_{m-\frac{1}{2}}(\bar{Q}^n))\\
	&\quad + D\frac{\Delta t}{\left(\Delta x\right)^2}(\bar{Q}_{m+1}^n-2\bar{Q}_m^n+\bar{Q}_{m-1}^n) 
	+ (\Phi \bar{h}^n)_m, \notag
\end{align}
for $m=2,\ldots,M-1$, $n=1,\ldots,N$.

Assume that under the probability $\QQ$, the vector $\Delta W^n:=(\Delta W^n_m)_{m=2,\ldots,M-1}$ 
in (\ref{eq:Euler method for Monte Carlo}) is multivariate Gaussian 
$\mathcal{N}(\eps^{-1}\Phi \bar{h}^n, \frac{\Delta t}{\Delta x}C)$ and 
$\mathbf{Cov}(\Delta W_{m_1}^{n_1},\Delta W_{m_2}^{n_2})=0$ if $n_1\neq n_2$. 
Denoting $\Delta W= (\Delta W^n)_{n=1,\ldots,N}$, the likelihood ratio 
$d\PP/d\QQ$ can be computed explicitly:
\begin{equation}
	\label{eq:change of measure}
	\frac{d\PP}{d\QQ}(\Delta W ) = \exp\left( -\frac{1}{2} \sum_{n=1}^N 
	[\|\Phi^{-1}\Delta W^n\|_2^2 - \|\Phi^{-1}\Delta W^n - \eps^{-1}\bar{h}^n\|_2^2  \right) .
\end{equation}
Note that under $\QQ$, 
(\ref{eq:Euler method for Monte Carlo}) can be written as
\begin{equation}
	\label{eq:Euler scheme under the biased measure}
	Q_m^{n+1} 
	= Q_m^n - \frac{\Delta t}{\Delta x}(F_{m+\frac{1}{2}}^n-F_{m-\frac{1}{2}}^n)
	+ D\frac{\Delta t}{\left(\Delta x\right)^2}(Q_{m+1}^n-2Q_m^n+Q_{m-1}^n)
	+ (\Phi \bar{h}^n)_m + \eps\Delta \tilde{W}_m^n,
\end{equation}
where $\Delta \tilde{W}= (\Delta \tilde{W}_m^n)_{m=2,\ldots,M-1,n=1,\ldots,N}$ is zero-mean, Gaussian with the spatial covariance $\frac{\Delta t}{\Delta x}C$, and white in 
time. Then (\ref{eq:change of measure}) can be written as 
\begin{equation}
	\label{eq:change of measure under the biased measure}
	\frac{d\PP}{d\QQ}(\Delta \tilde{W} ) = \exp\left( -\frac{1}{2} \sum_{n=1}^N 
	 \|\Phi^{-1}\Delta \tilde{W} ^n + \eps^{-1}\bar{h}^n\|_2^2 - \|\Phi^{-1}\Delta \tilde{W} ^n\|_2^2  \right).
\end{equation}
In summary, the importance sampling Monte Carlo strategy  is implemented as follows:
\begin{enumerate}
	\item Compute the optimal path $\bar{Q}=\arg\inf_{Q \in A_\delta}I(Q)$ and its residual 
	$\bar{h}$ from (\ref{eq:residual of the optimal path}).
	
	\item Sample $K$  independent $\Delta \tilde{W}^{(k)}$ with the zero-mean, Gaussian distribution 
	with the covariance $\frac{\Delta t}{\Delta x}C$ in space and white in time. Compute the corresponding $Q^{(k)}$ by 
	(\ref{eq:Euler scheme under the biased measure}).
	
	\item The importance sampling estimator is 
	\[
		\hat{P}^{IS}=\frac{1}{K}\sum_{k=1}^K 1_{A_\delta}(Q^{(k)})
		\frac{d\PP}{d\QQ}(\Delta \tilde{W}^{(k)}),
	\]
	where $\frac{d\PP}{d\QQ}(\Delta \tilde{W})$ is defined in 
	(\ref{eq:change of measure under the biased measure}).
\end{enumerate}

\subsection{Simulations with importance sampling}

We consider three estimators: the basic Monte estimator $\hat{P}^{MC}$ and two importance sampling 
estimators: $\hat{P}^{IS}_0$ and $\hat{P}^{IS}_\delta$, where $\hat{P}^{IS}_0$ uses $\bar{h}$ in 
(\ref{eq:residual of the optimal path}) with $\bar{Q} =\arg\inf_{Q \in A_0}I(Q_m^n)$ while 
$\hat{P}^{IS}_\delta$ uses $\bar{h}$ in (\ref{eq:residual of the optimal path}) with 
$\bar{Q} =\arg\inf_{Q \in A_\delta}I(Q )$. The parameters for the simulations are listed in Table 
\ref{tab:parameter of Monte Carlo}.
\begin{table}
	\begin{center}
		\begin{tabular}{|c|c|c|c|c|c|c|}
			\hline
			$\Delta x=0.5$ & $\Delta t=0.05$ & $L=-15$ & $R=20$ & $T=1$ & $u_{-}=2$ & $u_{+}=1$\\
			\hline
			$\gamma=1.5$ & $D=1$ & $x_{0}=5$ & $\delta=\sqrt{0.5}$ & $\sigma=1$ & $l_{c}=5$ & $K=10^{4}$\\
			\hline
		\end{tabular}
	\end{center}
	\caption{
	\label{tab:parameter of Monte Carlo}
	The parameters for the Monte Carlo simulations.}
\end{table}

As we mention before, we only test the wave displacement with $x_0=5$ and $D=1$ because the probabilities of 
the other cases in Section \ref{sec:discrete LDP} are too small for $\hat{P}^{MC}$ to have meaningful 
samples.

First we find the optimal paths for $\inf_{Q \in A_0}I(Q )$ and $\inf_{Q \in A_\delta}I(Q )$. 
As before, $\inf_{Q \in A_0}I(Q )$ can be modeled as an unconstrained optimization problem and we 
solve it by the BFGS method while we solve $\inf_{Q \in A_\delta}I(Q )$ by  
sequential quadratic programming (SQP) \cite{Nocedal2006}.

From Fig.\ref{fig:optimal paths for inf_A I and inf_A_delta I} we note that $\inf_{Q \in A_0}I(Q )$ is 
much smaller than the corresponding one with spatially white noise. This is because with the correlated 
noise, it is easier to have simultaneous increments. In addition, $\inf_{Q \in A_\delta}I(Q )$ is 
significantly different from $\inf_{Q \in A_0}I(Q)$ because $\delta=\sqrt{0.5}$ is not very small. 
we will see that this difference significantly affects the performances of $\hat{P}^{IS}_0$ and 
$\hat{P}^{IS}_\delta$.

\begin{figure}
	\centering
	\includegraphics[width=0.49\textwidth]{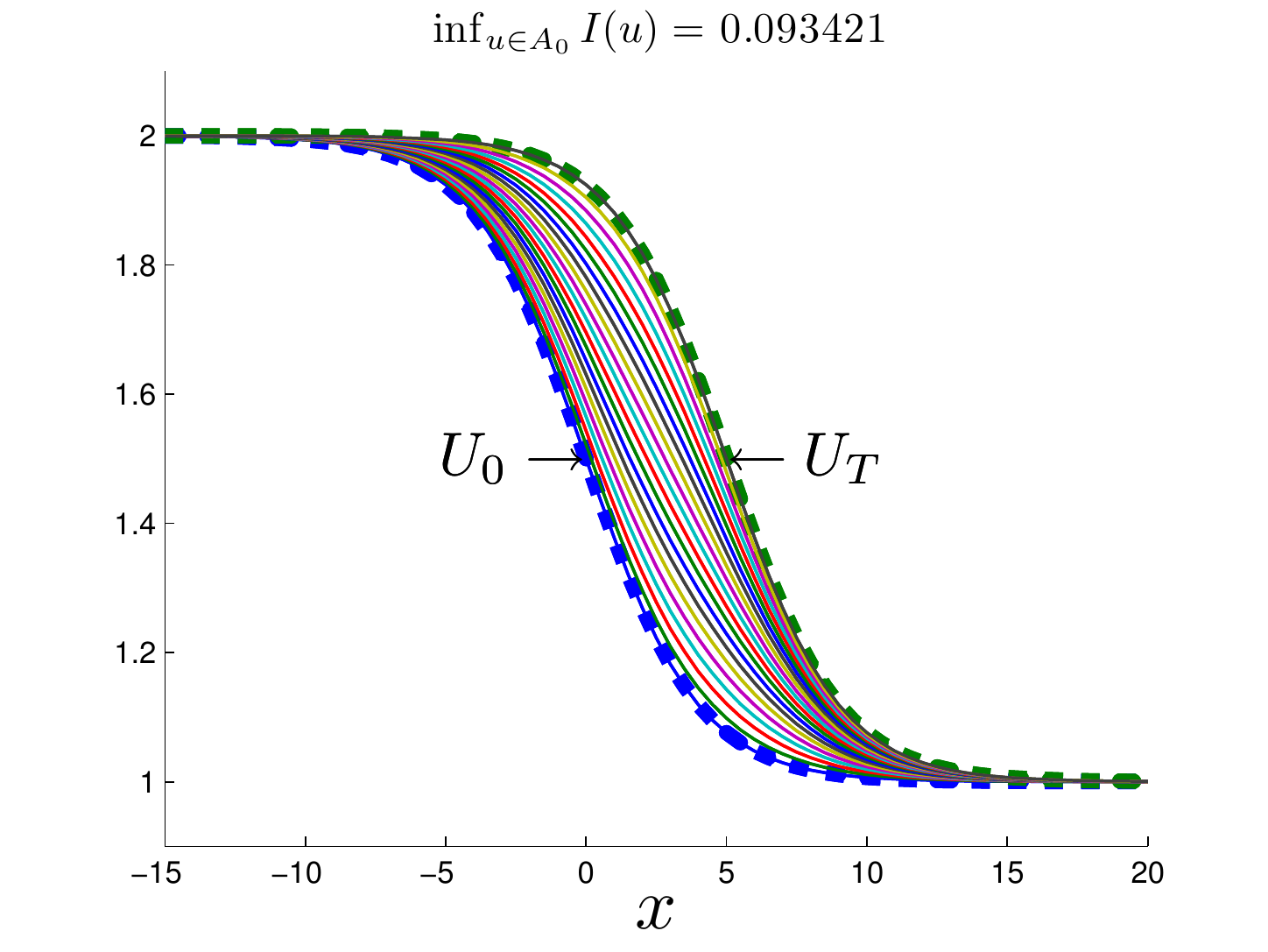}
	\includegraphics[width=0.49\textwidth]{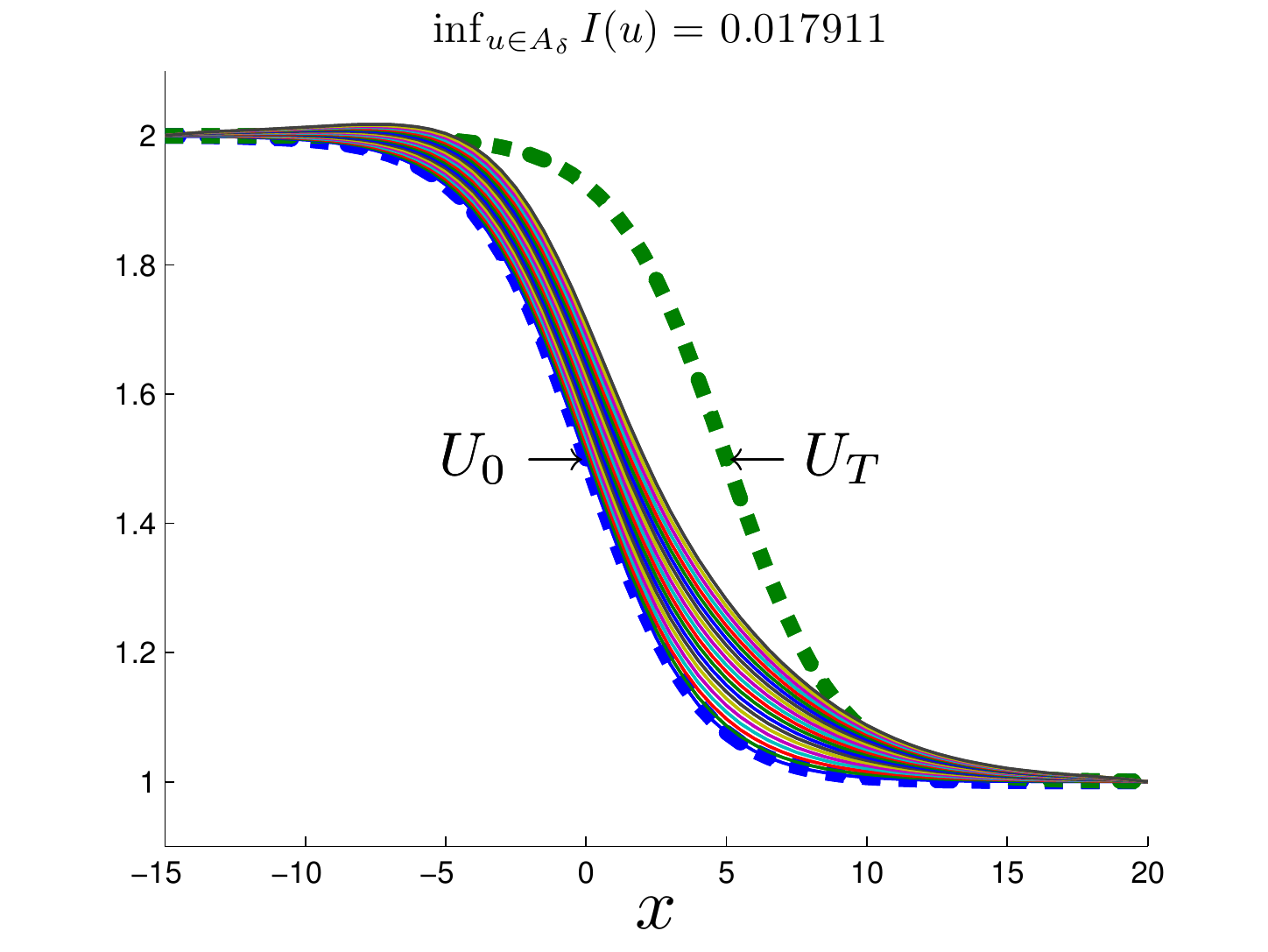}
	\caption{
	\label{fig:optimal paths for inf_A I and inf_A_delta I}
	The optimal paths for $\inf_{Q\in A_0}I(Q)$ and $\inf_{Q \in A_\delta}I(Q)$. With the 
	spatially correlated noise, $\inf_{Q\in A_0}I(Q)$ is much lower than the one with the spatially  
	white noise (see Fig.\ref{fig:the plot of optimal paths, shifted profiles with high D}).}
\end{figure}

Once $\arg\inf_{Q \in A_0}I(Q)$ and $\arg\inf_{Q \in A_\delta}I(Q)$ are obtained, we can 
construct $\hat{P}^{IS}_0$ and $\hat{P}^{IS}_\delta$. We estimate $\PP(Q\in A_\delta)$
 by these three estimators for $100$ different $\eps$ taken uniformly in $[0.01, 0.2]$. For each $\eps$ 
and estimator, we use $K=10^4$ samples. 
The (numerical) $99\%$ confidence 
intervals are defined by
$
\big[ {\rm Mean}_K -2.6\, {\rm Std}_K, {\rm Mean}_K +2.6\, {\rm Std}_K\big]
$
where the (numerical) mean and standard deviation are 
$$
{\rm Mean}_K =\frac{1}{K} \sum_{k=1}^K p^{(k)},Ê\quad \quad
 {\rm Std}_K^2 =\frac{1}{K} \sum_{k=1}^K (p^{(k)})^2 - {\rm Mean}_K^2,
$$
with $p^{(k)}={\bf 1}_{A_\delta}(Q^{(k)})$ for $\hat{P}^{MC}$ and 
$p^{(k)}=d\PP/d\QQ(\Delta \tilde{W}^{(k)}){\bf 1}_{A_\delta}(Q^{(k)})$ for $\hat{P}^{IS}$.
We find that $\hat{P}^{IS}_\delta$ has the best performance and 
$\hat{P}^{IS}_0$ also has the good performance for $0.1\leq\eps\leq 0.2$. 
For $\epsilon < 0.1$, because $\arg\inf_{Q \in A_0}I(Q )$ is not the optimal path, $\hat{P}^{IS}_0$ is even worse 
than $\hat{P}^{MC}$ due to the inappropriate change of measure.
For $\epsilon < 0.1$, because $\arg\inf_{Q \in A_\delta}I(Q )$ is the optimal path, $\hat{P}^{IS}_\delta$ 
dramatically outperforms $\hat{P}^{MC}$.
This shows that large-deviations-driven importance sampling strategies can be efficient
to estimate rare event probabilities in the context of perturbed scalar conservation laws.

\begin{figure}
	\centering
	\includegraphics[width=0.32\textwidth]{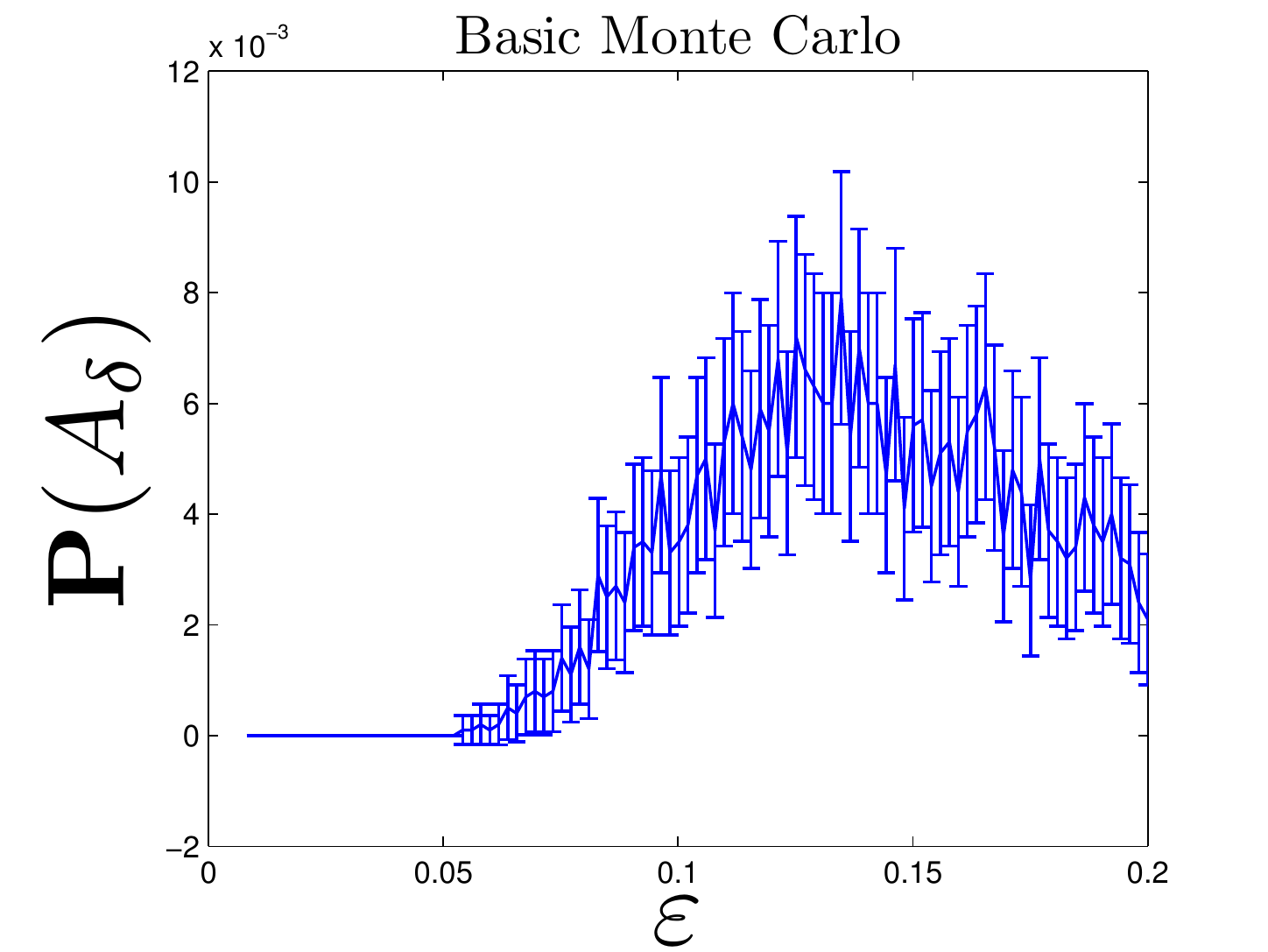}
	\includegraphics[width=0.32\textwidth]{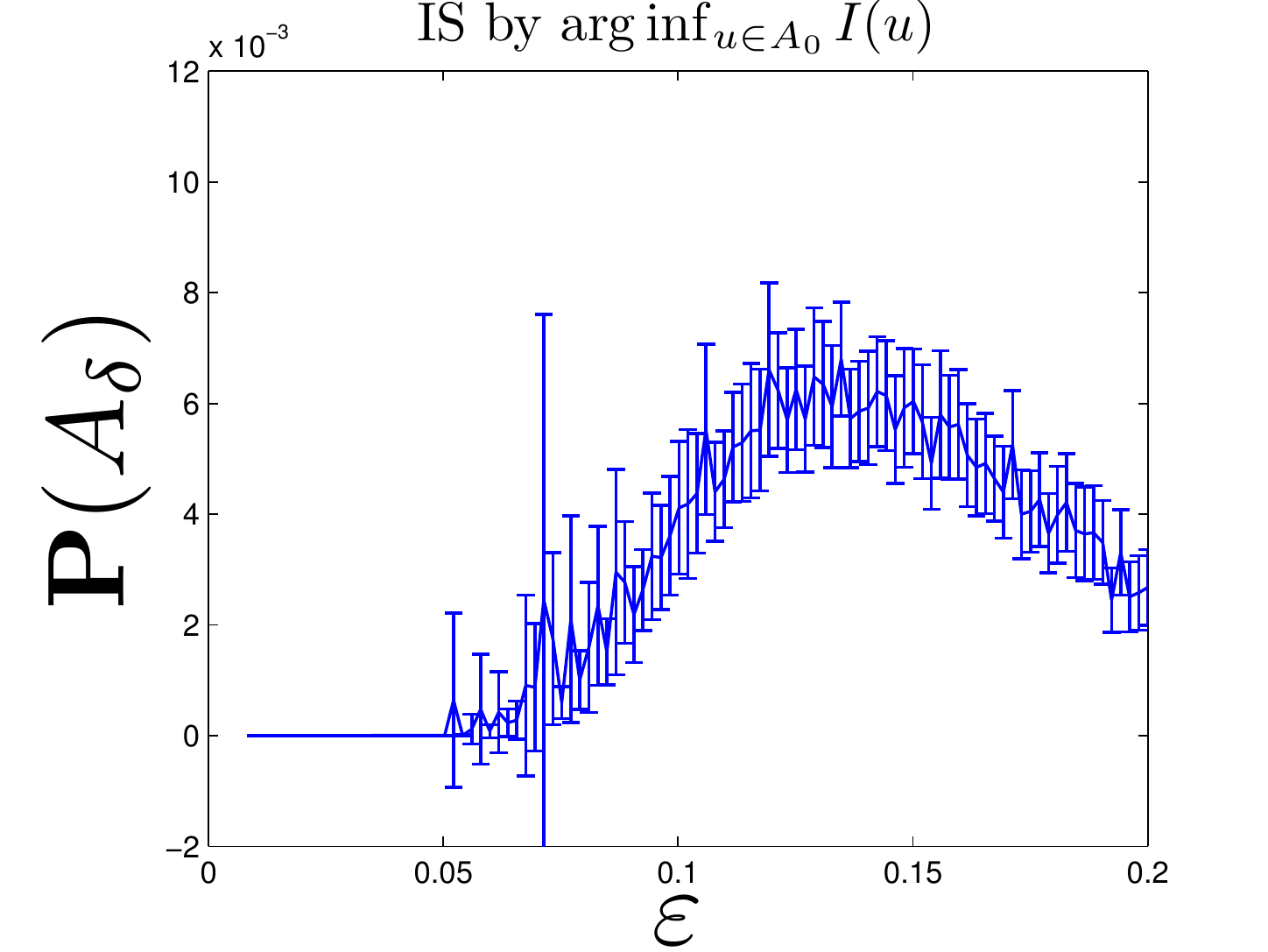}
	\includegraphics[width=0.32\textwidth]{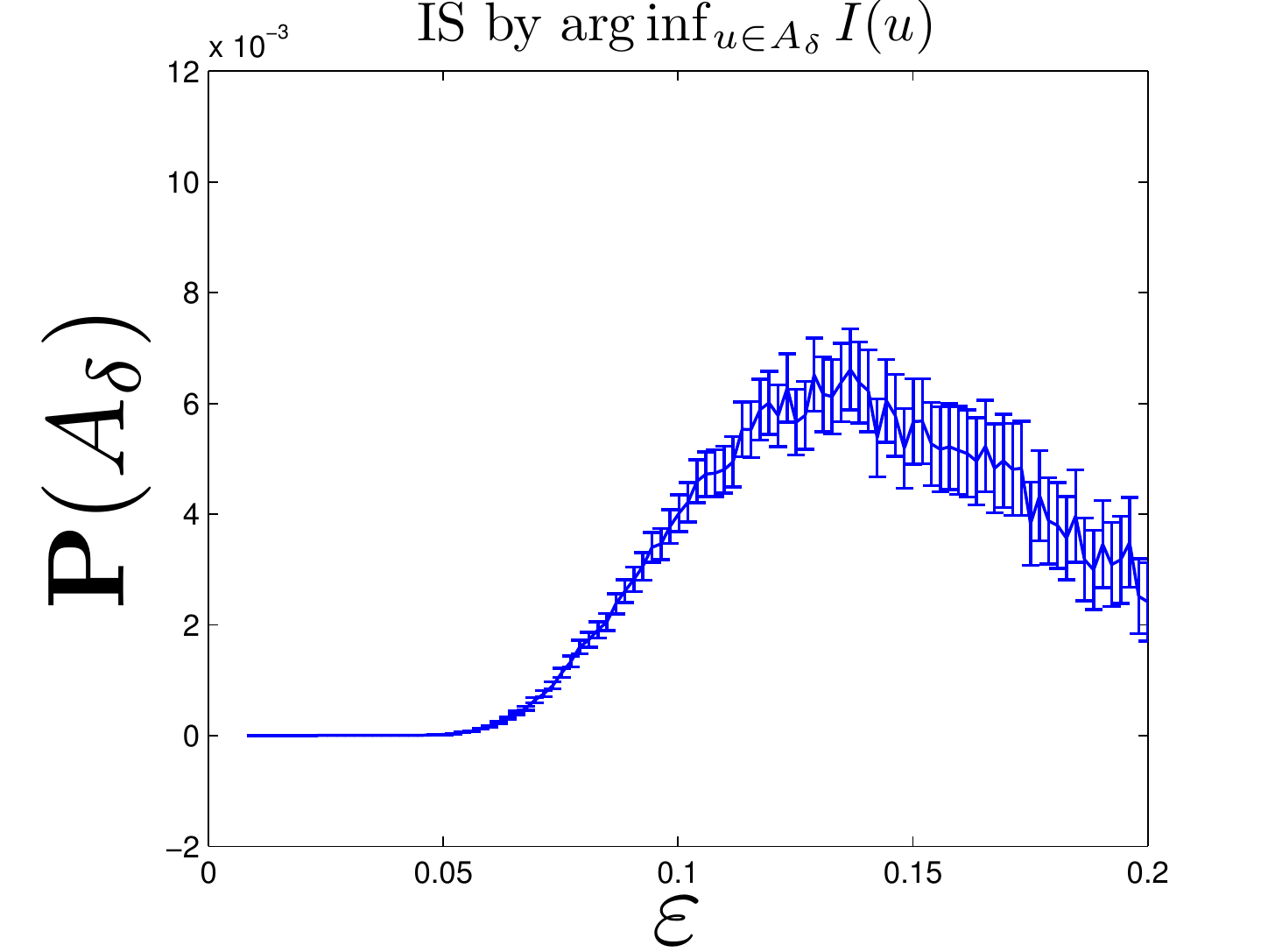}
	\caption{
	\label{fig:confidence interval of the estimators}
	The estimated probabilities and their $99\%$ confidence intervals by $\hat{P}^{MC}$, $\hat{P}^{IS}_0$ 
	and $\hat{P}^{IS}_\delta$. $\hat{P}^{IS}_\delta$ has the best performance and $\hat{P}^{IS}_0$ also has 
	the good performance for $0.1\leq\eps\leq 0.2$. However $\hat{P}^{IS}_0$ works poorly for 
	$\epsilon<0.1$.}
\end{figure}

\begin{figure}
	\centering
	\includegraphics[width=0.32\textwidth]{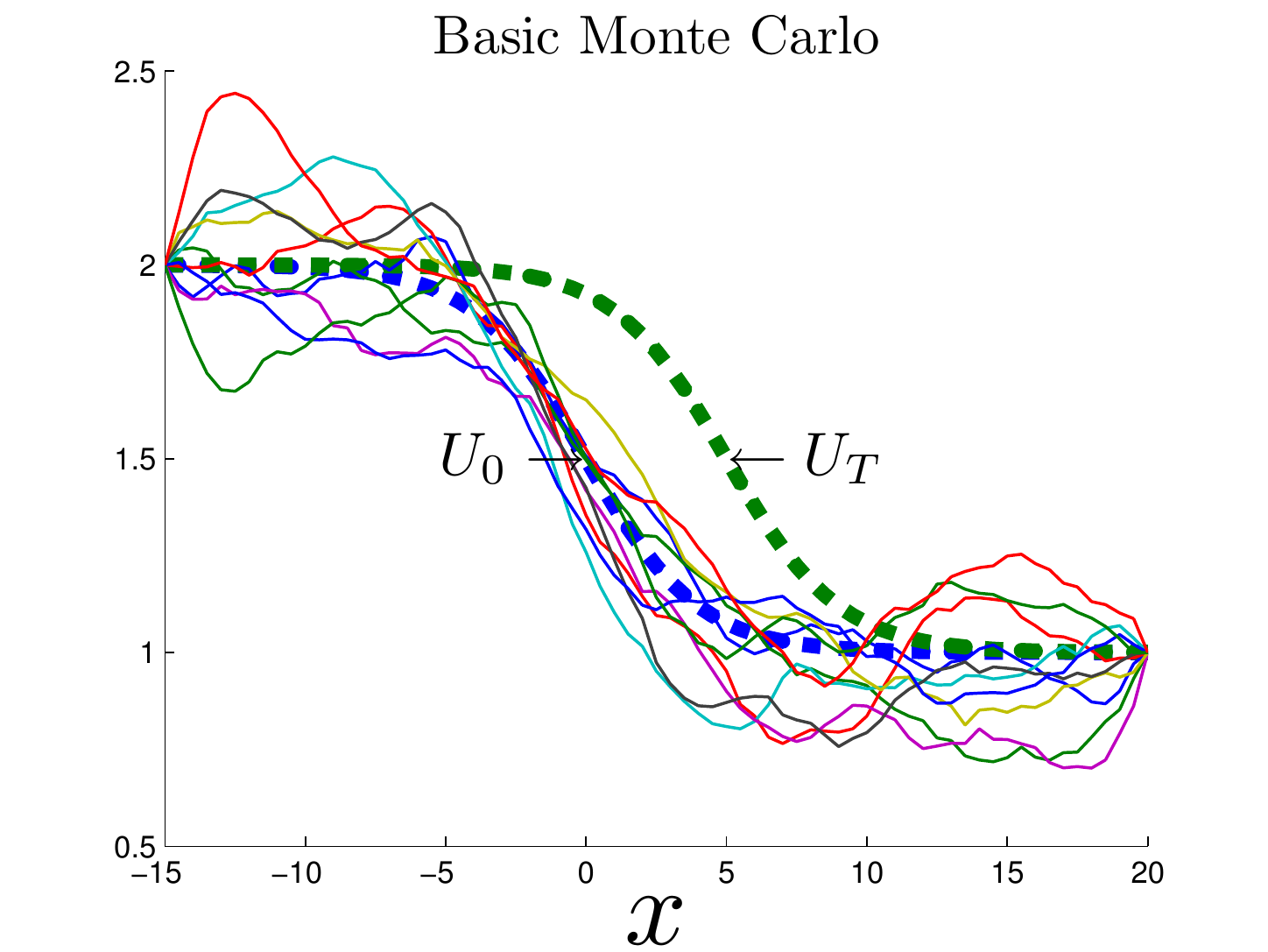}
	\includegraphics[width=0.32\textwidth]{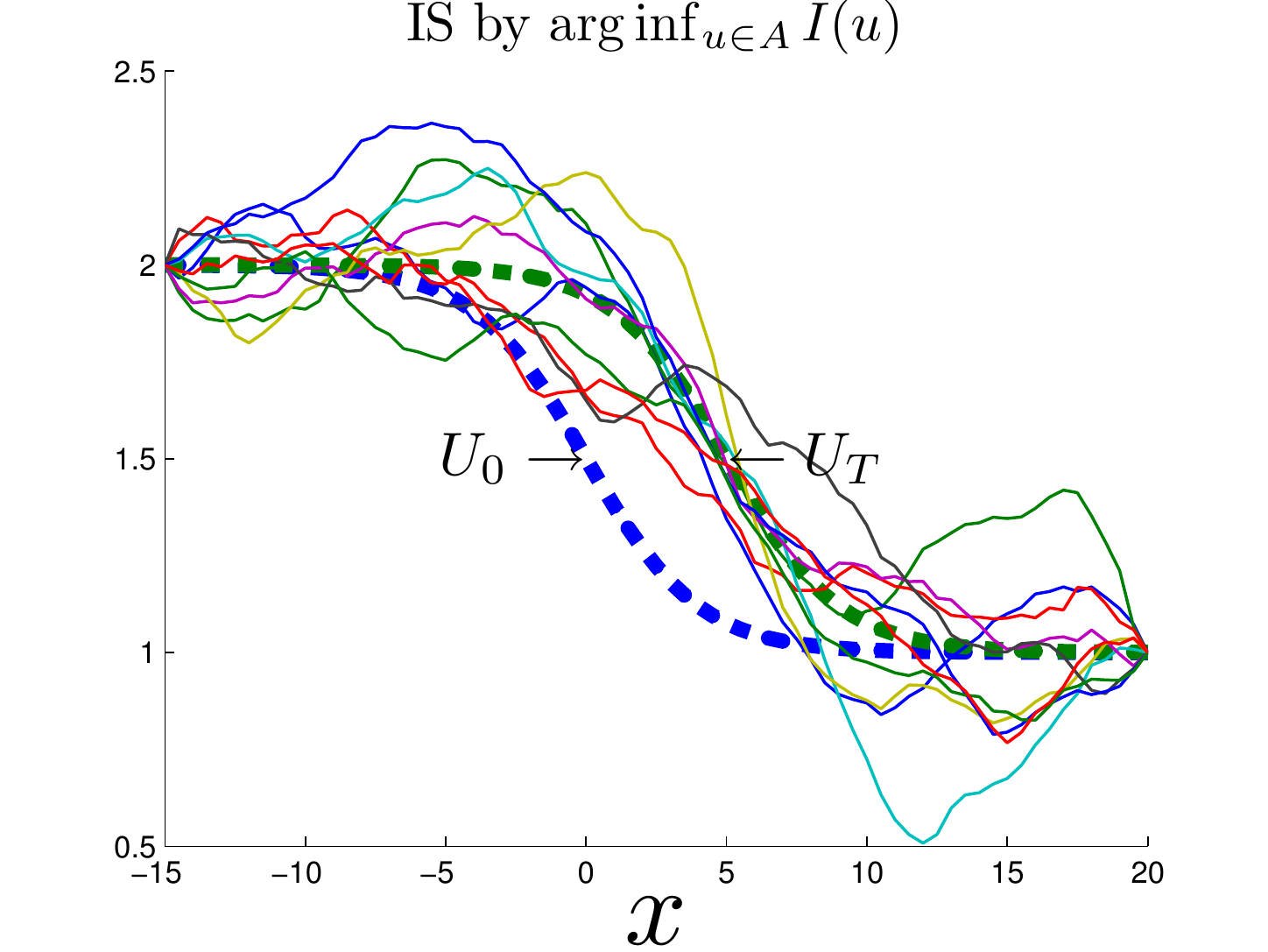}
	\includegraphics[width=0.32\textwidth]{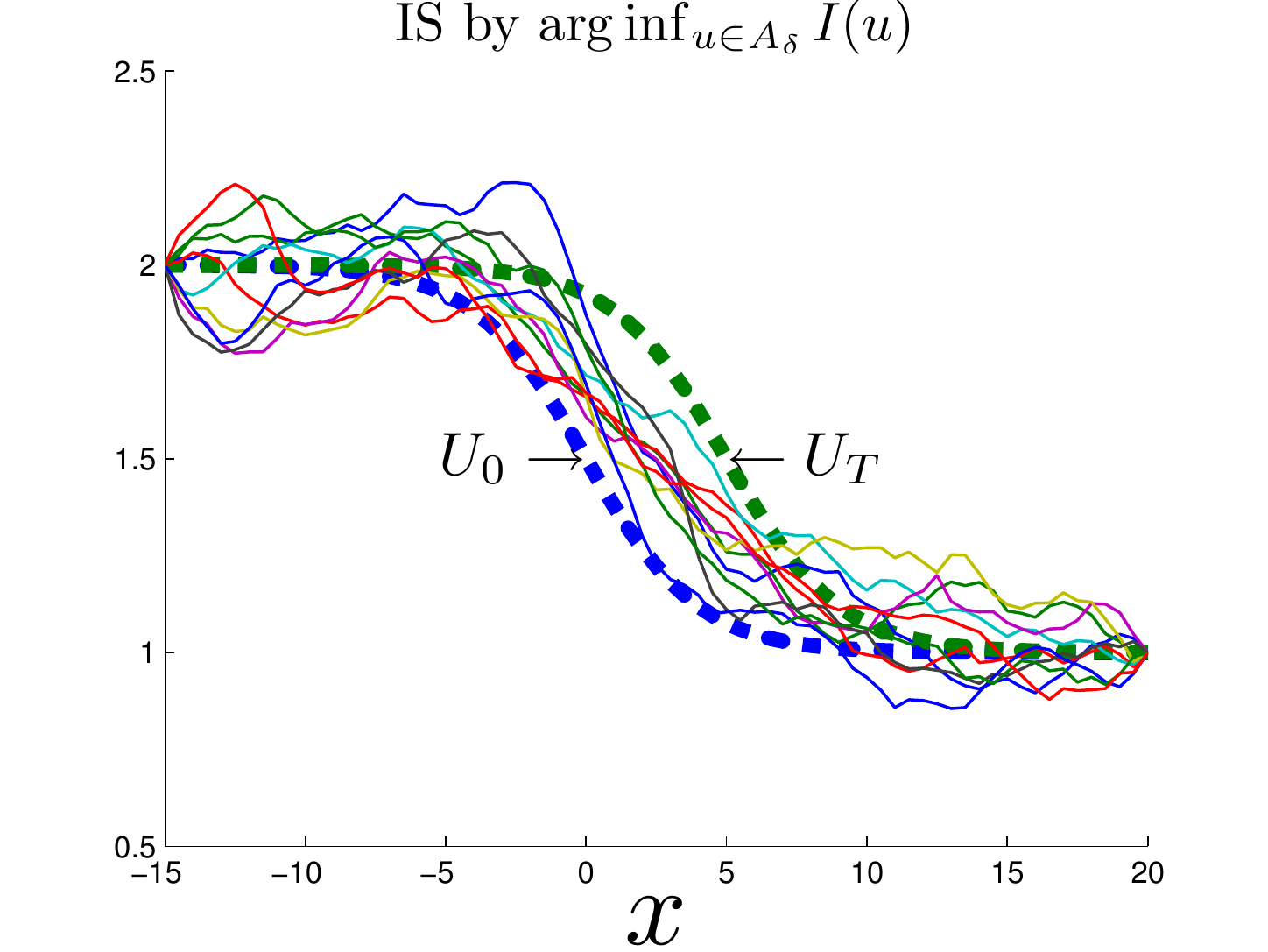}
	\caption{
	\label{fig:sample path of different measures}
	Sample paths under these three probability measures with $\eps=0.1$.}
\end{figure}

We plot the estimated probabilities and the relative error, the ratio of the (numerical) standard 
deviation to the (numerical) mean, in the log scale. Note that in the extreme case, the estimated 
probability is dominated by the value $p^{(k_0)}$ of one realization over $K$ ($p^{(k_0)}=1$ for 
$\hat{P}^{MC}$ and $p^{(k_0)}=d\PP/d\QQ(\Delta \tilde{W}^{(k_0)})$ for $\hat{P}^{IS}$), and the numerical 
variance is approximately $(p^{(k_0)})^2/K$. Therefore the relative error is about $\sqrt{K}=100$. This is 
why the curves of the relative errors in Fig.\ref{fig:P(A_delta) and the relative error in the log scale} 
saturate at $100$, and it also tells us that when the relative error reaches $\sqrt{K}$, the estimator is in 
the extreme case and so is not reliable.

\begin{figure}
	\centering
	\includegraphics[width=0.49\textwidth]{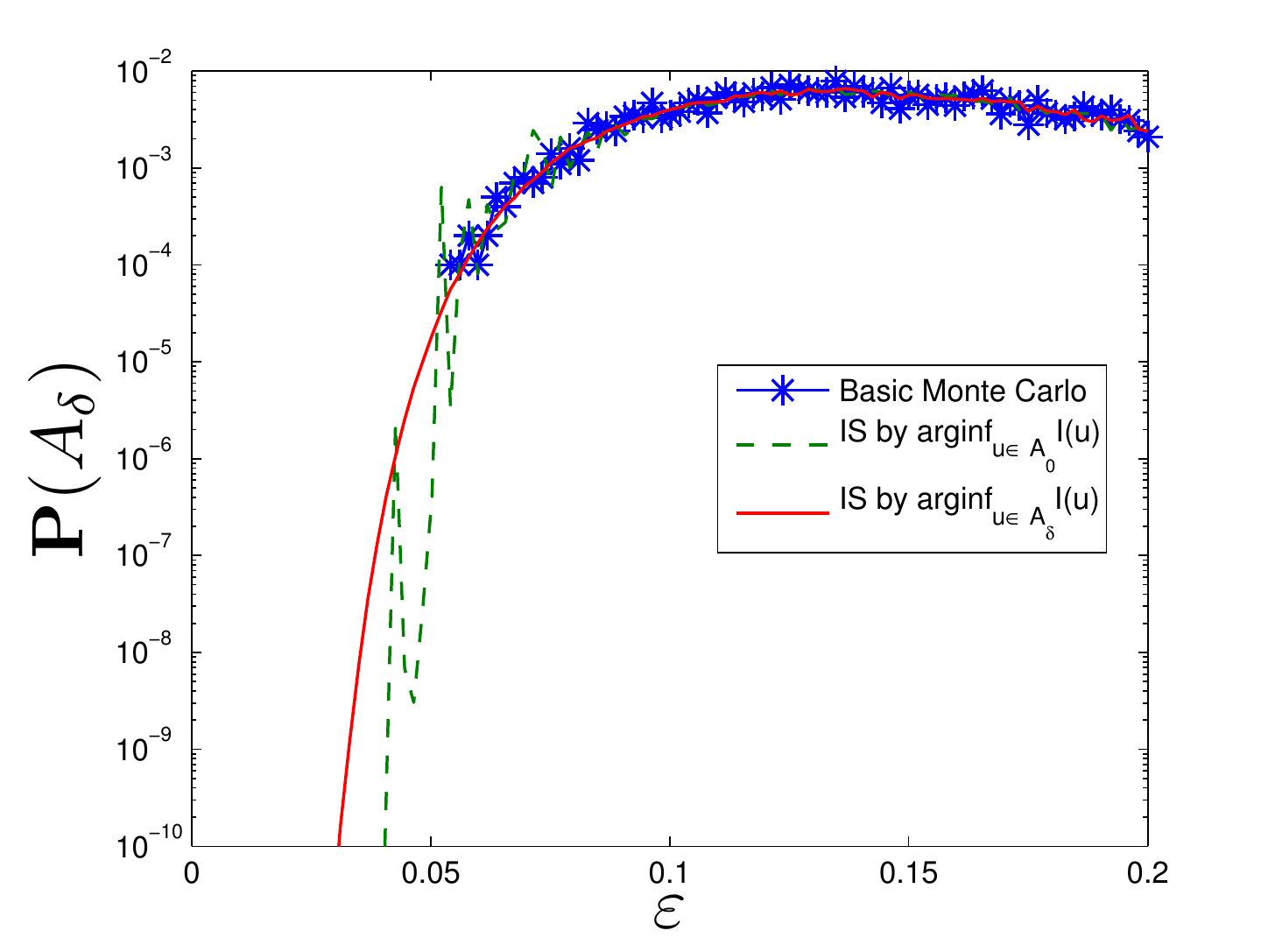}
	\includegraphics[width=0.49\textwidth]{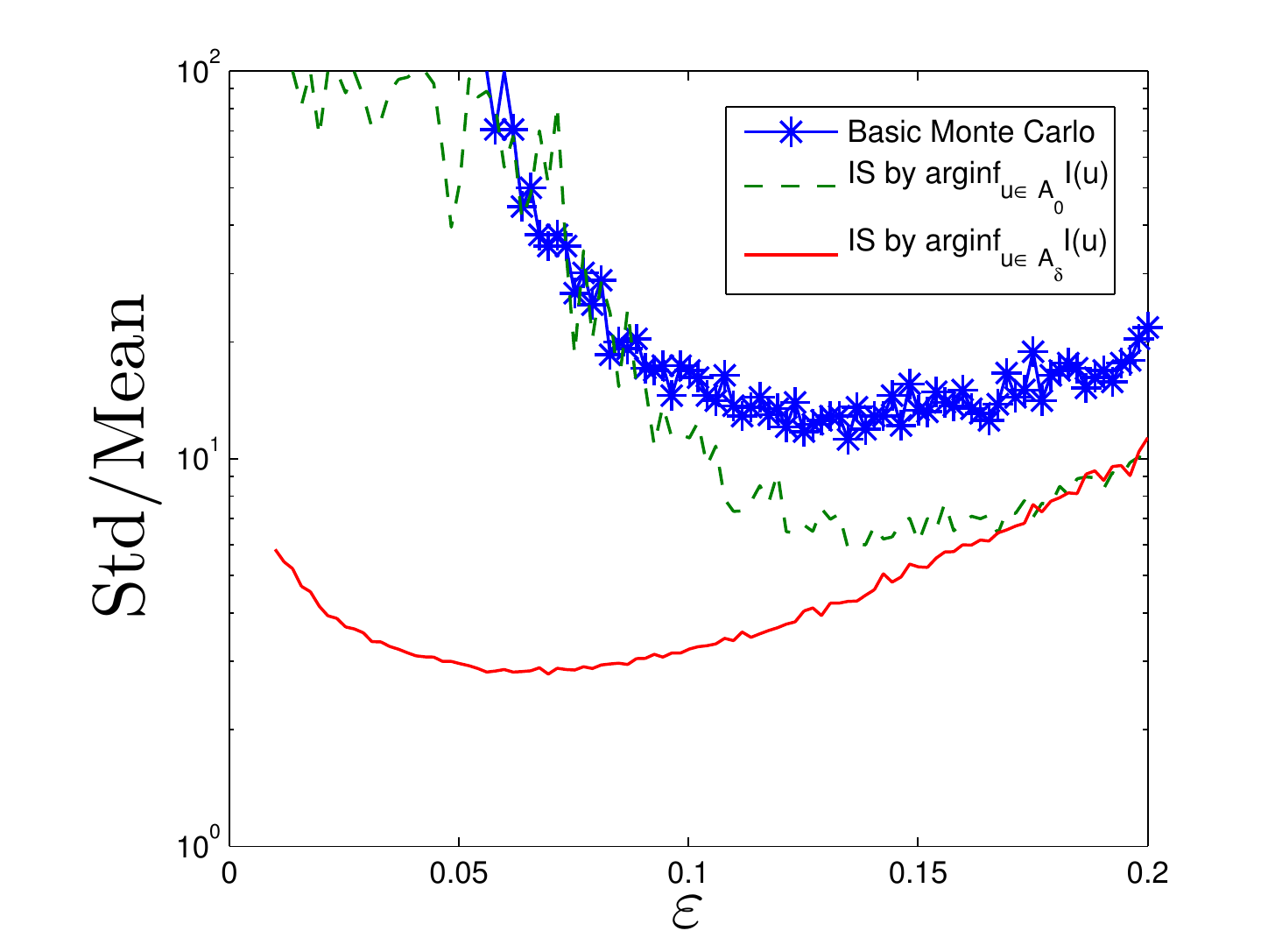}
	\caption{
	\label{fig:P(A_delta) and the relative error in the log scale}
	The semilog plots of the estimated probabilities (left) and the relative errors (right).}
\end{figure}
\section{Conclusion and open problems}

We have analyzed here the small probabilities of anomalous shock profile displacements
due to random perturbations using the theory of large deviations. We have obtained analytically upper
and lower bounds for the exponential rate of decay of these probabilities and we have verified the 
accuracy of these bounds with numerical simulations. We have also used Monte Carlo simulations with 
importance sampling based on the analytically known rate function, which is efficient and gives very good 
results for rare event probabilities.

%
%
%

\section*{Acknowledgment}

This work is partly supported by the Department of Energy [National Nuclear Security Administration] under 
Award Number NA28614, and partly by AFOSR grant FA9550-11-1-0266.

\appendix
\section{Proof of Lemma \ref{lem:HS}}	
\label{app:prooflemHS}%
Taking a Fourier transform in $x$ we have
$$
	\hat{Z}(t,\xi) = \int_0^t e^{-D \xi^2 (t-s)} d\hat{W}(s,\xi)  ,
$$
where $\hat{W}(t,\xi) = \int W(t,x) e^{-i\xi x} dx$ is a complex Gaussian process with mean zero and 
covariance
$$
	\EE \big[ \hat{W}(t,\xi) \overline{ \hat{W}(t',\xi')} \big] = t \wedge t' \, \hat{C}(\xi,\xi')   ,
$$
with $\hat{C}(\xi,\xi') = \int e^{-i\xi x +i\xi'x'} C(x,x') dx dx'$. We find that
$$
	\EE [ | \hat{Z}(t,\xi) |^2 ]  = \frac{1-e^{-2 D \xi^2 t}}{2D \xi^2}  \hat{C}(\xi,\xi)  .
$$
On the one hand, the fact that $\Phi$ is Hilbert-Schmidt from $L^2$ into $L^2$ implies that
$$
	\int \hat{C}(\xi,\xi) d\xi = 2\pi \|\Phi( \cdot, \cdot) \|_{L^2(\RR\times \RR)}^2 
$$
is finite. On the other hand we have $(1-e^{-s})/s \leq 1$ uniformly with respect to $s \in (0,\infty)$, so 
we get that for any $t \in [0,T]$
$$
	\int (1+\xi^2 ) \EE [ | \hat{Z}(t,\xi) |^2 ]d\xi \leq 2\pi  \big(T +\frac{1}{2D} \big) 
	\|\Phi( \cdot, \cdot) \|_{L^2(\RR\times \RR)}^2  , 
$$
which is equivalent by Parseval relation to
$$
	\EE [ \|{Z}(t) \|_{H^1}^2 ] \leq  \big(T +\frac{1}{2D} \big) 
	\|\Phi( \cdot, \cdot) \|_{L^2(\RR\times \RR)}^2  , 
$$
which gives that $Z(t) \in L^2( [0,T], H^1(\RR))$ almost surely. Similarly we find
\begin{eqnarray*}
	\EE \big[ |\hat{Z}(t,\xi) -\hat{Z}(t',\xi) |^2 \big]  
	&=& \big[ 1 - e^{-D \xi^2|t-t'|}\big] \big[ 2 - e^{- 2 D \xi^2 t \wedge t'} +  e^{-D \xi^2(t+t')}\big]  
	\frac{ \hat{C}(\xi,\xi) }{2D \xi^2} \\
	&\leq &  \big[ 1 - e^{-D \xi^2|t-t'|}\big] \frac{ \hat{C}(\xi,\xi) }{D \xi^2}
	\leq |t-t'| \hat{C}(\xi,\xi) ,
\end{eqnarray*}
which gives 
\begin{align*}
	\lefteqn{\EE \big[ \|Z(t) -Z(t') \|_{L^2}^4 \big] 
	= \iint \EE\big[ |Z(t,x)-Z(t',x) |^2  |Z(t,x')-Z(t',x') |^2 \big]  dx dx'}   \\
	&\leq  \iint \EE\big[ |Z(t,x)-Z(t',x) |^4\big]^{1/2} \EE\big[  |Z(t,x')-Z(t',x') |^4 \big]^{1/2} dx dx'\\
	&= 3 \iint  \EE\big[ |Z(t,x)-Z(t',x) |^2\big]  \EE\big[  |Z(t,x')-Z(t',x') |^2 \big]  dx dx'\\
	&=3 \Big( \int \EE\big[ |Z(t,x)-Z(t',x) |^2\big]    dx \Big)^2 
	\leq 3 (2\pi)^2  |t-t'|^2 \|\Phi( \cdot, \cdot) \|_{L^2(\RR\times \RR)}^4  ,
\end{align*}
where we have used the fact that ${Z}(t,x)-Z(t',x)$ is a Gaussian random variable and Parseval equality.
By Kolmogorov's continuity criterium for Hilbert valued stochastic processes \cite[Theorem 3.3]{DaPrato1992}
we find that that $Z(t)\in {\cal C}([0,T], L^2(\RR))$ almost surely.

If $\Phi$ is  Hilbert-Schmidt from $L^2$ into $H^1$, then
$$
	\int \xi^2 \hat{C}(\xi,\xi) d\xi = 2\pi \| \partial_x \Phi( \cdot, \cdot) \|_{L^2(\RR\times \RR)}^2 
$$
is finite, and we can repeat the same arguments to show the final result.
\section{Proofs in Section \ref{sec:LDP}}

\subsection{Proof of Propsition \ref{prop:space of u^eps}}
\label{pf:space of u^eps}

From Lemma \ref{lem:HS}, $Z(t)=\int_0^t S(t-s)dW(s)$ is in $\mathcal{C}([0,T],H^1(\RR))$ 
almost surely. We want to prove the existence and uniqueness in ${\cal E}^1$ of the solution to the equation 
\begin{equation}
	\label{eq:mild solution of one realization}
	u(t) = S(t)U - \int_0^t S(t-s) [F(u(s))]_x ds + \eps Z(t),
\end{equation}
where $F$ is a $\mathcal{C}^2$ function with $\max\{\|F'\|_{L^\infty}, \|F''\|_{L^\infty}\}\leq C_F <\infty$.

To prove the existence we use the Picard iteration scheme: we define $u^0(t)\equiv U$ and 
\begin{equation*}
	u^{n+1}(t) = S(t)U  - \int_0^t S(t-s) [F(u^n(s))]_x ds + \eps Z(t).
\end{equation*}
Therefore,
\begin{align}
	\label{eq:mild solution of ueps-U}
	u^{n+1}(t) - U &= S(t)U-U  - \int_0^t S(t-s) [F'(u^n(s))(u_x^n(s)-U_x)] ds\\
	&\quad - \int_0^t S(t-s) [F'(u^n(s))U_x] ds + \eps Z(t).\notag 
\end{align}
It is easy to see that $u^n(t) - U  \in \mathcal{C}([0,T],H^1(\RR))$ for all $n$, and we first show that 
$\sup_{t\in[0,T]}\|u^n(t)-U \|_{H^1(\RR)}$ are uniformly bounded in $n$.

\begin{lemma}
	\label{lma:bound for a_n}
	Let $a_n(t)=\|u^n(t)-U\|_{H^1}$. Then there exists a constant $C_u$ such that $a_n(t)\leq C_u$ for 
	all $n$ and $t\in[0,T]$. As a consequence, we can choose sufficiently large $C_u$ such that 
	$\|u^n_x(t)\|_{L^2}\leq C_u$ for all $n$ and $t\in[0,T]$.
\end{lemma}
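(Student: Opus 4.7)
The plan is to bound $\|u^{n+1}(t)-U\|_{H^1}$ by applying the triangle inequality to the four terms of (\ref{eq:mild solution of ueps-U}) and then to close the resulting Volterra-type recursion by a weighted-norm argument. First I would bound the three non-Picard-dependent terms uniformly on $[0,T]$: $\|S(t)U-U\|_{H^1}$ is bounded by a constant depending on $T$ and the $H^1$ norm of $U_x$ (which is finite because $U_x$ and $U_{xx}$ are in $L^2$, as one sees directly from the profile ODE $U_x=D^{-1}(F(U)-F(u_-)-\gamma(U-u_-))$ combined with the exponential decay of $U-u_\pm$ at $\pm\infty$); $\|\eps Z(t)\|_{H^1}$ is almost surely bounded on $[0,T]$ by Lemma \ref{lem:HS}; and for $\int_0^t S(t-s)[F'(u^n)U_x]\,ds$ I use Assumption \ref{asmp:saturated fluxes} together with the smoothing estimate $\|\partial_x S(\tau)g\|_{L^2}\leq C\tau^{-1/2}\|g\|_{L^2}$ to get an $H^1$-bound of order $C_F\|U_x\|_{L^2}(T+2C\sqrt{T})$. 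Lump all these into a single constant $K=K(T,U,\eps,\omega)$.

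For the remaining, Picard-dependent term, the same $L^\infty$ bound on $F'$ and the same smoothing estimate yield
\[
\Bigl\|\int_0^t S(t-s)[F'(u^n(s))(u^n_x(s)-U_x)]\,ds\Bigr\|_{H^1}\;\leq\;C_F\int_0^t\bigl(1+C(t-s)^{-1/2}\bigr)\,a_n(s)\,ds.
\]
Combining the four bounds gives the master inequality
\[
a_{n+1}(t)\;\leq\;K+C_F\int_0^t\bigl(1+C(t-s)^{-1/2}\bigr)\,a_n(s)\,ds,\qquad t\in[0,T].
\]

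To close this into a uniform bound in both $n$ and $t$, I would introduce the weighted norm $\|f\|_\lambda=\sup_{t\in[0,T]}e^{-\lambda t}|f(t)|$. Multiplying by $e^{-\lambda t}$ and estimating the integral gives $\|a_{n+1}\|_\lambda\leq K+C_F\|a_n\|_\lambda\int_0^\infty e^{-\lambda r}(1+Cr^{-1/2})\,dr=K+C_F(\lambda^{-1}+C\sqrt{\pi/\lambda})\|a_n\|_\lambda$. Choosing $\lambda$ large enough to make the prefactor less than $1/2$ produces the geometric recursion $\|a_{n+1}\|_\lambda\leq K+\tfrac12\|a_n\|_\lambda$, which iterates from $a_0\equiv 0$ to $\|a_n\|_\lambda\leq 2K$ for every $n$. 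Unweighting gives $a_n(t)\leq 2Ke^{\lambda T}$, and setting $C_u:=2Ke^{\lambda T}+\|U_x\|_{L^2}$ yields both conclusions of the lemma (the second following from $\|u^n_x\|_{L^2}\leq a_n+\|U_x\|_{L^2}$).

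The main obstacle is the singular kernel $(t-s)^{-1/2}$: a naive induction on $n$ with $\sup_t a_n(t)\leq M_n$ only closes when $C_F(T+2C\sqrt{T})<1$, i.e.\ for small $T$. The weighted-norm device is the standard way around this (equivalently, one could iterate the Volterra operator twice, exploiting $\int_r^t(t-s)^{-1/2}(s-r)^{-1/2}ds=\pi$ to produce a bounded kernel and then apply classical Gronwall). Either route is purely deterministic once we have fixed a realization of $Z$, so no further probabilistic input beyond $Z\in\mathcal{C}([0,T],H^1)$ a.s.\ is required.
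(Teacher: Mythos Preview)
Your proof is correct, but it proceeds along a different line from the paper's. The paper handles the singular kernel by applying H\"older's inequality with exponents $1<p<2$ and $q>2$ to split
\[
\int_0^t \|S(t-s)\|_{\mathcal{L}(L^2,H^1)}\,a_n(s)\,ds
\;\leq\;\Bigl(\int_0^t \|S(t-s)\|_{\mathcal{L}(L^2,H^1)}^p\,ds\Bigr)^{1/p}
\Bigl(\int_0^t a_n(s)^q\,ds\Bigr)^{1/q},
\]
so that the singular factor is integrated out once and for all. Raising to the $q$th power and using convexity of $x\mapsto x^q$ then yields the non-singular recursion $a_{n+1}^q(t)\leq C^q/2+(C^q/2)\int_0^t a_n^q(s)\,ds$, which iterates from $a_0\equiv 0$ to a partial exponential series bounded by $(C^q/2)\,e^{(C^q/2)T}$.

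Your Bielecki weighted-norm argument is an equally standard device and is somewhat more direct: it absorbs the singular kernel in a single step without the H\"older/convexity detour, and it makes the dependence of the final bound on $T$ more transparent (through the choice of $\lambda$). The paper's route has the minor advantage that after the H\"older step one is back in the world of ordinary linear Gronwall-type recursions, which may feel more familiar. Either method suffices here; your alternative remark about iterating the kernel twice (the beta-function trick) is a third, equivalent way to the same conclusion.
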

\begin{proof}
	Note that $\sup_{t\in[0,T]}\|S(t)U-U\|_{H^1}$ and $\sup_{t\in[0,T]}\|\eps Z(t)\|_{H^1}$ are 
	bounded. In addition, by \cite[Chap. 15, Sec. 1]{Taylor2011}, 
	$\|S(t-s)\|_{\mathcal{L}(L^2,H^1)}=\mathcal{O}((t-s)^{-1/2})$, then 
	\begin{align*}
		\left\|\int_0^t S(t-s) [F'(u^n(s))U_x] ds\right\|_{H^1}
		&\leq \int_0^t \|S(t-s)\|_{\mathcal{L}(L^2,H^1)} \|F'(u^n(s))U_x\|_{L^2} ds\\
		&\leq C_F \|U_x\|_{L^2} \int_0^t \|S(t-s)\|_{\mathcal{L}(L^2,H^1)}ds,
	\end{align*}
	are uniformly bounded in $n$, and
	\begin{align*}
		&\left\|\int_0^t S(t-s) [F'(u^n(s))(u_x^n(s)-U_x)] ds\right\|_{H^1}\\
		&\quad\leq\int_0^t\|S(t-s)\|_{\mathcal{L}(L^2,H^1)} \|F'(u^n(s))(u_x^n(s)-U_x)\|_{L^2}ds\\
		&\quad \leq C_F \int_0^t \|S(t-s)\|_{\mathcal{L}(L^2,H^1)} \|u_x^n(s)-U_x\|_{L^2} ds\\
		&\quad \leq C_F \int_0^t \|S(t-s)\|_{\mathcal{L}(L^2,H^1)} \|u^n(s)-U\|_{H^1} ds\\
		&\quad \leq C_F \left[\int_0^t \|S(t-s)\|_{\mathcal{L}(L^2,H^1)}^p ds\right]^{1/p}
		\left[\int_0^t\|u^n(s)-U\|_{H^1}^q ds\right]^{1/q}.
	\end{align*}
	By letting $1/p+1/q=1$ with $1<p<2$ and $q>2$, we can have the following recursive inequality with a 
	sufficiently large $C$:
	\[
		a_{n+1}(t) \leq \frac{C}{2} + \frac{C}{2} \left[\int_0^t a_n^q(s) ds\right]^{1/q}.
	\]
	By the convexity of $x\mapsto x^q$,
	\begin{equation}
		\label{eq:recursive inequality for a_n^q}
		a_{n+1}^q(t) \leq \left(\frac{C}{2} + \frac{C}{2} \left[\int_0^t a_n^q(s) ds\right]^{1/q}\right)^q
		\leq \frac{C^q}{2} + \frac{C^q}{2} \int_0^t a_n^q(s) ds.
	\end{equation}
	By noting that $a_0(t)=0$ and (\ref{eq:recursive inequality for a_n^q}), it easy to see that $a_n^q(t)$ 
	are uniformly bounded in $n$ and $t\in[0,T]$ and so are $a_n(t)$.
\end{proof}

To prove the convergence of $u^n(t)-U$ in $\mathcal{C}([0,T],H^1(\RR))$, it suffices to prove that 
$\sum_{n=0}^\infty \sup_{t\in[0,T]}\|u^{n+1}(t)-u^n(t)\|_{H^1} < \infty$.

\begin{lemma}
	\label{lma:existence of the mild solution with saturated fluxes}
	Let $b_n(t)=\|u^n(t)-u^{n-1}(t)\|_{H^1}$. Then $\sum_{n=0}^\infty \sup_{t\in[0,T]}b_n(t) < \infty$. 
	As a consequence, $u^n(t)-U$ converges to $u(t)-U$ in $\mathcal{C}([0,T],H^1(\RR))$ as $n\to\infty$ 
	and $u(t)$ solves (\ref{eq:mild solution of one realization}).
\end{lemma}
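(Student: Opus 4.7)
[Proof plan for Lemma \ref{lma:existence of the mild solution with saturated fluxes}]
The plan is to set up a Picard-type contraction on $H^1$ with a weakly singular kernel, then absorb the singularity by iterating twice and invoking a Beta-function identity.

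First, I would subtract consecutive iterates in the Picard scheme to get
\[
	u^{n+1}(t) - u^n(t) = -\int_0^t S(t-s)\bigl[F(u^n(s)) - F(u^{n-1}(s))\bigr]_x\, ds,
\]
and rewrite the integrand as
\[
	\bigl[F(u^n) - F(u^{n-1})\bigr]_x
	= F'(u^n)\bigl(u^n_x - u^{n-1}_x\bigr) + \bigl(F'(u^n) - F'(u^{n-1})\bigr)u^{n-1}_x.
\]
Using $\|F'\|_{L^\infty}, \|F''\|_{L^\infty} \leq C_F$ together with the 1D Sobolev embedding $H^1(\RR) \hookrightarrow L^\infty(\RR)$ and the uniform bound $\|u^{n-1}_x\|_{L^2} \leq C_u$ from Lemma \ref{lma:bound for a_n}, I get
\[
	\bigl\|\bigl[F(u^n)-F(u^{n-1})\bigr]_x\bigr\|_{L^2} \leq C_1 \|u^n - u^{n-1}\|_{H^1} = C_1 b_n(t),
\]
for a constant $C_1$ depending on $C_F$, $C_u$, and the Sobolev constant.

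Next, combining this with the semigroup estimate $\|S(t-s)\|_{\mathcal L(L^2,H^1)} = \mathcal O((t-s)^{-1/2})$ recalled in the proof of Lemma \ref{lma:bound for a_n}, I obtain the weakly singular inequality
\[
	b_{n+1}(t) \leq C_2 \int_0^t (t-s)^{-1/2} b_n(s)\, ds.
\]
A single application is not summable, but iterating twice and swapping the order of integration yields
\[
	b_{n+1}(t) \leq C_2^2 \int_0^t b_{n-1}(r) \!\!\int_r^t (t-s)^{-1/2}(s-r)^{-1/2}\, ds\, dr
	= C_2^2\, B(\tfrac12,\tfrac12) \int_0^t b_{n-1}(r)\, dr,
\]
where $B(\tfrac12,\tfrac12) = \pi$ is a Beta-function value independent of $r,t$.

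Setting $K = \pi C_2^2$ and iterating this smoothed recursion starting from $B_0 := \sup_{t\in[0,T]}(b_0(t) + b_1(t)) < \infty$ (finite by Lemma \ref{lma:bound for a_n}), a standard induction gives
\[
	b_n(t) \leq B_0\, \frac{(Kt)^{\lfloor n/2 \rfloor}}{\lfloor n/2 \rfloor!}, \qquad t\in[0,T],
\]
so $\sum_{n\geq 0} \sup_{t\in[0,T]} b_n(t) < \infty$. It follows that $\{u^n - U\}$ is Cauchy in $\mathcal C([0,T], H^1(\RR))$, hence converges to some limit $u - U$ in that space. Passing to the limit in the Picard recursion is then routine: continuity of $F$, the uniform $H^1$ bound of Lemma \ref{lma:bound for a_n}, and dominated convergence (with the same $(t-s)^{-1/2}$ kernel) show that the nonlinear term converges in $H^1$ to $\int_0^t S(t-s)[F(u(s))]_x\, ds$, so $u$ satisfies \eqref{eq:mild solution of one realization}. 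The main technical obstacle is the Beta-function smoothing step that turns the singular Volterra recursion into a summable one; everything else is bookkeeping with the bounds already established in Lemma \ref{lma:bound for a_n}.
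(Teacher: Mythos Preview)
Your argument is correct and shares the paper's setup verbatim through the estimate
\[
	b_{n+1}(t)\;\leq\; C\int_0^t \|S(t-s)\|_{\mathcal L(L^2,H^1)}\, b_n(s)\,ds,
\]
but diverges from the paper at the point where the weakly singular kernel $(t-s)^{-1/2}$ must be absorbed. The paper applies H\"older with conjugate exponents $1<p<2$, $q>2$ so that $\int_0^t \|S(t-s)\|_{\mathcal L(L^2,H^1)}^p\,ds$ is finite, obtains the recursion $b_{n+1}^q(t)\leq C\int_0^t b_n^q(s)\,ds$, and iterates to $b_n\leq b_0\,(C^nT^n/n!)^{1/q}$. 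You instead iterate the Volterra inequality twice and use the Beta identity $\int_r^t (t-s)^{-1/2}(s-r)^{-1/2}\,ds=\pi$ to smooth the kernel, arriving at $b_{n+1}(t)\leq K\int_0^t b_{n-1}(r)\,dr$ and then the factorial bound in $\lfloor n/2\rfloor$. Both routes are standard for weakly singular Gronwall--Volterra inequalities; your Beta-function approach avoids introducing the auxiliary exponent $q$ and keeps the estimate in $b_n$ rather than $b_n^q$, while the paper's H\"older step mirrors exactly the device already used in Lemma~\ref{lma:bound for a_n} and so costs nothing new. The limit passage you sketch at the end is the same as the paper's (which in fact omits it).
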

\begin{proof}
	By noting (\ref{eq:mild solution of one realization}), Lemma \ref{lma:bound for a_n} and 
	$\|\cdot\|_{L^\infty}\leq C_S\|\cdot\|_{H^1}$ (the Sobolev embedding), we have	
	\begin{align*}
		&\|u^{n+1}(t)-u^n(t)\|_{H^1}\\
		&\leq \int_0^t \|S(t-s)\|_{\mathcal{L}(L^2,H^1)} 
		\|F'(u^n(s))[u_x^n(s)-u_x^{n-1}(s)]\|_{L^2} ds\\
		&\quad+\int_0^t\|S(t-s)\|_{\mathcal{L}(L^2,H^1)}
		\|u_x^{n-1}(s)[F'(u^n(s))-F'(u^{n-1}(s))]\|_{L^2}ds\\
		&\leq C_F \int_0^t \|S(t-s)\|_{\mathcal{L}(L^2,H^1)} \|u^n(s)-u^{n-1}(s)\|_{H^1} ds\\
		&\quad+\int_0^t\|S(t-s)\|_{\mathcal{L}(L^2,H^1)}
		\|u_x^{n-1}(s)\|_{L^2}\|F'(u^n(s))-F'(u^{n-1}(s))\|_{L^\infty}ds\\
		&\leq C_F \int_0^t \|S(t-s)\|_{\mathcal{L}(L^2,H^1)} \|u^n(s)-u^{n-1}(s)\|_{H^1} ds\\
		&\quad+ C_F C_S C_u	\int_0^t\|S(t-s)\|_{\mathcal{L}(L^2,H^1)}\|u^n(s)-u^{n-1}(s)\|_{H^1}ds\\
		&\leq C_F(1+C_S C_u)\left[\int_0^t\|S(t-s)\|_{\mathcal{L}(L^2,H^1)}^p ds\right]^{1/p}
		\left[\int_0^t \|u^n(s)-u^{n-1}(s)\|_{H^1}^q ds\right]^{1/q},
	\end{align*}
	where $1/p+1/q=1$ with $1<p<2$ and $q>2$. Then there exists a constant $C$ such that 
	\[
		b_{n+1}^q(t) \leq C \int_0^t b_n^q(s) ds.
	\]
	Then $b_n^q(t)\leq b_0^q(t) C^nT^n/n!$ so $b_n(t)\leq b_0(t) (C^nT^n/n!)^{1/q}$ and it is easy to see 
	that $\sum_{n=0}^\infty \sup_{t\in[0,T]}b_n(t) < \infty$.
\end{proof}

Finally we show that (\ref{eq:mild solution of one realization}) has a unique solution in ${\cal E}^1$.
\begin{lemma}
	If $u , v \in {\cal E}^1$ solve 
	(\ref{eq:mild solution of one realization}), then $\sup_{t\in[0,T]}\|u(t)-v(t)\|_{H^1}=0$.
\end{lemma}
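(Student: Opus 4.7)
The plan is to set $w(t) = u(t) - v(t)$, observe that the deterministic part $S(t)U$ and the stochastic part $\eps Z(t)$ cancel in the mild formulation, and obtain the fixed-point identity
\begin{equation*}
  w(t) = -\int_0^t S(t-s)\,[F(u(s))-F(v(s))]_x\,ds,\qquad t\in[0,T].
\end{equation*}
First I would estimate the nonlinearity in $L^2$ by writing $[F(u)-F(v)]_x = F'(u)(u_x-v_x) + [F'(u)-F'(v)]v_x$. Using Assumption \ref{asmp:saturated fluxes} (so $\|F'\|_{L^\infty},\|F''\|_{L^\infty}\leq C_F$), together with the Sobolev embedding $\|\cdot\|_{L^\infty}\leq C_S\|\cdot\|_{H^1}$ and the fact that $v\in{\cal E}^1$ implies $\sup_{t\in[0,T]}\|v_x(t)\|_{L^2}<\infty$, I get
\begin{equation*}
  \|[F(u(s))-F(v(s))]_x\|_{L^2} \leq C_F(1+C_S\sup_{t\in[0,T]}\|v_x(t)\|_{L^2})\,\|w(s)\|_{H^1}.
\end{equation*}

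Next I would combine this with the smoothing estimate $\|S(t-s)\|_{\mathcal{L}(L^2,H^1)}=\mathcal{O}((t-s)^{-1/2})$ used already in Lemma \ref{lma:bound for a_n}, and apply Hölder's inequality with $1/p+1/q=1$, $1<p<2$, $q>2$, exactly as in Lemma \ref{lma:existence of the mild solution with saturated fluxes}. This gives
\begin{equation*}
  \|w(t)\|_{H^1} \leq C\left[\int_0^t (t-s)^{-p/2}\,ds\right]^{1/p}\left[\int_0^t \|w(s)\|_{H^1}^q\,ds\right]^{1/q},
\end{equation*}
where the first bracket is finite because $p/2<1$. Raising to the $q$-th power then yields a Gronwall-ready inequality
\begin{equation*}
  \|w(t)\|_{H^1}^q \leq \tilde{C}\int_0^t \|w(s)\|_{H^1}^q\,ds,
\end{equation*}
with $\tilde{C}$ a finite constant depending on $T$, $D$, $C_F$, $C_S$, and $\sup_{t\in[0,T]}\|v_x(t)\|_{L^2}$.

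Since $u,v\in{\cal E}^1$, the map $s\mapsto \|w(s)\|_{H^1}^q$ is continuous and hence bounded on $[0,T]$, so the standard Gronwall lemma forces $\|w(t)\|_{H^1}^q\equiv 0$, which is the claim. The only genuine difficulty is the singular kernel $(t-s)^{-1/2}$, which rules out a direct Gronwall argument on $\|w\|_{H^1}$; this is handled by the same Hölder-plus-$q$-th-power convexity trick already exploited in the existence proof, so no new technical ingredient is needed.
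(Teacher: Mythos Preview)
Your proof is correct and follows essentially the same route as the paper: the same decomposition $[F(u)-F(v)]_x = F'(u)(u_x-v_x)+[F'(u)-F'(v)]v_x$, the same $L^2\to H^1$ smoothing estimate for the heat semigroup, and a Gronwall conclusion. The only minor difference is that the paper applies Gronwall directly to the singular integral inequality $\|u(t)-v(t)\|_{H^1}\leq C\int_0^t (t-s)^{-1/2}\|u(s)-v(s)\|_{H^1}\,ds$ (a singular Gronwall lemma suffices here), whereas you insert the H\"older-plus-$q$th-power step from the existence proof to reduce to the standard Gronwall lemma; both are valid, and your remark that the singular kernel ``rules out'' a direct Gronwall argument is slightly too strong.
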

\begin{proof}
	Let $u, v \in {\cal E}^1$ solve (\ref{eq:mild solution of one realization}).
	Using the same calculations in the proof of Lemma 
	\ref{lma:existence of the mild solution with saturated fluxes}, we get
	\[
		\|u(t)-v(t)\|_{H^1}
		\leq C_F(1+C_S C_u) \int_0^t \|S(t-s)\|_{\mathcal{L}(L^2,H^1)} \|u(s)-v(s)\|_{H^1}ds.
	\]
	By noting that $\|u(0)-v(0)\|_{H^1}=0$, $\sup_{t\in[0,T]}\|u(t)-v(t)\|_{H^1}=0$ by Gronwall's 
	inequality.
\end{proof}


\subsection{Proof of Proposition \ref{prop:ldp}}
\label{pf:ldp}

The LDP can be obtained following the strategy of \cite{Gautier2005,deBouard2010}. The first step of the 
proof uses the LDP for the laws of the stochastic convolution $\eps Z$ on the space 
$\mathcal{C}([0,T],H^1(\RR))$, where
\[
	Z(t) =\int_0^t S(t-s) dW(s).
\]
The laws of $\eps Z$ are Gaussian measures and the LDP with a good rate function is a consequence of the 
general result on LDP for centered Gaussian measures on real Banach spaces \cite{Deuschel1989,Gautier2005}. 

The second step is to prove that the mapping $X(t) \mapsto u(t)-U$ is continuous from
$\mathcal{C}([0,T],H^1(\RR))$ into itself, where 
\begin{equation}
	\label{eq:mild solution driven by general force}
	u(t) = S(t)U - \int_0^t S(t-s) (F(u(s)))_x ds + \int_0^t S(t-s) X(s)ds.
\end{equation}
Then the LDP for $u^\eps$ is obtained by the contraction principle \cite{Dembo2010}.

To prove the continuity of the mapping, given two pairs $(X,u)$ and $(Y,v)$ satisfying 
(\ref{eq:mild solution driven by general force}), we show that $v-u\to 0$ as $Y\to X$ in 
$\mathcal{C}([0,T],H^1(\RR))$. By using essentially the same calculations in Lemma 
\ref{lma:existence of the mild solution with saturated fluxes},
\begin{align*}
	\|v(t)-u(t)\|_{H^1} 
	&\leq C_F(1+C_S C_u) \int_0^t \|S(t-s)\|_{\mathcal{L}(L^2,H^1)} \|v(s)-u(s)\|_{H^1} ds \\
	&\quad + \int_0^t \|S(t-s)\|_{\mathcal{L}(H^1,H^1)} \|Y(s)-X(s)\|_{H^1} ds,
\end{align*}
where $C_u=\sup_{t\in[0,T]}\|u_x(t)\|_{L^2}$. Noting that $\|S(t-s)\|_{\mathcal{L}(H^1,H^1)}=\mathcal{O}(1)$ 
and $\|S(t-s)\|_{\mathcal{L}(L^2,H^1)}=\mathcal{O}((t-s)^{-1/2})$, and by Gronwall's inequality, there 
exists a constant $C$ such that for $t\in[0,T]$
\begin{equation}
	\label{eq:bound for contiunity}
	\|v(t)-u(t)\|_{H^1} \leq C e^{C\int_0^T (T-s)^{-1/2}ds}\int_0^T \|Y(s)-X(s)\|_{H^1} ds.
\end{equation}
Because $\int_0^T (T-s)^{-1/2}ds<\infty$, $v-u\to 0$ as $Y\to X$ in $\mathcal{C}([0,T],H^1(\RR))$.


\subsection{Proof of Proposition \ref{prop:maximum principle}}
\label{pf:maximum principle}

For each $n$, we let $h^n\in L^2([0,T],L^2(\RR))$ such that $u=\mathcal{H}[h^n]$ and 
\[
	\frac{1}{2}\int_0^T\|h^n(t,\cdot)\|_{L^2}^2 dt-\frac{1}{n}
	< I(u)\leq \frac{1}{2}\int_0^T\|h^n(t,\cdot)\|_{L^2}^2 dt.
\]
Because $u\neq U(x-\gamma t)$, $I(u)>0$. We let $u^n$ be the mild solution of 
\begin{align*}
	&u^n_t + (F(u^n))_x = (Du^n_x)_x + (1-(nI(u))^{-1})^{1/2}\Phi h^n,\\
	&u^n(0,x)=U(x).
\end{align*}
Then $u^n-U\in\mathcal{C}([0,T],H^1(\RR))$ and 
\[
	I(u^n)\leq\frac{1}{2}\int_0^T(1-(nI(u))^{-1})\|h^n(t,\cdot)\|_{L^2}^2 dt
	\leq \frac{1}{2}\int_0^T\|h^n(t,\cdot)\|_{L^2}^2 dt - \frac{1}{n}
	<I(u).
\]

We show that $u^n - u\to 0$ in $\mathcal{C}([0,T],H^1(\RR))$. Let $\alpha^n=(1-(nI(u))^{-1})^{1/2}$ and 
replace $(X,u)$ and $(Y,v)$ by $(u,\Phi h^n)$ and $(u^n,\alpha^n\Phi h^n)$ respectively in 
(\ref{eq:bound for contiunity}). We have 
\begin{align*}
	\|u^n(t)-u(t)\|_{H^1} 
	&\leq C e^{C\int_0^T (T-s)^{-1/2}ds}\int_0^T (1-\alpha^n)
	\|\Phi\|_{\mathcal{L}(L^2,H^1)}\|h^n(s)\|_{L^2} ds\\
	&\leq 2C e^{C\int_0^T (T-s)^{-1/2}ds} (1-\alpha^n)
	\|\Phi\|_{\mathcal{L}(L^2,H^1)}\left(I(u)+\frac{1}{n}\right).
\end{align*}
Then $u^n - u\to 0$ in $\mathcal{C}([0,T],H^1(\RR))$ as $\alpha^n\to 1$.

\section{Proof of Proposition \ref{prop:1}}
\label{app:proofprop1}%

We use the following technical result:
If $f \in {\cal C} ([0,T], H^1(\RR))$, then $F(t) = \int_0^t S(t-s) \partial_x f (s)ds $
is such that $\int F(t,x) dx= 0$ for any $t \in[0,T]$.
Indeed, since $f(s)\in H^1$ and $H^1 \subset L^\infty$, we have by integrating by parts:
$$
F(t,x) =  -\int_0^t \frac{1}{\sqrt{2 \pi(t-s)}} \int \partial_y \big( e^{-\frac{(x-y)^2}{2(t-s)}} \big) f(s,y) dy ds  ,
$$
so that, for any $a<b$
$$
\int_a^b F(t,x) dx 
= \int_0^t \frac{1}{\sqrt{2 \pi(t-s)}} \int \big( e^{-\frac{(b-y)^2}{2(t-s)}} - e^{-\frac{(a-y)^2}{2(t-s)}} \big) f(s,y) dy ds   .
$$
Let $\delta >0$. There exists $M$ such that $\int_{[-M,M]^c} f(s,y)^2 dy < \delta^2$ for any $s \in [0,T]$.
 We can split the integral
in $y$ into two pieces. The integral over $[-M,M]^c$ can be bounded by Cauchy-Schwarz inequality so we get
$$
\big| \int_a^b F(t,x) dx \big|
=  \frac{\sqrt{2}}{\sqrt[4]{\pi}} \delta  
+ \int_0^t \frac{1}{\sqrt{2 \pi(t-s)}}
 \int_{-M}^M \big( e^{-\frac{(b-y)^2}{2(t-s)}} + e^{-\frac{(a-y)^2}{2(t-s)}} \big) f(s,y) dy ds  ,
$$
and therefore, by Lebesgue's theorem
$$
\limsup_{a \to -\infty , b\to +\infty}\big| \int_a^b F(t,x) dx \big| \leq \frac{\sqrt{2}}{\sqrt[4]{\pi}}  \delta 
.
$$
Since $\delta$ is arbitrary we get the desired  technical result.

We then find by integrating (\ref{eq:mild solution of ueps-U}) in $x$ that the center
of $u^\eps$ is  given by (\ref{center1}).
With the condition that $C$  is in $L^1(\RR\times \RR)$,  the last term is proportional to a Brownian motion.
\section{Proofs in Section \ref{sec:displacement_general}}

\subsection{Proof of Lemma \ref{lma:sifficient condition for Hilbert-Schmidt Phi}}
\label{pf:sifficient condition for Hilbert-Schmidt Phi}
The values of $\sigma$, $L_0$ and $l_c$ do not affect the result, so in the proof we 
set $\sigma=L_0=l_c=1$ and $\Phi^{l_c}_{L_0}=\Phi$ without loss of generality. $\Phi(x,x')$ is 
Hilbert-Schmidt from $L^2$ to $H^k$ if and only if $\partial_x^j\Phi(x,x')\in L^2(\RR\times\RR)$ for 
$0\leq j\leq k$. Taking the two-dimensional Fourier transform on 
$\partial_x^j\Phi(x,x')$, we have 
\begin{align*}
	\mathcal{F}_{\xi,\xi'}\{\partial_x^j\Phi(x,x')\}
	&= \mathcal{F}_\xi\{\mathcal{F}_{\xi'}\{\partial_x^j\Phi(x,x')\}\}
	 = \mathcal{F}_\xi\{\partial_x^j[\phi_0(x)\mathcal{F}_{\xi'}\{\phi_1(x-x')\}]\}\\
	&= \mathcal{F}_\xi\{\partial_x^j[\phi_0(x)e^{ix\xi'}\hat{\phi}_1(-\xi')\}]\}
	 = (i\xi)^j\hat{\phi}_1(-\xi')\mathcal{F}_\xi\{\phi_0(x)e^{ix\xi'}\}\\
	&= (i\xi)^j\hat{\phi}_1(-\xi')\hat{\phi}_0(\xi+\xi').
\end{align*}
By a simple calculation it is easy to see that 
$(i\xi)^j\hat{\phi}_1(-\xi')\hat{\phi}_0(\xi+\xi') \in L^2(\RR\times\RR)$ if 
$\phi_0$ and $\phi_1$ are both in $H^j(\RR)$.

\subsection{Proof of Lemma \ref{lma:lower bound of J(A_delta)}}
\label{pf:lower bound of J(A_delta)}
It suffices to show the case that $\delta_n=1/n$. For each $n$, let $u_n\in A_{1/n}$, such that 
$\mathcal{J}(A_{1/n})\leq I(u^n)<\mathcal{J}(A_{1/n})+1/n$; $\{I(u^n)\}$ are bounded from above by 
$\mathcal{J}(A_1)+1<\infty$. Because $I$ is a good rate function and compactness is equivalent to 
sequentially compactness in  $\mathcal{E}^1$, $\{u^n\}$ has a convergent subsequence $\{u^{n_l}\}$ whose 
limit $u^*$ is in $A$. As $I$ is lower semicontinuous, then
\[
	\mathcal{J}(A) \geq \lim_n\mathcal{J}(A_{1/n}) 
	= \lim_l I(u^{n_l}) = \liminf_l I(u^{n_l}) 
	\geq I(u^*) \geq \mathcal{J}(A).
\]

\subsection{Proof of Lemma \ref{lma:approximation of phi_0 for small |x_0|}}
\label{pf:approximation of phi_0 for small |x_0|}

Because $\phi_0(x)\equiv 1$ on $x\in(-1,1)$ by Assumption \ref{asmp:conditions of Phi},
\begin{align*}
	&\|[\phi_0^{-1}(\cdot/L_0)-1]U_x(\cdot-(t/T)(\gamma T+x_0))\|_{L^2}^2\\
	&\quad =\int_{-\infty}^{-L_0}+\int_{L_0}^{\infty}
	\{[\phi_0^{-1}(x/L_0)-1]U_x(x-(t/T)(\gamma T+x_0))\}^2 dx\\
	&\quad =\int_{-\infty}^{-L_0-\frac{t}{T}(\gamma T+x_0)}+\int_{L_0-\frac{t}{T}(\gamma T+x_0)}^{\infty}
	\{[\phi_0^{-1}(x/L_0+\frac{t}{T}(\gamma T+x_0)/L_0)-1]U_x(x)\}^2 dx.
\end{align*}
Let $L_0=\gamma T+x_0+C_v$ with $C_v\geq 1$. Because $\phi_0^{-1}(x)\geq 1$ is monotonically increasing 
for $x\geq 0$ and $C_v \geq 1$,
\[
	[\phi_0^{-1}(x/L_0+\frac{t}{T}(\gamma T+x_0)/L_0)-1]^2 \leq [\phi_0^{-1}(x+1)-1]^2,\quad x \geq 0.
\]
Similarly, because $\phi_0^{-1}(x)\geq 1$ is monotonically decreasing for $x\leq 1$,
\[
	[\phi_0^{-1}(x/L_0+\frac{t}{T}(\gamma T+x_0)/L_0)-1]^2 \leq [\phi_0^{-1}(x)-1]^2,\quad x\leq 0.
\] 
In addition, $C_v\leq L_0-\frac{t}{T}(\gamma T+x_0)$ and $-C_v\geq-L_0-\frac{t}{T}(\gamma T+x_0)$. 
Therefore,
\begin{align*}
	&\|[\phi_0^{-1}(\cdot/L_0)-1]U_x(\cdot-(t/T)(\gamma T+x_0))\|_{L^2}^2\\
	&\quad \leq \int_{-\infty}^{-C_v} \{[\phi_0^{-1}(x)-1]U_x(x)\}^2 dx
	+ \int_{C_v}^{\infty} \{[\phi_0^{-1}(x+1)-1]U_x(x)\}^2 dx.
\end{align*}
By noting that $\phi_0^{-1}(x)$ has at most polynomial growth, there exists a uniform $C_v\geq 1$ for 
(\ref{eq:L^2 approximation of phi_0 for small |x_0|}).

\subsection{Proof of Lemma \ref{lma:approximation of phi_1 for small |x_0|}}
\label{pf:approximation of phi_1 for small |x_0|}
It is easy to check from the properties of the traveling wave $U$ that 
$v_t+F(v)_x-Dv_{xx}\in\mathcal{C}([0,T],\mathcal{S}(\RR))$. By (\ref{eq:covariance kernel Phi}) and 
(\ref{eq:driving force expression of v}), we have
\[
	v_t+F(v)_x-Dv_{xx} = \sigma \phi_0\Big(\frac{x}{L_0}\Big)
	\left[\frac{1}{l_c}\phi_1\Big(\frac{x}{l_c}\Big)*h^{l_c}_{L_0}(t,x) \right].
\]
Taking the Fourier transform on the both sides,
\[
	\hat{h}^{l_c}_{L_0}(t,\xi) = \sigma^{-1} (\hat{\phi}_1(l_c\xi))^{-1}
	\mathcal{F}_\xi\{(v_t+F(v)_x-Dv_{xx})/\phi_0(\cdot/L_0)\}.
\]
Because $v_t+F(v)_x-Dv_{xx}\in\mathcal{S}(\RR)$ and $1/\phi_0(\cdot/L_0)\in\mathcal{C}^\infty$ has at most 
polynomial growth, $\mathcal{F}_\xi\{(v_t+F(v)_x-Dv_{xx})/\phi_0(\cdot/L_0)\}$ is well-defined and also
in $\mathcal{C}([0,T],\mathcal{S}(\RR))$. In addition, $1/|\hat{\phi}_1(l_c\xi)|$ also has at most 
polynomial growth and then indeed $h^{l_c}_{L_0}(t,\xi)\in L^2(\RR)$.

Because $\mathcal{F}_\xi\{(v_t+F(v)_x-Dv_{xx})/\phi_0(\cdot/L_0)\}$ is in 
$\mathcal{C}([0,T],\mathcal{S}(\RR))$ and $1/|\hat{\phi}_1(\xi)|$ has at most polynomial growth,
\begin{align*}
	\|h^{l_c}_{L_0}(t,\cdot)\|_{L^2}^2 
	&\to \frac{1}{2\pi\sigma^2} \int |\hat{\phi}_1(0)|^{-2} 
	|\mathcal{F}_\xi\{(v_t+F(v)_x-Dv_{xx})/\phi_0(\cdot/L_0)\}|^2 d\xi\\
	&=\frac{1}{\sigma^2} \|(v_t+F(v)_x-Dv_{xx})/\phi_0(\cdot/L_0)\|_{L^2}^2.
\end{align*}
as $l_c\to 0$, uniformly in $t\in[0,T]$. Then we have 
(\ref{eq:L^2 approximation of phi_1 for small |x_0|}).

\subsection{Proof of Proposition \ref{prop:general form of lower bounds}}
\label{pf:general form of lower bounds}
For any pair $(u,h)$ satisfying (\ref{eq:control1a}),
\begin{align*}
	\|h(t,\cdot)\|_{L^2}^2
	&=\sup_{f,f\neq 0}\frac{\langle h(t,\cdot),f\rangle^2}{\langle f,f\rangle}
	\geq\frac{\langle h(t,\cdot),(\Phi^{l_c}_{L_0})^T 1\rangle^2}
	{\langle (\Phi^{l_c}_{L_0})^T 1,(\Phi^{l_c}_{L_0})^T 1\rangle}\\
	&= \|(\Phi^{l_c}_{L_0})^T 1\|_{L^2}^{-2}\langle \Phi^{l_c}_{L_0} h(t,\cdot),1\rangle^2
	= \|(\Phi^{l_c}_{L_0})^T 1\|_{L^2}^{-2} \langle u_t+F(u)_x-Du_{xx},1\rangle^2.
\end{align*}
Because $\Phi^{l_c}_{L_0} h \in {\cal C}([0,T],H^1(\RR))$, we have $u -U\in {\cal C}([0,T],H^1(\RR))$. Therefore 
$\lim_{x\to\pm\infty}u(t,x)=u_{\pm}$ and $\lim_{x\to\pm\infty}u_x(t,x)=0$ for all $t \in [0,T]$. Thus,
\begin{align*}
	I(u) &\geq \frac{1}{2}\|(\Phi^{l_c}_{L_0})^T 1\|_{L^2}^{-2} 
	\int_0^T \langle u_t+F(u)_x-Du_{xx},1\rangle^2 dt\\
	&= \frac{1}{2}\|(\Phi^{l_c}_{L_0})^T 1\|_{L^2}^{-2} 
	\int_0^T\left(\int u_t(t,x) dx+F(u_+)-F(u_-)\right)^2 dt.
\end{align*}
By the Rankine-Hugoniot condition (\ref{eq:Rankine-Hugoniot})
\begin{align*}
	I(u) &\geq \frac{1}{2}\|(\Phi^{l_c}_{L_0})^T 1\|_{L^2}^{-2} 
	\int_0^T\left(\int u_t (t,x)dx+\gamma(u_+ - u_-)\right)^2 dt\\
	&= \frac{1}{2}\|(\Phi^{l_c}_{L_0})^T 1\|_{L^2}^{-2}
	\int_0^T\left(\int[u_t(t,x)-\frac{d}{dt}U(x-\gamma t)]dx\right)^2 dt\\
	&= \frac{1}{2}\|(\Phi^{l_c}_{L_0})^T 1\|_{L^2}^{-2}
	\int_0^T\left(\frac{d}{dt}\int[u(t,x)-U(x-\gamma t)]dx\right)^2 dt.
\end{align*}
By the Schwarz inequality and noting that $u\in A$,
\begin{align*}
	I(u) &\geq \frac{1}{2}T^{-1}\|(\Phi^{l_c}_{L_0})^T 1\|_{L^2}^{-2}
	\left(\int_0^T\frac{d}{dt}\int[u(t,x)-U(x-\gamma t)]dx dt\right)^2\\
	&= \frac{1}{2}T^{-1}\|(\Phi^{l_c}_{L_0})^T 1\|_{L^2}^{-2} 
	\left(\int[u(T,x)-U(x-\gamma T)]dx\right)^2\\
	&= \frac{1}{2}T^{-1}\|(\Phi^{l_c}_{L_0})^T 1\|_{L^2}^{-2} 
	\left(\int[U(x-\gamma T-x_0)-U(x-\gamma T)]dx\right)^2\\
	&= \frac{1}{2}T^{-1}\|(\Phi^{l_c}_{L_0})^T 1\|_{L^2}^{-2}\left(\int[U(x-x_0)-U(x)]dx\right)^2.
\end{align*}

\subsection{Proof of Lemma \ref{lma:Phi^T 1}}
\label{pf:Phi^T 1}
We first compute $(\Phi^{l_c}_{L_0})^T$. For any test functions $f$ and $g$,
\begin{align*}
	\langle \Phi^{l_c}_{L_0} f,g \rangle 
	&= \int\sigma\phi_0(\frac{x}{L_0}) \int\frac{1}{l_c}\phi_1(\frac{x-x'}{l_c})f(x')dx'g(x)dx\\
	&= \int f(x') \int\sigma\phi_0(\frac{x}{L_0})\frac{1}{l_c}\phi_1(\frac{x-x'}{l_c})g(x)dxdx'
	= \langle f,(\Phi^{l_c}_{L_0})^T g \rangle.
\end{align*}
Thus $(\Phi^{l_c}_{L_0})^T g(x) = \sigma[\phi_0(\frac{x}{L_0})g(x)]*\frac{1}{l_c}\phi_1(-\frac{x}{l_c})$ and 
$(\Phi^{l_c}_{L_0})^T 1(x) = \sigma\phi_0(\frac{x}{L_0})*\frac{1}{l_c}\phi_1(-\frac{x}{l_c})$. Then 
\begin{align*}
	\|(\Phi^{l_c}_{L_0})^T 1\|_{L^2}^2 
	&= \frac{1}{2\pi}\sigma^2 L_0^2 \int \hat{\phi}_0^2(L_0\xi) \hat{\phi}_1^2(-l_c\xi)d\xi \\
	& \overset{l_c\to 0}{\to} \frac{1}{2\pi}\sigma^2 L_0^2 \int \hat{\phi}_0^2(L_0\xi) \hat{\phi}_1^2(0)d\xi
	= \sigma^2 L_0 \|\phi_0\|_{L^2}^2.
\end{align*}

\subsection{Proof of Lemma \ref{lma:approximation of phi_0 for large |x_0|}}
\label{pf:approximation of phi_0 for large |x_0|}
The proof for $F(w)_x -Dw_{xx}$ is similar to the proof of Lemma 
\ref{lma:approximation of phi_0 for small |x_0|} so we skip it. For $U(x-\gamma T-x_0)-U(x)$, because 
$\phi_0(x)\equiv 1$ on $x\in(-1,1)$ by Assumption \ref{asmp:conditions of Phi},
\begin{align*}
	\lefteqn{\|[\phi_0^{-1}(\cdot/L_0)-1][U(\cdot-\gamma T-x_0)-U]\|_{L^2}^2}\\
	&=\int_{-\infty}^{-L_0}+\int_{L_0}^\infty\{[\phi_0^{-1}(x/L_0)-1][U(x-\gamma T-x_0)-U(x)]\}^2 dx\\
	&\leq\int_{-\infty}^{-L_0}\{[\phi_0^{-1}(x/L_0)-1][u_- - U(x)]\}^2 dx\\
	&\quad + \int_{L_0}^\infty\{[\phi_0^{-1}(x/L_0)-1][U(x-\gamma T-x_0)-u_+]\}^2 dx\\
	&= \int_{-\infty}^{-L_0}\{[\phi_0^{-1}(x/L_0)-1][u_- - U(x)]\}^2 dx\\
	&\quad + \int_{L_0-\gamma T-x_0}^\infty\{[\phi_0^{-1}(x/L_0+(\gamma T+x_0)/L_0)-1][U(x)-u_+]\}^2 dx\\
	&\leq \int_{-\infty}^{-C_w}\{[\phi_0^{-1}(x)-1][u_- - U(x)]\}^2 dx
	+ \int_{C_w}^\infty\{[\phi_0^{-1}(x+1)-1][U(x)-u_+]\}^2 dx.
\end{align*}
The last inequality holds because $\phi_0^{-1}(x)\geq 1$ is increasing for $x\geq 0$, decreasing for 
$x<0$ and $L_0\geq 1$. Then we can find a uniform $C_w\geq 1$.

\bibliographystyle{siam}
\bibliography{reference}
\end{document}